\newcommand{\pleinepage}%
{\setlength{\oddsidemargin}{0in}\setlength{\textwidth}{6.26in}\setlength{\topmargin}{0in}\setlength{\textheight}{8.7in}}
\newcommand{\grossepage}%
{\setlength{\oddsidemargin}{-0.5cm}\setlength{\textwidth}{17.5cm}\setlength{\topmargin}{-1.5cm}\setlength{\textheight}{24cm}}
\newcommand{\defit}[1]%
{{\em #1}}
\def\Re{\mathop{\plainRe\mkern -2mu\mit e}\nolimits}
\def\Im{\mathop{\plainIm\mkern -2mu\mit m}\nolimits}
\def\surl#1_#2{\mathrel{\mathop{\kern 0pt #1}\limits_{#2}}}
\newcommand{\fleche}[1]%
{\rTo^{#1}}
\newcommand{\fonction}[5]%
{\begin{diagram}
#2 & {} &\rTo^{#1} & {} & #3 \\
#4 & {} &\rMapsto & {} & #5 
\end{diagram} }
\newcommand{\sfonction}[5]%
{$\begin{array}{ccc}#2 & {\buildrel #1 \over \rightarrow} & #3 \\#4 & \mapsto & #5 \\ \end{array}$ }
\newcommand{\accolade}[1]%
{\begin{cases}  #1 \end{cases}}
\newcounter{nbre}
\newcommand{\entete}[6]%
{{\large \noindent%
\mbox{\begin{tabular}{c} #1 \\ #2 \end{tabular}}\hspace{\fill}\mbox{\begin{tabular}{c} #3 \\ #4 \end{tabular}}\vspace{1cm}\begin{center}{\Huge \textsc{#5}} \\ \vspace{0.5cm}\begin{tabular}{c} #6 \\ \hline \end{tabular}\end{center}\bigskip}}
\newcommand{\defifont}{ \sc }
\par \vspace{0.3cm}\ \\ \noindent {{  \textsc{\textbf{Definitions}}} 
\sffamily\renewcommand{\em}{\normalfont\itshape}}{\par
\newcommand{\gloss}[1]%
 {\index{{#1}@{#1}}{\em #1}\relax}
\newcommand{\xgloss}[2]%
 {\index{{#1}@{#1}!{#2}@{#2}}{\em #1\relax #2}\relax}
 \newcommand{\glossref}[2]%
 {\index{{#2}@{#1}}{\em #1}\relax}
 \newcommand{\xglossref}[4]%
 {\index{{#3}@{#1}!{#4}@{#2}}{\em #1 \relax #2 }\relax}
\newcounter{compteur}
\renewcommand{\thecompteur}{\thesection.\arabic{compteur}}
\newenvironment{dfn}[1][]%
{\refstepcounter{compteur} \par \vspace{0.3cm}\ \\ \noindent {{  \textsc{\textbf{Definition}}} 
    \textbf{\thecompteur} \ 
    }---\ \sffamily\renewcommand{\em}{\normalfont\itshape}}{\par
    \vspace{0.3 cm}}
{\refstepcounter{compteur} \par \vspace{0.3cm}\ \\ \noindent {{  \textsc{\textbf{Definitions}}} 
    \thecompteur \ 
    }---\ \sffamily\renewcommand{\em}{\normalfont\itshape}\begin{enumerate}}{\end{enumerate}
    \par \vspace{0.5 cm}}
\newcounter{theonum}\setcounter{theonum}{0}
\newenvironment{thm}[1][]%
{\refstepcounter{theonum} \par \vspace{0.3cm}\ \\ \noindent {{  \textsc{\textbf{Theorem}}} 
    \textbf{\thecompteur} \ 
    }---\ \sffamily\renewcommand{\em}{\normalfont\itshape}}{\par
    \vspace{0.3 cm}}
\newcommand\addpage[2]{#2, page #1}
\renewcommand\p@theonum{\protect\addpage{\thepage}}
\newenvironment{prop}[1][]%
{\refstepcounter{compteur} \par \vspace{0.2cm}\ \\ \noindent {{  \textsc{\textbf{Proposition}}} 
    \textbf{\thecompteur} \ 
    }---\ \sffamily\renewcommand{\em}{\normalfont\itshape}}{\par
    \vspace{0.2 cm}}
\newenvironment{prop*}[1][]%
{ \par \vspace{0.2cm}\ \\ \noindent {{\textsc{\textbf{Proposition}}} 
    #1 \ 
    }---\sffamily\renewcommand{\em}{\normalfont\itshape}}{\par \vspace{0.3 cm}}
{\refstepcounter{compteur} \par \vspace{0.3cm}\ \\ \noindent {{  \textsc{\textbf{Properties}}} 
    \thecompteur \ 
    }---\ \sffamily\renewcommand{\em}{\normalfont\itshape}}{\par
    \vspace{0.3 cm}}
\newenvironment{cor}[1][]%
{\refstepcounter{compteur} \par \vspace{0.3cm}\ \\ \noindent {{ \textsc{\textbf{Corollary}}}
    \textbf{\thecompteur} 
    \ }---\ \sffamily\renewcommand{\em}{\normalfont\itshape}}{\par  \vspace{0.3 cm}}
\newenvironment{lem}[1][]%
{\refstepcounter{compteur} \par \vspace{0.2cm}\ \\ \noindent {{  \textsc{\textbf{Lemma}}} 
    \textbf{\thecompteur} \ 
    }---\ \sffamily\renewcommand{\em}{\normalfont\itshape}}{\par
    \vspace{0.2 cm}}    
\renewenvironment{proof}%
{\par \vspace{0.2cm}\ \\ \noindent{ { \textsc{Proof}}\,---\ } }{\hfill{$\Box$} \par \vspace{0.2 cm}}
\par \vspace{0.2cm}\ \\ \noindent{\sc \textbf{Remark}\ }---\ }{\par \vspace{0.2cm}}
\par \vspace{0.2cm}\ \\ \noindent{\sc \textbf{Conjecture} \textbf{\thecompteur}\ }---\ }{\par \vspace{0.2cm}}
\begin{document}

\NoCompileMatrices
\def\ds{\displaystyle}
\def\pn{\pi_{n}}
\def\pnu{\pi_{n-1}}
\renewcommand{\sectionmark}[1]{\markright{\thesection\ #1}}
\fancyhf{} 
\fancyhead[LE,RO]{\;\thepage}


\renewcommand{\headrulewidth}{0.16pt}
\renewcommand{\footrulewidth}{0pt}
\addtolength{\headheight}{0.7pt} 
\fancypagestyle{plain}{%
\fancyhead{} 
\renewcommand{\headrulewidth}{0pt} 
}

\newcommand{\foot}[1]{\footnote{\begin{normalsize}#1\end{normalsize}}}


\def\bX{\partial X}
\def\dim{\mathop{\rm dim}}
\def\Re{\mathop{\rm Re}}
\def\Im{\mathop{\rm Im}}
\def\I{\mathop{\rm I}}
\def\Id{\mathop{\rm Id}}
\def\grad{\mathop{\rm grad}}
\def\vol{\mathop{\rm vol}}
\def\SU{\mathop{\rm SU}}
\def\SO{\mathop{\rm SO}}
\def\Aut{\mathop{\rm Aut}}
\def\End{\mathop{\rm End}}
\def\GL{\mathop{\rm GL}}
\def\Cinf{\mathop{\mathcal C^{\infty}}}
\def\Ker{\mathop{\rm Ker}}
\def\Coker{\mathop{\rm Coker}}
\def\dom{\mathop{\rm Dom}}
\def\Hom{\mathop{\rm Hom}}
\def\Ch{\mathop{\rm Ch}}
\def\sign{\mathop{\rm sign}}
\def\SF{\mathop{\rm SF}}
\def\loc{\mathop{\rm loc}}
\def\AS{\mathop{\rm AS}}
\def\spec{\mathop{\rm spec}}
\def\Ric{\mathop{\rm Ric}}
\def\ch{\mathop{\rm ch}}
\def\Ch{\mathop{\rm Ch}}

\def\ev{\mathop{\rm ev}}
\def\id\textrm{Id}
\def\dd{\mathcal{D}(d)}
\def\Cli{\mathbb{C}l(1)}
\def\kerd{\operatorname{ker}(d)}

\def\Fi{\Phi}

\def\de{\delta}
\def \dl{\partial L_x^0}
\def\e{\eta}
\def\ep{\epsilon}
\def\ro{\rho}
\def\a{\alpha}
\def\o{\omega}
\def\O{\Omega}
\def\b{\beta}
\def\la{\lambda}
\def\th{\theta}
\def\s{\sigma}
\def\t{\tau}
\def\g{\gamma}
\def\D{\Delta}
\def\G{\Gamma}
\def \fol{\mathcal F}
\def\R{\mathbin{\mathbb R}}
\def\Rn{\R^{n}}
\def\C{\mathbb{C}}
\def\Cm{\mathbb{C}^{m}}
\def\Cn{\mathbb{C}^{n}}
\def\gr{\mathcal{G}}
\def\Kahler{{K\"ahler}}
\def\w{{\mathchoice{\,{\scriptstyle\wedge}\,}{{\scriptstyle\wedge}}
{{\scriptscriptstyle\wedge}}{{\scriptscriptstyle\wedge}}}}
\def\cA{{\cal A}}\def\cL{{\cal L}}
\def\cO{{\cal O}}\def\cT{{\cal T}}\def\cU{{\cal U}}
\def\cD{{\cal D}}\def\cF{{\cal F}}\def\cP{{\cal P}}\def\cH{{\cal H}}\def\cL{{\cal L}}
\def\cB{{\cal B}}


\newcommand{\n}[1]{\left\| #1\right\|}

\def\Z{\mathbb{Z}}
\def\cgs{C^{*}(\Gamma,\sigma)}
\def\bcgs{C^{*}(\Gamma,\bar{\sigma})}
\def\cgsr{C^{*}_{red}(,\sigma)}
\def\Mt{\tilde{M}}
\def\Et{\tilde{E}}
\def\Vt{\tilde{V}}
\def\Xt{\tilde{X}}
\def\N{\mathbb{N}}
\def\Nbs{\N^{\bar{\s}}}
\def\rcab{\ro^{[c]}_{\a-\b}}
\def\rc{\ro^{[c]}}
\def\Cd{\mathbb{C}^{d}}
\def\tr{\mathop{\rm tr}}\def\tralg{\tr{}^{\text{alg}}}       

\def\TR{\mathop{\rm TR}}\def\trace{\mathop{\rm trace}}
\def\STR{\mathop{\rm STR}}
\def\trG{\mathop{\rm tr_\Gamma}}
\def\TRG{\mathop{\rm TR_\Gamma}}
\def\Tr{\mathop{\rm Tr}}
\def\Str{\mathop{\rm Str}}
\def\Cl{\mathop{\rm Cl}}
\def\Op{\mathop{\rm Op}}
\def\supp{\mathop{\rm supp}}
\def\scal{\mathop{\rm scal}}
\def\ind{\mathop{\rm ind}}
\def\Ind{\mathop{\mathcal I\rm nd}\,}
\def\Diff{\mathop{\rm Diff}}
\def\T{\mathcal{T}}
\def\dn{\textrm{dim}_{\Lambda}}
\def \lke{\textrm L^2-\textrm{Ker}}


\newcommand{\wt}{\widetilde}
\newcommand{\go}{\mathcal{G}^{0}}
\newcommand{\dii}{(d_x^{k-1})^\ast}
\newcommand{\di}{d_x^{k-1}}
\newcommand{\ra}{\operatorname{range}}
\newcommand{\rb}{\rangle}
\newcommand{\lb}{\langle}
\newcommand{\re}{\mathcal{R}}
\newcommand{\vo}{\operatorname{End}_{\Lambda}(E)}
\newcommand{\mt}{\mu_{\Lambda,T}}
\newcommand{\tru}{\operatorname{tr}_{\Lambda}}
\newcommand{\buno}{B^1_{\Lambda}(E)}
\newcommand{\bdue}{B^2_{\Lambda}(E)}
\newcommand{\clis}{H^{2k}_{(2),dR}(L_x^0)}
\newcommand{\cali}{L^2(\Omega^{2k}(\partial L_x^0))}
\newcommand{\binf}{B^{\infty}_{\Lambda}(E)}
\newcommand{\bif}{B^{f}_{\Lambda}(E)}
\newcommand{\vn}{\operatorname{End}_{\mathcal{R}}}
\newcommand{\ho}{\operatorname{Hom}_{\Lambda}}
\newcommand{\spc}{\operatorname{spec}_{\Lambda,e}}
\newcommand{\ix}{\operatorname{Ind}_{\Lambda}}
\newcommand{\cic}{C^{\infty}_c(L_x;E_{|L_x})}
\newcommand{\ci}{C^{\infty}_c(L_x;E_{|L_x})}
\newcommand{\tx}{\{T_x\}_{x\in X}}
\newcommand{\cc}{C^{\infty}_c(X)}
\newcommand{\rom}{\underline{\mathcal{R}_0}}
\newcommand{\roma}{(\mathcal{R}_0)_{|\partial X_0}     }
\newcommand{\dfo}{D^{\mathcal{F}_{\partial}}}
\newcommand{\deu}{D_{\epsilon,u}}
\newcommand{\deupp}{D^{+}_{\epsilon,u}}
\newcommand{\deum}{D^{-}_{\epsilon,u}}
\newcommand{\deuf}{D_{\epsilon,u}^{\mathcal{F}_{\partial}}}
\newcommand{\deufo}{D_{\epsilon,u,x_0}^{\mathcal{F}_{\partial}}}
\newcommand{\pie}{\Pi_{\epsilon}}
\newcommand{\pal}{\partial L_x}
\newcommand{\pr}{\partial_r}
\newcommand{\inbl}{\int_{\partial L_x} }
\newcommand{\pkp}{\chi_{\{0\}}(D^+_x)}
\newcommand{\deup}{D_{\epsilon,x}^{\pm}}
\newcommand{\dext}{D_{\epsilon,\mp u,x}^{\pm}}
\newcommand{\dex}{D_{\epsilon,\pm u,x}^{\pm}}
\newcommand{\ext}{\operatorname{Ext}(D_{\epsilon,x}^{\pm})}
\newcommand{\eppu}{0<|u|<\epsilon}
\newcommand{\hdeupx}{e^{-tD^2_{\epsilon,u,x}}}
\newcommand{\udif}{\operatorname{UDiff}}
\newcommand{\ki}{L^2(\Omega^kL_x^0)}
\newcommand{\uc}{\operatorname{UC}}
\newcommand{\op}{\operatorname{Op}}
\newcommand{\deux}{D_{\epsilon,u,x}}
\newcommand{\pk}{\phi_k}
\newcommand{\hdeupsx}{e^{-sD^2_{\epsilon,u,x}}}
\newcommand{\hdeups}{e^{-sD^2_{\epsilon,u}}}
\newcommand{\hdeut}{e^{-tD^2_{\epsilon,u}}}
\newcommand{\indu}{\operatorname{ind}_{\Lambda}}
\newcommand{\stru}{\operatorname{str}_{\Lambda}}
\newcommand{\deuq}{D^2_{\epsilon,u}}
\newcommand{\intk}{\int_{\sqrt{k}}^{\infty}}
\newcommand{\dmd}{d\mu_{\Lambda,D_{\epsilon,u}}(x)}
\newcommand{\defox}{D_{x}^{\mathcal{F}_{\partial}}}
\newcommand{\mun}{\mu_{\Lambda,D_{\epsilon,u}}(x)}
\newcommand{\tsi}{\int_{-\sigma}^{\sigma}}
\newcommand{\ak}{\lim_{k\rightarrow \infty}\operatorname{LIM}_{s\rightarrow 0}}
\newcommand{\deus}{D_{\epsilon,u}e^{-tD_{\epsilon,u}^2}}

\newcommand{\deuss}{D_{\epsilon,u}^2e^{-tD_{\epsilon,u}^2}}
\newcommand{\pkd}{\phi_k^2}
\newcommand{\eup}{e^{-tD^{+}_{\epsilon,u}D^{-}_{\epsilon,u}}}
\newcommand{\eum}{e^{-tD^{-}_{\epsilon,u} D^{+}_{\epsilon,u} }}
\newcommand{\deussx}{D_{\epsilon,u,x}e^{-tD_{\epsilon,u,x}^2}}
\newcommand{\dessx}{S_{\epsilon,u,x}e^{-tS_{\epsilon,u,x}^2}}
\newcommand{\clib}{c(\partial_r)\partial_r \phi_k^2}
\newcommand{\sk}{\int_s^{\sqrt{k}}}
\newcommand{\esm}{S_{\epsilon,u}e^{-tS_{\epsilon,u}^2}}
\newcommand{\deussxo}{D_{\epsilon,u,z_0}e^{-tD_{\epsilon,u,x_0}^2}}
\newcommand{\dessxo}{S_{\epsilon,u,z_0}e^{-tS_{\epsilon,u,z_0}^2}}
\newcommand{\deusszo}{D^{\mathcal{F}_{\partial}}_{\epsilon,u,z_0}e^{-t(D^{\mathcal{F}_{\partial}}_{\epsilon,u,x_0})2}}
\newcommand{\desszo}{S_{\epsilon,u,x_0}e^{-tS_{\epsilon,u,x_0}^2}}
\newcommand{\essp}{S_{\epsilon,u}^2}
\newcommand{\esspo}{S_{0,u}^2}
\newcommand{\dotto}{\dot{\theta}}
\newcommand{\piep}{\Pi_{\epsilon}}
\newcommand{\ome}{\Omega}
\newcommand{\deffo}{D^{\mathcal{F}_{\partial}}}
\newcommand{\nablal}{\nabla_x^l}
\newcommand{\nablak}{\nabla_y^k}
\newcommand{\kerk}{[f(P)]_{(x_0,\bullet)} }
\newcommand{\kepp}{\operatorname{Ker} (D^{\mathcal{F}_0^+})}
\newcommand{\ty}{\infty}
\definecolor{light}{gray}{.95}
\newcommand{\pecetta}[1]{
$\phantom .$
\bigskip
\par\noindent
\colorbox{light}{\begin{minipage}{13.5 cm}#1\end{minipage}}
\bigskip
\par\noindent
}

\newcommand\Di{D\kern-7pt/}

\title{The Atiyah Patodi Singer index formula for measured foliations}
\author{\Large Paolo Antonini\\
Paolo.Antonini@mathematik.uni-regensburg.de\\
paolo.anton@gmail.com}

\maketitle























\begin{abstract}
Let $X_0$ be a compact Riemannian manifold with boundary endowed with a oriented, measured even dimensional foliation with purely transverse boundary. Let $X$ be the manifold with cylinder attached and extended foliation. We prove that the $L^2$--measured index of a Dirac type operator is well defined and the following Atiyah Patodi Singer index formula is true
$$\operatorname{ind}_{L^2,\Lambda}(D^+)=\langle\widehat{A}(X,\nabla)\operatorname{Ch}(E/S),C_{\Lambda}\rangle +1/2[\eta_{\Lambda}(D^{\mathcal{F}_{\partial}})-h^+_{\Lambda}+h^{-}_{\Lambda}].$$ Here $\Lambda$ is a holonomy invariant transverse measure,  $\eta_{\Lambda}(D^{\mathcal{F}_{\partial}})$ is the Ramachandran eta invariant \cite{Rama} of the leafwise boundary operator and the $\Lambda$--dimensions $h^{\pm}_{\Lambda}$ of the space of the limiting values of extended solutions is suitably defined using square integrable representations of the equivalence relation of the foliation with values on weighted Sobolev spaces on the leaves.

\end{abstract}

\section{Introduction}Let $X_0$ be an even dimensional oriented compact Riemannian manifold with boundary equipped with a unitary Clifford module $E\longrightarrow X_0$ with compatibile Clifford connection. Suppose each geometric structure is product type near the boundary. It is a well known fact since the seminal paper by Atiyah Patodi and Singer \cite{AtPaSi1} that the index problem for the Dirac operator $D$ in $X_0$ can be approached in at least two ways;

\noindent 1. a generalized boundary value problem with pseudodifferential boundary condition (the Atiyah Patodi Singer boundary condition)

\noindent 2. an $L^2$ index problem on the manifold $X$ obtained attaching a cylinder to $X_0$ across its boundary.
Actually there is a third completely independent point of view, that of Melrose's $b$ geometry \cite{Me}. This can be seen to correspond to a compactification of $X$ joining a boundary at the infinity.

\noindent  Indeed the operator splits near the boundary as $D=\sigma(D_0+\partial_r)$ where $\sigma$ is a bundle isomorphism, $D_0$ is a Dirac operator on the boundary and $\partial_r$ is the normal derivative. 
Call $\widetilde{D}$ the naturally extended operator on $X$. One can show that $\widetilde{D}$ is Fredholm if and only if the boundary operator is invertible in $L^2$ \cite{Me} but the kernels $\operatorname{Ker}_{L^2}(\widetilde{D}^\pm)$ are finite dimensional and the difference of these dimensions is called the $L^2$ index of $\widetilde{D}$. 
The Atiyah Patodi Singer formula computes this index in terms of the Atiyah Singer local integrand, the eta invariant $\eta(D_0)$ of the boundary operator  and some correcting numbers related to the spaces of the $L^2$--extended solutions on the cylinder,
$$\operatorname{ind}_{L^2}(D^+)=\int_{X_0}\widehat{A}(X_0,\nabla)\operatorname{Ch}(E/S)+\dfrac{1}{2}[\eta(D_0)+h^--h^+].$$
\noindent If $\partial X_0$ has no boundary and is foliated by a smooth foliation equipped with a holonomy invariant transverse measure, Alain Connes \cite{Cos} has generalized, in the contest of non commutative geometry, the Atiyah Singer index formula for a leafwise Dirac operator on $X_0$ i.e. a family of Dirac operators one for each leaf that vary transversally in a measurable way. This result can be seen as a generalization of the Atiyah $L^2$--index theorem for Galois coverings $\Gamma-\widetilde{X_0}\longrightarrow X_0,$ where the Von Neumann algebra associated to the right regular representation of the deck group $\Gamma$ is used to define the $L^2$--index of the lifted Dirac operator on the total space $\widetilde{X_0}.$ In spite of geometrical applications (the signature operatore and the signature formula) one can ask about the existence of an Atiyah Patodi Singer index formula for a foliated manifold with boundary with foliation transverse\footnote{in order to have an induced foliation on the boundary} (normal) to the boundary and a holonomy invariant transverse measure $\Lambda$. This formula has to reflect both the structure of the formula of Connes and Atiyah Patodi Singer. Mohan Ramachandran \cite{Rama} partially solved the problem, proving the A.P.S. theorem for measured foliations for the boundary value problem with A.P.S. boundary condition. In this paper we adopt the second point of view proving the index formula for foliations of manifolds with cylindrical ends. We work, as Ramachandran at level of the leaves using the equivalence relation to prove our main result \begin{thm}
The Dirac operator has finite dimensional $L^2-\Lambda$--index and the following formula holds
$$
\operatorname{ind}_{L^2,\Lambda}(D^+)=\langle\widehat{A}(X)\operatorname{Ch}(E/S),C_{\Lambda}\rangle +1/2[\eta_{\Lambda}(D^{\mathcal{F}_{\partial}})-h^+_{\Lambda}+h^{-}_{\Lambda}]$$ where
$
h^{\pm}_{\Lambda}:=\operatorname{dim}_{\Lambda}(\operatorname{Ext}(D^{\pm})-\operatorname{dim}_{\Lambda}(\operatorname{Ker}_{L^2}(D^{\pm})$ 
are the suitably defined $\Lambda$--dimensions of the space of extended solutions, $C_{\Lambda}$ is the Ruelle--Sullivan current associated to the transverse measure $\Lambda$ and $\eta_{\Lambda}(D^{\mathcal{F}_{\partial}})$ is the Ramachandran eta invariant of the boundary leafwise operator.
\end{thm}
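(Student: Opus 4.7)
The plan is to mimic the heat equation / McKean--Singer approach of Atiyah--Patodi--Singer, combining Connes' measured index theorem on closed foliated manifolds with Ramachandran's leafwise APS theorem for the boundary value problem. The starting point is to realize sections of $E \to X$ as a measurable field of Hilbert spaces indexed by the transverse parameter $x$, on which the leafwise Dirac operator $D$ acts as an unbounded self-adjoint operator affiliated to the von Neumann algebra $\vo$. First I would show that $\operatorname{ind}_{L^2,\Lambda}(D^+)$ is well defined, i.e.\ that $\dn \Ker_{L^2}(D^\pm)<\infty$. On each leaf, $L^2$--solutions on the cylindrical end decay exponentially at a rate dictated by the smallest positive eigenvalue of the leafwise boundary operator $D^{\mathcal{F}_\partial}_x$; measurability of the spectral decomposition of $D^{\mathcal{F}_\partial}$ in the transverse parameter then lets one check that the projection onto $\Ker_{L^2}(D^\pm)$ lies in $\vo$ and has finite $\Lambda$--trace.

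Next I would set up the measured McKean--Singer identity. Because $e^{-tD^2}$ is not of $\Lambda$--trace class on the cylindrical--end manifold (the model boundary operator on the cylinder has purely continuous spectrum contributing an infinite trace), one passes to a regularized supertrace obtained by subtracting the model cylinder contribution, and proves that this regularized quantity is $t$--independent and equals $\operatorname{ind}_{L^2,\Lambda}(D^+)$. The $t\to 0$ asymptotics are then computed by a foliated Getzler rescaling, in the spirit of Heitsch--Lazarov's treatment of Connes' theorem, producing the local integrand $\langle\widehat A(X)\Ch(E/S),C_\Lambda\rangle$; the subtracted cylindrical model contributes nothing to the $t\to 0$ expansion because the local Atiyah--Singer integrand on a product cylinder vanishes identically.

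The core of the argument is then the analysis of the large--$t$ behaviour. Decomposing $D$ near the boundary as $\sigma(\partial_r + D^{\mathcal F_\partial})$ and applying Duhamel's principle leafwise, the $t$--integrated supertrace identity splits off, as $t\to\infty$, the projection onto generalized zero modes and a boundary transgression term. A leafwise APS transgression computation identifies the boundary contribution, averaged against $\Lambda$, with $\tfrac12\eta_\Lambda(D^{\mathcal F_\partial})$, with Ramachandran's finite--part definition entering crucially since the pointwise leafwise eta integrals need not converge. The mismatch between the $L^2$--kernel and the true spectral projection of $D$ at $0$ in its natural $L^2$--closure accounts for the extended--solution correction $-\tfrac12(h^+_\Lambda - h^-_\Lambda)$: as in the classical APS argument, extended but not genuinely square integrable solutions contribute one half to the long--time supertrace, and the difference of $\Lambda$--dimensions $\dn \operatorname{Ext}(D^\pm) - \dn\Ker_{L^2}(D^\pm)$ is precisely $h^\pm_\Lambda$.

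The main obstacle, which distinguishes the measured case from the scalar one, is that leaves are typically noncompact and the positive spectral gap of $D^{\mathcal F_\partial}_x$ may degenerate as $x$ varies in the transversal. Consequently the pointwise eta integrand, the pointwise spectral projections, and the difference of heat kernels between $D$ and its cylindrical model are only meaningful as measurable objects on the leaf space and must be controlled in $L^1(\Lambda)$ rather than uniformly; moreover the usual unique--continuation--type arguments used to compare kernels on the cylinder and on $X$ must be replaced by leafwise statements valid almost everywhere against $\Lambda$. Ramachandran's framework of square integrable representations on weighted Sobolev spaces, together with the finite--part regularization of $\eta_\Lambda$, is tailored to this situation, and I expect the bulk of the technical work to consist of cutoff and deformation arguments (presumably the $\epsilon,u$ deformation and cutoff $\phi_k$ anticipated by the paper's notation) that split the analysis between a compact core of $X$ and the cylindrical tail while preserving $\Lambda$--integrability uniformly in $t$ before the regularizing parameters are removed.
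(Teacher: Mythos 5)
Your outline follows the relative (cylinder-subtracted) heat supertrace route, which is genuinely different from the paper's proof (a Vaillant-style two-parameter Breuer--Fredholm perturbation), but as written it has real gaps. First, the claim that the regularized supertrace is $t$-independent and equals the index is false and is in tension with the transgression term you invoke two sentences later: the $t$-derivative of the cylinder-subtracted supertrace is precisely a boundary term of the form $\tfrac{1}{\sqrt{4\pi t}}\operatorname{tr}_{\Lambda}\bigl(D^{\mathcal{F}_{\partial}}e^{-t(D^{\mathcal{F}_{\partial}})^2}\bigr)$, and this non-vanishing derivative is exactly where $\eta_{\Lambda}$ comes from. More seriously, the large-$t$ analysis you defer is the heart of the matter, not a technical refinement: since $0$ need not be isolated in the $\Lambda$-essential spectrum of $D^{\mathcal{F}_{\partial}}$ (no uniform leafwise gap, continuous spectrum down to $0$), $D$ is not Breuer--Fredholm, $e^{-tD^2}$ does not converge in $\Lambda$-trace to the kernel projections, and the eta integral need not converge at $t=\infty$. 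The paper's entire strategy is built to circumvent this: the perturbation $D_{\epsilon,u}=D+\dot{\theta}\,\Omega(u-D^{\mathcal{F}_{\partial}}\Pi_{\epsilon})$ is shown, via the splitting principle for the $\Lambda$-essential spectrum, to be Breuer--Fredholm for $0<|u|<\epsilon$, the cutoff supertraces $\operatorname{str}_{\Lambda}(\phi_k e^{-tD_{\epsilon,u}^2}\phi_k)$ are analyzed with cylindrical finite propagation speed and Cheeger--Gromov--Taylor comparisons, and only then are the limits $u\downarrow 0$, $\epsilon\downarrow 0$ taken. Your proposal acknowledges this deformation only as anticipated bookkeeping, so the argument as it stands does not close.

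Two further specific points. Your finiteness argument for $\operatorname{dim}_{\Lambda}\operatorname{Ker}_{L^2}(D^{\pm})$ rests on exponential decay at a rate given by the smallest positive eigenvalue of $D^{\mathcal{F}_{\partial}}_x$; on noncompact boundary leaves there is no such eigenvalue and the spectrum can reach $0$, so this mechanism is unavailable. The paper instead uses the Browder--G{\aa}rding generalized eigenfunction expansion, splits at a spectral cut $a>0$, proves finiteness only of the local traces on sets of the form (compact in $\partial L_x$)$\times$cylinder, and obtains finite $\Lambda$-dimension by integrating these longitudinal measures against $\Lambda$. Likewise, the assertion that extended solutions ``contribute one half to the long-time supertrace'' is not derived; in the measured setting the terms $h^{\pm}_{\Lambda}$ enter through the identification of $\operatorname{Ext}(D^{\pm}_{\epsilon})$ with kernels in weighted spaces $e^{u\theta}L^2$, i.e. with $\operatorname{Ker}_{L^2}(D^{\pm}_{\epsilon,\pm u})$, the resulting relations $\operatorname{ind}_{L^2,\Lambda}(D^{+}_{\epsilon})=\lim_{u\downarrow 0}\operatorname{ind}_{\Lambda}(D^{+}_{\epsilon,u})-h^{+}_{\Lambda,\epsilon}=\lim_{u\downarrow 0}\operatorname{ind}_{\Lambda}(D^{+}_{\epsilon,-u})+h^{-}_{\Lambda,\epsilon}$, and the eta shift $\eta_{\Lambda}(Q_u)-\eta_{\Lambda}(Q_0)=\operatorname{sign}(u)\operatorname{tr}_{\Lambda}(\Pi_{\epsilon})$, together with the continuity statements as $\epsilon\downarrow 0$. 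Without an argument replacing these steps, the route you sketch does not yet yield the stated formula.
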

\noindent Our proof is a generalization of a method of Boris Vaillant \cite{Vai} that proved the Atiyah Patodi Singer index formula for Galois coverings of manifolds with cylindrical ends. 
In section 2 we introduce the geometric settings and notations. In section 3 we review the classical Atiyah Patodi Singer index formula paying attention to its cylindrical $L^2$--version. In section 4 
 we show that one can define a version of the essential spectrum of the operator relatively to the Von Neumann algebra of the foliation. Since this spectrum is stable by $\Lambda$--compact perturbations one can carry to foliations the result that the Fredholmness in the sense of Breuer \cite{Br2} is equivalent to boundary invertibility.
 More precisely a \emph{splitting principle} is valid: the Von Neumann essential spectrum is determined by the leafwise behaviour outside compact sets.
  In section 5 we show that the measured $L^2$ index of the Dirac operator is finite. This is our form of elliptic regularity. More precisely we show that the longitudinal measures corresponding to the projections on the kernel and the cokernel are finite on sets of the 
 form (compact in the boundary) $\times$ (cylinder). The construction of the trace as the integration of longitudinal measures against transverse measures makes the rest. 
  Then we perform a two parameter perturbation making the operator Breuer--Fredholm. This can be done, thanks to the splitting principle, by a modification of the boundary operator. In the proof of the index formula we will carefully examine the behaviour of the $\Lambda$--dimensions of the corresponding spaces of the extended solutions of the perturbed operator showing that they converge to the dimensions of the non perturbed one. Each leaf is a manifold with bounded geometry and cylindrical ends. The perturbed operator has product structure displaying a regularizing operator in the cross section of the cylinder times an order one operator with finite propagation speed in the cylindrical direction. In section 6 the cylindrical finite propagation speed is exploited, as in \cite{Vai} to prove some Cheeger Gromov Taylor estimates that relates the heat kernel of the perturbed operator with the non perturbed one.
In section 7 we discuss the existence and convergence properties of the Ramachandran eta invariant for the perturbation. Finally in section 8 the index formula is proven and, in section 9 is shown to be compatible with that of Ramachandran. 

\noindent The author wishes to thanks its thesis advisor Paolo Piazza for having suggested him the problem and for a lot of interesting discussions.
  
\begin{small}
\tableofcontents
\end{small}
 
\section{Geometric Setting}\label{geom}
\noindent A $p$--dimensional foliation $\mathcal{F}$ on a $n$--dimensional manifold with boundary $ X_0$ is transverse to the boundary if it is given by a foliated atlas $\{U_{\alpha}\}$ with homeomorphisms $\phi_{\alpha}:U_{\alpha}\longrightarrow V_{\alpha}\times W_{\alpha}$ with $V_{\alpha}$ open in ${\mathbb{H}}^p:=\{(x_1,...,x_p)\in \R^p:x_1\geq 0\}$ and $W^{q}$ open in $\R^q$ with change of coordinated $\phi_{\alpha}(u,v)$ of the form
 \begin{equation}\label{9009}v'=\phi(v,w),\quad w'=\psi(w)\end{equation} ($\psi$ is a local diffeomorphism).
 Such an atlas is assumed to be maximal among all collections of this type. 
 The integer $p$ is the dimension of the foliation, $q$ its codimension and $p+q=n.$
 In each foliated chart, the connected components of subsets as $\phi_{\alpha}^{-1}(V_{\alpha}\times \{w\})$ are called \underline{plaques}. The plaques coalesce (thanks to the change of coordinate condition \eqref{9009}) to give maximal connected injectively immersed (not embedded) submanifolds called \underline{leaves}. One uses the notation $\mathcal{F}$ for the set of leaves. Note that in general each leaf passes infinitely times through a foliated chart so a foliation is only locally a fibration.
Taking the tangent spaces to the leaves one gets an integrable subbundle $T\mathcal{F}\subset TX_0$ that's transverse to the boundary i.e $T\partial X_0 + T\mathcal{F}=TX_0$ in other words the boundary is a submanifold that's transverse to the foliation.
\noindent Let given on $X$ a smooth oriented foliation $\mathcal{F}$ with leaves of dimension $2p$ respecting the cylindrical structure i.e. 
\begin{enumerate}
\item The submanifold $\partial X_0$ is transversal to the foliation and inherits a foliation with the same codimension $\mathcal{F}_{\partial}=\mathcal{F}_{|\partial X_0}$ with foliated atlas given by $\phi_{\alpha}:U_{\alpha}\cap \partial X_0 \longrightarrow \partial V_{\alpha} \times W_{\alpha}$. 
\item The restriction of the foliation on the cylinder is product type $\mathcal{F}_{|Z}=\mathcal{F}_{\partial}\times [0,\infty).$\end{enumerate}
\noindent Note that these conditions imply that the foliation is \underline{normal} to the boundary.
\noindent The orientation we choose is the one given by $(e_1,..,e_{2p-1},\partial_r)$ where $(e_1,..,e_{2p-1})$ is a positive leafwise frame for the induced boundary foliation. As explained in \cite{AtPaSi1} 
this will choose one of the two possible boundary Dirac operators.
\noindent Let $E\longrightarrow X$ be a leafwise Clifford bundle with leafwise Clifford connection $\nabla^E$ and Hermitian metric $h^E$. Suppose each geometric structure is of product type on the cylinder meaning that if $\rho:\partial X_0\times [0,\infty) \longrightarrow \partial X_0$ is the base projection
$$E_{|Z}\simeq \rho^*(E_{|\partial X_0}),\quad h^E_{|\partial X_0}=\rho^*(h^{E}_{|\partial X_0}),\quad \nabla^E_{|Z}=\rho^*(\nabla^{E}_{|\partial X_0}).$$
\noindent Each geometric object restricts to the leaves to give a longitudinal Clifford module that's canonically $\mathbb{Z}_2$--graded by the leafwise chirality element. One can check immediately that the positive and negative boundary eigenbundles $E^+_{\partial X_0}$ and $E^-_{\partial X_0}$ are both modules for the Clifford structure of the boundary foliation \cite{Me}.
\noindent Leafwise Clifford multiplication by $\partial_r$ induces an isomorphism of  Clifford modules $c(\partial_r):E_{\partial X_0}^+\longrightarrow E_{\partial X_0}^-.$ Put $F=E^+_{|\partial X_0}$ the whole Clifford module on the cylinder $E_{|Z}$ can be identified with the pullback $\rho^*(F\oplus F)$ with the following action:
tangent vectors to the boundary foliation $v\in T\mathcal{F}_{\partial}$ acts as
 $c^{E}(v)\simeq c^F(v)\Omega$ with $\Omega=\left(\begin{array}{cc}0 & 1 \\1 & 0\end{array}\right)$ while in the cylindrical direction $c^F(\partial_r)\simeq \left(\begin{array}{cc}0 & -1 \\1 & 0\end{array}\right)$. 
\noindent In particular one can form the longitudinal Dirac operator assuming under the above identification the form\begin{equation}
\label{diracco}
D=c(\partial_r)\partial_r+c_{|\mathcal F_0}\nabla^{E_{|\mathcal{F}_{\partial}}}=
c(\partial_r)\partial_r+\Omega D^{\mathcal{F}_{\partial}}=
c(-\partial_r)[-\partial_r-c(-\partial_r)\Omega D^{\mathcal{F}_{\partial}}].\end{equation}
Here 
$D^{\mathcal{F}_{\partial}}$ is the leafwise Dirac operator on the boundary foliation.  
\noindent In the following, these identifications will be omitted letting $D$ act directly on $F\oplus F$ according to
\begin{align}\nonumber
&\left(\begin{array}{cc}0 & D^- \\D^+ & 0\end{array}\right)=\left(\begin{array}{cc}0 & -\partial_r+D^{\mathcal{F}_{\partial}} \\ 
 \partial_r+D^{\mathcal{F}_{\partial}}
& 0\end{array}\right)=\left(\begin{array}{cc}0 & \partial_u+D^{\mathcal{F}_{\partial}} \\ 
 -\partial_u+D^{\mathcal{F}_{\partial}}
& 0\end{array}\right)
\end{align} where $u=-r$, $\partial_u=-\partial_r$ (interior unit normal)
note this is the opposite of A.P.S. notation.
\noindent We shall use the notation $X=X_k\cup Z_k$ with $Z_k=\partial X_0\times [k,\infty)$ and $X_k=X_0\cup (\partial X_0\times [0,k])$ also $Z_a^b:=\partial X_{0}\times [a,b]$ and where there's no danger of confusion $Z_x$ is the cylinder of the leaf passing through $x$, $Z_x=L_x\cap Z_0.$

\section{The Atiyah Patodi Singer index theorem}\label{aaps}
\noindent We are going to recall the classical Atiyah--Patodi--Singer index theorem in \cite{AtPaSi1}.
So let $X_0$ be a compact $2p$ dimensional manifold with boundary $\partial X_0$ and consider a Clifford bundle $E$ with all the geometric structure  as in the previous section. We take here the opposite orientation of A.P.S i.e. we use the exterior unit normal to induce the boundary operator instead of the interior one. In other words 
$D^{+}_{\textrm{here}}=D^{-}_{\textrm{APS}}.$ 
The operator writes in a collar around the boundary $
\left(\begin{array}{cc}0 &D^- \\ 
 D_+
& 0\end{array}\right)=
\left(\begin{array}{cc}0 & -\partial_r+D_0 \\ 
 \partial_r+D_0
& 0\end{array}\right)$ where $\partial_r$ is the exterior unit normal and $D_0$ is a Dirac operator on the boundary. It is shown in \cite{AB} that the $K$--theory of the boundary manifold contains topological obstructions to the existence of elliptic boundary value conditions of local type (for the signature operator they are always non zero). If one enlarges the point of view to admit global boundary conditions a Fredholm problem (with Calderon projection \cite{lesch})
 can properly set up. More precisely, consider the boundary operator $D_0$ acting on $\partial X_0$. This is a first order elliptic differential operator with real discrete spectrum on $L^2(\partial X_0;F)$. Let $P=\chi_{[0,\ty)}(D_0)$ be its pseudo differential spectral projection on the non negative part of the spectrum. Then
 
\noindent {\bf{1}.} The (unbounded) operator $D^+:C^{\ty}(X;E^+,P)\longrightarrow C^{\ty}(X,E^-)$ with domain 

$C^{\ty}(X;E^+,P):=\{s\in C^{\ty}(X;E^+):P(s_{|\partial X_0})=0\}$ is Fredholm and the index is given by the formula
$$\operatorname{ind}_{\operatorname{APS}}(D^+)=\int_{X_0} \widehat{A}(X,\nabla )\operatorname{Ch}(E,\nabla)-h/2+\eta(0)/2$$ with the Atiyah--Singer 
$\widehat{A}(X,\nabla)$ 
differential form\footnote{due to the presence of the boundary one does not have here a cohomological pairing, for this reason the notation $\widehat{A}(X,\nabla)$ stresses the dependence from the metric through the connection $\nabla$}, the twisted Chern character $\operatorname{Ch}(E,\nabla)$ \cite{BeGeVe,Me} and two correcting terms:
\begin{itemize}
\item $h:=\operatorname{Ker}(D_0)$ is the dimension of the kernel of the boundary operator
\item $\eta(0)$, the eta invariant of $D_0$ is a spectral invariant which 
gives a measure of the asymmetry of its spectrum. This is extensively explained in section \ref{eta}.
\end{itemize}

\noindent {\bf{2.}} The index formula can be interpreted as a natural $L^2$ problem on the  manifold with a cylinder attached $X$ and every structure pulled back. 
\noindent More precisely the kernel of \\ \noindent $D^{+}:C^{\ty}(X;E^+,P)\longrightarrow C^{\ty}(X,E^-)$ turns out to be naturally isomorphic to the kernel of $D^{+}$ extended to an ubounded operator on $L^2(X)$ while to describe the kernel of its Hilbert space adjoint i.e. the closure of $D^-$ with the adjoint boundary condition
$D^{-}:C^{\ty}(X;E^-,1-P)\longrightarrow C^{\ty}(X,E^+)$ the space of \emph{extended} $L^2$ \emph{solutions} must be introduced.

\noindent A locally square integrable solution $s$ of the equation $D^-s=0$ on $X$ is called an extended solution if for large positive $r$ the equation
\begin{equation}\label{ex2}
s(y,r)=g(y,r)+s_{\ty}(y)\end{equation} is satisfied where $y$ is the coordinate on the base $\partial X_0$ and $g\in L^2$ while $s_{\ty}$ solves $D_0 s_{\ty}=0$ and is called \underline{the limiting value} of $s$.

\noindent APS prove that the kernel of $(D^+)^*$ (Hilbert space adjoint of $D^{+}$ with domain $C^{\ty}(X;E^+,P)$) is naturally isomorphic to the space of $L^2$ extended solution of $D^-$ on $X$.
Moreover \begin{equation}\label{ldueindex}\operatorname{ind}_{\textrm{APS}}(D^+)=\operatorname{dim}_{L^2}(D^+)-\operatorname{dim}_{L^2}(D^-)-h_{\ty}(D^{-})=\operatorname{ind}_{L^2}(D^+)-h_{\ty}(D^{-})\end{equation} where $\operatorname{ind}_{L^2}(D^+):=\operatorname{dim}_{L^2}(D^+)-\operatorname{dim}_{L^2}(D^-)$
and
 the number $h_{\ty}(D^{-})$ is the dimension of the space of limiting values of the extended solutions of $D^-$.
The number at right in \eqref{ldueindex} is sometimes called the $L^2$ \underline{extended index}. Along the proof of \eqref{ldueindex} the authors prove that
\begin{equation}\label{hsp}h=h_{\ty}(D^+)+h_{\ty}(D^-)\end{equation} 
and conjecture that it must be true at level of the kernel of $D_0$ i.e. 

\noindent \emph{every section in} 
$\operatorname{Ker}(D_0)$ 
\emph{is uniquely expressible as a sum of limiting values coming from} 
$D^+$ 
\emph{and} 
$D^-$. 

\noindent The conjecture was solved by Melrose with the invention of the $b$--calculus, a pseudo--differential calculus on a compactification of $X$ that furnished a totally new point of view on the APS problem \cite{Me}. 

\noindent With \eqref{ldueindex} and \eqref{hsp} the index formula is
$$\operatorname{ind}_{L^2}(D^+)=\int_{X_0} \widehat{A}(X_0)\operatorname{Ch}(E)+\dfrac{\eta(0)}{2}+\dfrac{h_{\ty}(D^-)-h_{\ty}(D^+)}{2}.$$

\noindent Finally a naive remark on the nature of the extended solutions in order to motivate our definition of their Von Neumann counterparts $h_{\infty}(D^{\pm})$ (equation \eqref{deffh} and \eqref{1001}). For a real parameter $u$ say that a distributional section $s$ on the cylinder is in the weighted $L^2$--space 
$e^{ur}L^2(\partial X_0\times [0,\ty);E^{\pm})$ if $e^{-ur}s\in L^2$. The operator $D^{\pm}$ trivially esxtends to act on each weighted space. Now it is evident from \eqref{ex2} that an $L^2$--extended solution of the equation $D^+s=0$ is in each $e^{ur}L^2$ for positive $u$. Viceversa let $s\in \bigcap_{u>0}\operatorname{Ker}_{e^{ur}L^2}(D^+)$. Keep $u$ fixed, then $e^{-ur}s\in L^2$ can be represented in terms of a complete eigenfunction expansion for the boundary operator $D_0$,
$e^{-ur}s=\sum_{\lambda}\phi_{\lambda}(y)g(r).$ Solving $D^+s=0$ together with the condition $e^{-ur}s\in L^2$ leads to the representation (on the cylinder)
$s(y,r)=\sum_{\lambda >-u}\phi_{\lambda}(y)g_{0\lambda}(y)e^{-\lambda r}.$ Since $u$ is arbitrary we see that $s$ should have a representation as a sum
$s(y,r)=\sum_{\lambda \geq 0}\phi_{\lambda}(y)g_{0\lambda}e^{-\lambda r}$
over the non negative eigenvalues of $D_0$, i.e. $s$ is 
an extended solution with limiting value $\sum_{\lambda=0}\phi_0(y)g_{00}.$
 We have proved that
$$\operatorname{Ext}(D^{\pm})=\bigcap_{u>0}\operatorname{Ker}_{e^ur L^2}(D^{\pm}).$$

\section{Von Neumann algebras, foliations and index theory}
\noindent The main reference here is the original paper of Alain Connes \cite{Cos} or the book by Moore and Schochet \cite{MoSc}.
Since we shall be interested, in a future paper to formulate boundary value problems we choose to work with the equivalence relation $\mathcal{R}$ of the foliation since we believe, with Ramachandran \cite{Rama} that's the most natural ambient where to write boundary conditions. With its natural Borel structure $\mathcal{R}$ is a Borel groupoid with start, range and composition defined by:
$$s(x,y)=y,\,r(x,y)=x,\,(x,y)\cdot (y,z)=(x,z).$$ The $r$--fiber $r^{-1}(x)$ is denoted by $\mathcal{R}^x=\{(x,y):y\in L_x\}.$  
If $\{H_x\}$ is a Borel field of Hilbert spaces on the base $X$, a representation of $\mathcal{R}$ is a functor $U$ from $\mathcal{R}$ to the category of Hilbert spaces i.e. $U(x,y):H_x\longrightarrow H_y$ is a unitary isomorphism for every $(x,y)\in \mathcal{R}$. This functor has to be measurable in a precise sense \cite{Cos}. A longitudinal measure is a collection of measures $\{\nu^x\}_{x\in X}$ with $\nu^x$ supported on $\mathcal{R}^x$ that is $\mathcal{R}$--invariant i.e. $\nu^x=\nu^y$ if $(x,y)\in \mathcal{R}$. If $\nu$ is a longitudinal measure one can define a kernel in the sense of measure theory \cite{revuz} pushing forward by left traslation i.e. putting $R(\nu)_{\gamma}:=\gamma \cdot \nu^x$, $\gamma \in \mathcal{R}$, $s(\gamma)=x$. One calls $\nu$ proper if $R(\nu)$ is proper. For a proper longitudinal measure left traslation gives a representation $L^\nu$ of $\mathcal{R}$ valued on the field of $r$--fibers $H_x=L^2(\mathcal{R}^x,\nu^x).$ A representation is square integrable if it is equivalent to some subrepresentation of a denumerable union of $L^\nu$ for a proper $\nu$.

\noindent Longitudinal measures pair with transverse measures. This pairing process is well explained in \cite{MoSc}. We shall deal only with transverse measures coming from holonomy invariant transverse measures for the foliation. Remember that a Borel set $T\subset X$ is called a Borel transversal if it intersects each leaf in a atmost denumerable set. The set of all Borel transversal is a $\sigma$--ring. A finitely additive measure $\nu$ on this $\sigma$--ring is called holonomy invariant if for every Borel bijection $\phi:T_1\longrightarrow T_2$ with $(x,\phi(x))\in \mathcal{R}$ then $\nu(T_1)=\nu(T_2).$ Remember that for oriented foliations holonomy invariant transverse measures arise from foliated closed currents or from flows generated by vector fields tangent to the leaves. The Ruelle--Sullivan isomorphism is the correspondence between invariant measures and foliated currents. On foliated charts transverse measures can be disintegrated along the plaques. 

\noindent Suppose $U,V$ are two representation with values on the fields of Hilbert spaces $H,K$. A uniformly bounded Borel family of bounded operators $T_x:H_x\longrightarrow K_x$ intertwines $U$ and $V$ if 
\noindent $V_{(x,y)}\circ T_x=T_y \circ U_{(x,y)}$, $(x,y)$--a.e..

\noindent For square integrable representations on the fields of Hilbert spaces $H_i$ let $\operatorname{Hom}_{\mathcal{R}}(H_1,H_2)$ the vector space of the {intertwining} {operators}. A holonomy invariant measure gives rise to a quotient projection $\operatorname{Hom}_{\mathcal{R}}(H_1,H_2)\longrightarrow \operatorname{Hom}_{\Lambda}(H_1,H_2)$ given by identification modulo $\Lambda$-a.e. equality. Elements of $\operatorname{Hom}_{\Lambda}(H_1,H_2)$ are called {Random}  {operators} between the random Hilbert spaces $H_i$.
If $H_1=H_2=H$, then $\operatorname{Hom}_{\mathcal{R}}(H,H)=\operatorname{End}_{\mathcal{R}}(H)$ is an involutive algebra, the quotient via $\Lambda$ is the semifinite Von Neumann algebra\footnote{to be precise this is a $W^*$ algebra in fact it is not naturally represented on some Hilbert space. The choice of a longitudinal measure $\nu$ gives however a representation 
$\operatorname{End}_{\mathcal{R}}(H)\longrightarrow B(\int_X H_x d\Lambda_{\nu}(x))$ on the direct integral of the field $H_x$
}
$\operatorname{End}_{\Lambda}(H);$ its natural trace $\operatorname{tr}_{\Lambda}$ is defined using the coupling longitudinal/transverse measures \cite{MoSc}. Indeed to trace a field of operators one first looks at the field of its local traces\footnote{this is a notion introduced by Atiyah, the local trace of a positive (in general not trace class) operator $T$ on $L^2(Y,\mu)$ is the measure $A\longmapsto \operatorname{tr}_{L^2}(\chi_A T \chi_A)$
} \cite{MoSc}. This is a longitudinal measure that can be integrated against $\Lambda$ and the result is the trace.

\bigskip
\noindent For a vector bundle $E\longrightarrow X$ 
let $L^2(E)$ 
be the Borel field of Hilbert spaces on $X$ fixed by the leafwise square integrable sections 
$\{L^2(L_x,E_{|L_x})\}_{x\in X}$. There is a natural square integrable representation of 
$\re$ on 
$L^2(E)$ the one given by 
$(x,y)\longmapsto \operatorname{Id}:L^2(L_x,E)\longrightarrow L^2(L_y,E)$. 
\noindent Denote 
$\operatorname{End}_{\mathcal{R}}(E)$ the vectorspace of  all intertwining operators and $\operatorname{Hom}_{\Lambda}(E)$ the corresponding Von Neumann algebra.

\noindent Since we need unbounded operators we have to define measurability for fields of closed unbounded operators. We say that the field of closed unbounded operators $T_x$ is measurable if are measurable the fields of bounded operators $u_x$ and $(1+T_x^*T_x)^{-1}$ that determine univoquely the polar decomposition $T=u|T|.$
\bigskip

\noindent Next, we review some ingredients from Breuer theory of Fredholm operators on Von Neumann algebras, adapted to the semifinite case with some notions translated in the language of the essential $\Lambda$--spectrum, a straightforward generalization of the essential spectrum of a self--adjoint operator. The main references are \cite{Br2}, \cite{cp0} and \cite{cp}.
\noindent Remember that the set of projections $\mathcal{P}:=\{A\in \vo, A^*=A, A^2=A\}$ of a Von Neumann algebra, has the structure of a complete lattice i.e. for every family 
 $\{A_i\}_i$ of projections one can form their join
 $\operatorname{V} A_i$ and their meet 
 $\Lambda A_i$. 
 Then for a random operator $A\in \vo$ we can define its projection on the range $R(A)\in \mathcal{P}(\vo)$ and the projection on its kernel $N(A)\in \mathcal{P}(\vo)$ by \noindent $R(A):=\operatorname{V}\{P\in \mathcal{P}(\vo):PA=A\}$ and $N(A):=\Lambda\{P\in \mathcal{P}(\vo):PA=P\}$. If $A$ is the class of the measurable field of operators $A_x$, it is clear that $R(A)$ and $N(A)$ are the classes of $R(A)_x$ and $N(A)_x$.
 
\noindent Let $H_i$, $i=1,..,3$ be square integrable representations of $\mathcal{R}$; define
 $\Lambda$--finite rank random operators $B^{f}_{\Lambda}(H_1,H_2):=\{A \in \operatorname{Hom}_{\Lambda}(H_1,H_2) :\tru R(A)<\infty\}$,
$\Lambda$--compact random operators $B^{\infty}_{\Lambda}(H_1,H_2)$ as the norm closure of finite rank operators.
The $\Lambda$--Hilbert--Schmidt random operators are $B^2_{\Lambda}(H_1,H_2):=\{A \in \operatorname{Hom}_{\Lambda}(H_1,H_2): \tru(A^*A)<\infty\}$
and $\Lambda$--trace class operators $B^1_{\Lambda}(H)=B^2_{\Lambda}(H)B^2_{\Lambda}(H)^*= \{\sum_{i=1}^nS_iT_i^*:S_i,T_i\in B^2_{\Lambda}(H)\}$.
\noindent It is easy to check, as for $B(H)$, $H$ Hilbert space, that
$B^{*}_{\Lambda}(H)$ is a $*$--ideal in $\vo$ for $\ast=1,2,\infty$. An element $A\in B^*_{\Lambda}(H)$ iff $|A|\in B^*_{\Lambda}(H)$, we have
the inclusion 
$\bif \subset \buno \subset \bdue \subset \binf$ and $\buno=\{A\in \vo:\tru |A|<\infty\}.$
\noindent An important inequality we shall use (a proof in chapter $V$ of \cite{take}) is the following, take $A\in \buno$ and $C\in \operatorname{End}_{\Lambda}(H)$. We  have polar decompositions $A=U|A|$ and $C=V|C|$ then $|A|=U^*A\in \buno$, $|A|^{1/2}\in \bdue$ and \begin{equation}\label{disugess}|\tru (CA)|\leq \|C\|\tru|A|.\end{equation} 
\begin{dfn}\label{99990}
A random operator $F\in \operatorname{Hom}_{\Lambda}(E_1,E_2)$ is $\Lambda$--Fredholm (Breuer--Fredholm) if there exist $G\in \operatorname{Hom}_{\Lambda}(E_2,E_1)$ such that $FG-\operatorname{Id}\in B^{\infty}_{\Lambda}(E_2)$ and $GF-\operatorname{Id}\in B^{\infty}_{\Lambda}(E_1)$.
For a field of unbounded closed operators $T_x:H_1\longrightarrow H_2$ we say that it is $\Lambda$--Fredholm if the corresponding field $T_x:(\operatorname{Domain}(T_x),\|\cdot \|_{T_x})\longrightarrow H_2$ defined by the graph norm is $\Lambda$--Breuer--Fredholm.
\end{dfn}
\noindent It is straightforward to show that
a random operator $F\in \operatorname{Hom}_{\Lambda}(H_1,H_2)$ is $\Lambda$--Fredholm if and only if $N(F)$ is $\Lambda$--finite rank and there exist some finite rank projection $S\in \operatorname{End}_{\Lambda}(H_2)$ such that $R(\operatorname{Id}-S)\subset R(F).$
Then $\Lambda$--Fredholm operators $F$ have a finite $\Lambda$--index. In fact $\tru(N(F))<\infty$ and $\tru(1-R(F))\leq \tru(S)<\infty$ and one can define
$$\operatorname{ind}_{\Lambda}(F):=\tru(N(F))-\tru(1-R(F)).$$ 
\noindent 

\noindent To motivate the definition of the $\Lambda$--essential spectrum, as in \cite{Vai}
cosider a semifinite Von Neumann algebra $M$ with trace $\tau$, $S=S^*\in M$. One can define $\tau$--Breuer Fredholm operators exactly as in the definition  \ref{99990}.
The Borel functional calculus shows that $S$ is $\tau$--Breuer--Fredholm if and only if there exists $\epsilon>0$ such that $\tau(E(-\epsilon,\epsilon))<\infty$, where $E(\Delta)$ is the spectral projection of $S$ corresponding to a Borel set $\Delta$. Besides if $S=S^*$ is $\tau$--Breuer--Fredholm then $\operatorname{ind}_{\tau}S=0$.

\noindent So consider a measurable field $T$ of unbounded intertwining operators.  
 If $T$ is selfadjoint 
 (every $T_x$ is self--adjoint a.e.) the parametrized (measurable) spectral Theorem (cf. Theorem XIII.85 in \cite{Reed}) shows that for every bounded Borel function $f$ the family 
 $x\longmapsto f(T_x)$ is a measurable field of uniformly bounded  intertwining operators
 defining a unique random operator. In other words
$\{f(T_{x})\}_x\in \operatorname{End}_{\Lambda}(H).$
\noindent For a Borel set $U\subset \R$ let $\chi_T(U)$ be the family of spectral projections $\{\chi_{U}(T_x)\}_x$. Denote $H_T(U)$ the measurable field of Hilbert spaces corresponding to the family of the images $(H_T(U))_x=\chi_{U}(T)H_x$. The formula $$\mt(U):=\operatorname{tr}_{\Lambda}(\chi_T(U))=\operatorname{dim}_{\Lambda}(H_U(T))$$ defines a Borel measure on $\R$ such that \begin{equation}
\label{integrabo}
\int f d\mt=\operatorname{tr}_{\Lambda}(f(T)),\quad 
f:\R \longrightarrow [0,\infty) \textrm{ bounded and Borel}
.\end{equation}
We call $\mu_{\Lambda,T}$ the {$\Lambda$--spectral measure} of $T$.
Clearly this is not in general a Radon measure (i.e. finite on compact sets). In fact
due to the non--compactness of the ambient manifold a spectral projection of a relatively compact set of an (even elliptic) operator is not trace class. 
In the case of elliptic self adjoint operators with spectrum bounded by below this is the Lebesgue--Stiltijes measure associated with the spectrum distribution function relative to the $\Lambda$--trace. This is the non decreasing function $\lambda\longmapsto \operatorname{tr}_{\Lambda}\chi_{(-\ty,\lambda)}(T)$.
A good reference on this subject is the work of Kordyukov  \cite{koo}.
The proof of formula \eqref{integrabo} this fact easily follows starting from characteristic functions. Here the normality property of the trace plays a fundamental role. A detailed argument can be found in \cite{Peric}.
\noindent Next we introduce, inspired by \cite{Vai}  the \emph{main character} of this section.
\begin{dfn}
The essential $\Lambda$--spectrum of the measurable field of unbounded self--adjoint operators $T$ is
$$\spc(T):=\{\lambda \in \R: \mu_{\Lambda,T}(\lambda-\epsilon,\lambda+\epsilon)=\infty, \forall \epsilon>0\}.$$
\end{dfn}
\begin{lem}\label{compactness}
For Random operators the $\Lambda$--essential spectrum is stable under compact perturbation. If $A\in \operatorname{End}_{\Lambda}(E)$ is selfadjoint and $S=S^*\in B^{\infty}_{\Lambda}(E)$ then 
$$\operatorname{spec}_{\Lambda,e}(A+S)=\operatorname{spec}_{\Lambda,e}(A).$$ Then if $\tru$ is infinite i.e. 
$\tru(1)=\infty$ 
we have
$\operatorname{spec}_{\Lambda,e}(A)=\{0\}$
for every $A=A^*\in B^{\infty}_{\Lambda}(E).$ 
\end{lem}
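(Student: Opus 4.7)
The plan is to reduce the stability statement to the Breuer--Fredholm characterization of the essential spectrum of a self--adjoint random operator, and then use the standard Atkinson-type argument adapted to the $\Lambda$--compact ideal. The key observation, already noted in the paragraph preceding the definition of $\operatorname{spec}_{\Lambda,e}$, is that for a self--adjoint $T\in\vo$ one has $\lambda\notin\operatorname{spec}_{\Lambda,e}(T)$ if and only if $T-\lambda\operatorname{Id}$ is $\Lambda$--Breuer--Fredholm. This equivalence is immediate from the Borel functional calculus: the condition $\mu_{\Lambda,T}(\lambda-\epsilon,\lambda+\epsilon)<\infty$ for some $\epsilon>0$ is exactly $\operatorname{tr}_{\Lambda}\chi_{(-\epsilon,\epsilon)}(T-\lambda\operatorname{Id})<\infty$, which by the spectral theorem is equivalent to the existence of a parametrix modulo $B^{\infty}_{\Lambda}(E)$ (use $g(T-\lambda)$ with $g(x)=x^{-1}$ on $|x|\geq\epsilon$ and $g(x)=0$ elsewhere).

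Once this equivalence is available, I would prove stability of the Breuer--Fredholm property under perturbation by a (not necessarily self--adjoint) element of $B^{\infty}_{\Lambda}(E)$. Assume $A-\lambda\operatorname{Id}$ is $\Lambda$--Breuer--Fredholm and pick $G\in\vo$ with
\[
(A-\lambda)G-\operatorname{Id}=K_1,\qquad G(A-\lambda)-\operatorname{Id}=K_2,\qquad K_1,K_2\in B^{\infty}_{\Lambda}(E).
\]
Then $(A+S-\lambda)G=\operatorname{Id}+K_1+SG$ and $G(A+S-\lambda)=\operatorname{Id}+K_2+GS$, and since $B^{\infty}_{\Lambda}(E)$ is a two--sided $*$--ideal in $\vo$ (recalled in the text right after the definitions of $B^{f,1,2,\infty}_{\Lambda}$), the remainders $K_1+SG$ and $K_2+GS$ lie in $B^{\infty}_{\Lambda}(E)$. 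Hence $A+S-\lambda$ is $\Lambda$--Breuer--Fredholm. Swapping the roles of $A$ and $A+S$ (using that $-S\in B^{\infty}_{\Lambda}(E)$ as well) gives the reverse implication. Combining with the equivalence of the first paragraph yields $\operatorname{spec}_{\Lambda,e}(A+S)=\operatorname{spec}_{\Lambda,e}(A)$.

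For the second assertion, compute $\operatorname{spec}_{\Lambda,e}(0)$ directly. The spectral measure of the zero operator is $\mu_{\Lambda,0}=\operatorname{tr}_{\Lambda}(\operatorname{Id})\,\delta_0$; under the hypothesis $\operatorname{tr}_{\Lambda}(\operatorname{Id})=\infty$, one has $\mu_{\Lambda,0}(-\epsilon,\epsilon)=\infty$ for every $\epsilon>0$ and $\mu_{\Lambda,0}(\lambda-\epsilon,\lambda+\epsilon)=0$ for $|\lambda|$ bounded away from $0$, so $\operatorname{spec}_{\Lambda,e}(0)=\{0\}$. Applying the first part with $A=0$ and an arbitrary self--adjoint $S\in B^{\infty}_{\Lambda}(E)$ gives $\operatorname{spec}_{\Lambda,e}(S)=\{0\}$.

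The only genuinely non--routine step is the Atkinson characterization used above, namely that $T-\lambda$ is Breuer--Fredholm precisely when a spectral projection of $T$ around $\lambda$ is $\Lambda$--finite dimensional. One direction (Borel functional calculus produces a parametrix) is direct; the other direction, that a two--sided parametrix forces the spectral projection $\chi_{(-\epsilon,\epsilon)}(T-\lambda)$ to be $\Lambda$--finite rank, requires a short polar--decomposition / minimax argument in the semifinite setting, together with the ideal properties of $B^{\infty}_{\Lambda}(E)$ and the semicontinuity of $\operatorname{tr}_{\Lambda}$ under the lattice operations on $\mathcal{P}(\vo)$. This is where I would expect to spend most care; everything else is a transcription of the classical Weyl theorem to the measurable--field setting.
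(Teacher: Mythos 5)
Your proof is correct, but it follows a genuinely different route from the paper's. The paper argues directly in the style of the Weyl criterion: for $\lambda\in\operatorname{spec}_{\Lambda,e}(A)$ it intersects the $\Lambda$--infinite spectral subspace $H_{A}(\lambda-\epsilon,\lambda+\epsilon)$ with the $\Lambda$--finite--codimensional subspace $H_{S}(-\epsilon,\epsilon)$ (finite codimensionality being exactly $\Lambda$--compactness of $S$), obtaining a $\Lambda$--infinite subspace on which $\|(A+S-\lambda)t\|\leq 2\epsilon\|t\|$; comparison of projections then forces $\lambda\in\operatorname{spec}_{\Lambda,e}(A+S)$, and symmetry in $A\leftrightarrow A+S$ gives equality, the second assertion being immediate. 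You instead pass through the identification of $\mathbb{R}\setminus\operatorname{spec}_{\Lambda,e}(T)$ with the set of $\lambda$ for which $T-\lambda$ is $\Lambda$--Breuer--Fredholm, and then run the classical Atkinson argument using that $B^{\infty}_{\Lambda}(E)$ is a two--sided ideal. This is legitimate here because the paper itself records the semifinite characterization ($S=S^*$ is $\tau$--Breuer--Fredholm iff some $\tau(E(-\epsilon,\epsilon))<\infty$) just before the lemma, and your treatment of the second assertion via $\operatorname{spec}_{\Lambda,e}(0)=\{0\}$ when $\operatorname{tr}_{\Lambda}(1)=\infty$ is exactly right. What your route buys is a clean reduction to ideal algebra (and it shows stability of the Fredholm property under non--self--adjoint $\Lambda$--compact perturbations as a byproduct); what it costs is that the hard direction of the Atkinson characterization --- a parametrix forces $\operatorname{tr}_{\Lambda}\chi_{(-\epsilon,\epsilon)}(T-\lambda)<\infty$ --- is itself proved by the very mechanism the paper uses directly (a $\Lambda$--compact operator cannot be bounded below on a $\Lambda$--infinite projection, via comparison of projections in the lattice), so you have only sketched, not eliminated, that step; be aware the paper's subspace--intersection argument is also the one reused almost verbatim in the splitting principle, which is why the author proves the lemma that way.
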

\begin{proof}
Let $\lambda \in \operatorname{spec}_{\Lambda,e}(A)$, by definition $\operatorname{dim}_{\Lambda}H_A(\lambda-\epsilon,\lambda+\epsilon)=\infty$. Then consider the field of Hilbert spaces 
$$G_{\epsilon,x}:=\big{\{}t\in \chi_{(-\lambda-\epsilon,\lambda+\epsilon)}(A_x)H_x;\,\|S_xt\|< \epsilon \|t\|\big{\}}=H_{S_x}(-\epsilon,\epsilon)\cap H_{A_x}(-\lambda-\epsilon,\lambda+\epsilon).$$ 
This actually shows that $G_{\epsilon}
$ is $\Lambda$--finite dimensional in fact  
$H_{A_x}(-\lambda-\epsilon,\lambda+\epsilon)$ is 
$\Lambda$--infinite dimensional while $H_{S_x}(-\epsilon,\epsilon)$ is $\Lambda$--finite codimensional. This shows that $\lambda\in \operatorname{spec
}_{\Lambda,e}(A+S)$. The second statement is immediate.
\end{proof}
\noindent There is a spectral characterization of $\Lambda$--Fredholm random operators as expected from the definition of the $\Lambda$ essential spectrum.\begin{prop}
For a random operator $F\in \operatorname{Hom}_{\Lambda}(H_1,H_2)$ the following are equivalent
\begin{enumerate}
\item $F$ is $\Lambda$--Fredholm.
\item $0 \notin \spc(F^*F)$ and $0 \notin \spc(FF^*)$.
\item $0 \notin \spc\left(\begin{array}{cc}0 & F^* \\F & 0\end{array}\right)$
\item $N(F)$ is $\Lambda$--finite rank and there exist some finite rank projection $S\in \operatorname{End}_{\Lambda}(H_2)$ such that $R(\operatorname{Id}-S)\subset R(F).$
\end{enumerate} 
\end{prop}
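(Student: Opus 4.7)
\emph{Overall plan.} The equivalence $(1)\Leftrightarrow (4)$ has already been noted in the remark immediately preceding the statement, so only the spectral characterisations $(2)$ and $(3)$ remain to be linked to Fredholmness. My strategy is to pass through the self--adjoint symmetrisation $\tilde F:=\bigl(\begin{smallmatrix}0 & F^* \\ F & 0\end{smallmatrix}\bigr)$ acting on $H_1\oplus H_2$ and reduce everything to the spectral characterisation of Breuer--Fredholmness for self--adjoint random operators, which the discussion preceding the definition of $\spc$ has essentially already recorded.

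\emph{Step one: symmetrisation.} First I show that $F$ is $\Lambda$--Fredholm if and only if $\tilde F$ is. Using the block identifications $N(\tilde F)=N(F)\oplus N(F^*)$ and $R(\tilde F)=R(F^*)\oplus R(F)$, a block--form $\Lambda$--parametrix of $\tilde F$ decomposes into a pair of $\Lambda$--parametrices for $F$ and $F^*$ modulo $\Lambda$--compact remainders, and conversely any such pair assembles back into a parametrix of $\tilde F$. Since $F$ is $\Lambda$--Fredholm iff $F^*$ is (by taking adjoints of the parametrix identities), this step reduces to $2\times 2$ block bookkeeping; it is nevertheless the only substantive point in the argument and is where I expect the main obstacle to lie, in verifying carefully that the compact remainders behave correctly with respect to the direct--sum structure of $H_1\oplus H_2$.

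\emph{Step two: $(1)\Leftrightarrow (3)$.} Applied to the self--adjoint operator $\tilde F$, the observation already recorded just before the definition of $\spc$ says that a self--adjoint random operator $S$ is $\Lambda$--Breuer--Fredholm if and only if $\tru\chi_{(-\epsilon,\epsilon)}(S)<\infty$ for some $\epsilon>0$, which by \eqref{integrabo} is the same as $\mu_{\Lambda,S}(-\epsilon,\epsilon)<\infty$, i.e.\ $0\notin\spc(S)$. Combined with step one this immediately gives $(1)\Leftrightarrow (3)$.

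\emph{Step three: $(2)\Leftrightarrow (3)$.} Here I use $\tilde F^2=\operatorname{diag}(F^*F,FF^*)$ together with the measurable spectral theorem to write
\begin{equation*}
\chi_{(-\epsilon,\epsilon)}(\tilde F)=\chi_{[0,\epsilon^2)}(\tilde F^2)=\operatorname{diag}\bigl(\chi_{[0,\epsilon^2)}(F^*F),\,\chi_{[0,\epsilon^2)}(FF^*)\bigr),
\end{equation*}
and take $\tru$ to obtain $\mu_{\Lambda,\tilde F}(-\epsilon,\epsilon)=\mu_{\Lambda,F^*F}[0,\epsilon^2)+\mu_{\Lambda,FF^*}[0,\epsilon^2)$. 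The left--hand side is finite for some $\epsilon>0$ if and only if both summands are, which is exactly $(2)\Leftrightarrow (3)$.
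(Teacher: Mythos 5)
Your proof is correct. For comparison, the paper gives no proof of this proposition at all: it is stated as the expected consequence of the two facts recorded just before it, namely the remark that $\Lambda$--Fredholmness is equivalent to condition (4) and the assertion that a self--adjoint element $S$ of a semifinite von Neumann algebra is $\tau$--Breuer--Fredholm precisely when $\tau(E(-\epsilon,\epsilon))<\infty$ for some $\epsilon>0$; your three steps assemble exactly these ingredients, so your argument is the natural completion of what the paper leaves implicit. The details all check out: the off--diagonal blocks of any parametrix of the symmetrisation $\tilde F$ are parametrices of $F$ and $F^*$, and conversely the antidiagonal matrix built from a parametrix $G$ of $F$ and $G^*$ is a parametrix of $\tilde F$, the only point to verify being that block compressions and assemblies preserve $\Lambda$--compactness, which is immediate from the finite--rank definition and norm density; likewise the identity $\chi_{(-\epsilon,\epsilon)}(\tilde F)=\operatorname{diag}\bigl(\chi_{[0,\epsilon^2)}(F^*F),\chi_{[0,\epsilon^2)}(FF^*)\bigr)$, the additivity of $\operatorname{tr}_{\Lambda}$ on block--diagonal operators, and the use of \eqref{integrabo} are all valid, and positivity of $F^*F$ and $FF^*$ makes the intervals $(-\epsilon,\epsilon)$ and $[0,\epsilon^2)$ interchangeable, so $(2)\Leftrightarrow(3)$ follows as you say.
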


\subsection{The splitting principle}
For elliptic operators, the stability of the spectrum under compact perturbations leads to an useful tool.
For every $x\in X$ and integer $k$ consider the Sobolev space $H^k(L_x,E)$ of sections of $E$, obtained by completion of 
$C^{\infty}_c(L_x,E)$ with respect to the $k$ Sobolev norm defined in terms of the longitudinal Levi Civita connection
$$\|s\|^2_{H^k(L_x;E)}:=\sum_{i=0}^k\|\nabla^k s\|_{L^2(\otimes^k T^*L_x;E)}^2.$$ This is the definition of a Borel field of Hilbert spaces with natural Borel structure given by the inclusion into 
$L^2$. In fact, by Proposition 
$4$ of Dixmier \cite{Dix} p.167 to prescribe a measure structure on a field of Hilbert spaces 
$H$ it is enough to give a countable sequence 
$\{s_j\}$ of sections with the property that for 
$x\in X$ the countable set 
$\{s_j(x)\}$ is complete orthonormal. 
In the appendix of the paper by Heitsch and Lazarov \cite{hl} is shown, making use of holonomy that a family with the property that each $s_j$ is smooth and compactly supported on each leaf can be choosen.
\begin{dfn}
\noindent Consider a field $T=\tx$ of continuous intertwining operators
\\ \noindent $T_x:\cic \longrightarrow \ci$. 

 \noindent  We say that $T$ is of order $k\in \mathbb{Z}$ if $T_x$ extends to a bounded operator 
  $H^m(L_x,E_{|Lx})\longrightarrow H^{m-k}(L_x,E_{|Lx})$ for each $m\in \mathbb Z$ and for $x$ a.e.

\noindent  We say that the $T$ is elliptic if each $T_x$ satisfies a G\"arding type inequality
$$\|s\|_{H^{m+k}_x}\leq C(L_x,m,k)[\|s\|_{H_x^m}+\|T_xs\|_{H_x^m}],$$
and the family $\{C(L_x,m,k)\}_{x\in X}$ is bounded outside a null set in $X$.
\end{dfn}
\noindent Since each leaf $L_x$ is a manifold with bounded geometry, for a family of elliptic selfadjoint intertwining operators $\tx$ every $T_x$ is essentially selfadjoint \cite{Vai} with domain $H^k(L_x;E_{|L_x})$. It makes sense again to speak of measurability of such a family.
\begin{dfn}\label{eqout}For two fields of operators $P$ and $P'$ say that $P=P'$ outside a compact $K\subset X$ if for every leaf $L_x$ and every section $s\in C^{\infty}_c(L_x\setminus K;E)$ then $Ps=P's$. This property holding $x$ a.e in $X$ with respect to the standard Lebesgue measure class.
\end{dfn}

\begin{thm}\label{1}{The splitting principle.} Let $P$ and $P'$ two Borel fields of (unbounded) selfadjoint order $1$ elliptic intertwining operators. If $P=P'$  outside a compact set $K\subset X$ then
$$\spc(P)=\spc(P').$$
\end{thm}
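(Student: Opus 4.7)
The plan is to reduce the statement to the compactness Lemma (stability of $\spc$ under $\Lambda$--compact perturbations) by comparing bounded transforms of the two random self--adjoint operators.

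First, since $t\longmapsto (t-i)^{-1}$ is a homeomorphism of $\R$ onto the punctured circle $\{|z-\tfrac{1}{2i}|=\tfrac{1}{2}\}\setminus\{0\}$, the measurable functional calculus gives $\mu_{\Lambda,(P-i)^{-1}}=(t\longmapsto (t-i)^{-1})_*\mu_{\Lambda,P}$, and similarly for $P'$. Hence $\lambda\in \spc(P)$ iff $(\lambda-i)^{-1}\in \spc((P-i)^{-1})$, and the claim is equivalent to
\begin{equation*}
\spc((P-i)^{-1})=\spc((P'-i)^{-1}).
\end{equation*}
By the compactness Lemma, this equality is implied by $R:=(P-i)^{-1}-(P'-i)^{-1}\in \binf$.

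Next, I would write the resolvent identity. Since $P$ and $P'$ are order $1$ leafwise differential operators coinciding outside $K$ in the sense of Definition \ref{eqout}, the difference $P-P'$ is leafwise a first order differential operator whose coefficients are supported in $K$. Choosing $\chi\in \cc$ with $\chi\equiv 1$ on a neighborhood of $K$ we have $P-P'=\chi(P-P')\chi$ leafwise, and therefore
\begin{equation*}
R=(P-i)^{-1}(P'-P)(P'-i)^{-1}=\big{[}(P-i)^{-1}\chi\big{]}(P'-P)\big{[}\chi(P'-i)^{-1}\big{]}.
\end{equation*}
By ellipticity, $(P'-i)^{-1}$ is leafwise bounded $L^{2}(L_x,E)\to H^{1}(L_x,E)$ (uniformly in $x$ by bounded geometry); the middle factor is bounded $H^{1}\to L^{2}$; and $(P-i)^{-1}\chi$ is bounded on $L^2$. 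Thus it suffices to establish a foliated Rellich statement: the random operator $\chi(P'-i)^{-1}:L^{2}(E)\to L^{2}(E)$ lies in $\binf$, whence $R\in \binf$ as a product of a $\Lambda$--compact by a bounded random operator.

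The hard step will be precisely this foliated Rellich property, because on a generic leaf $L_x$ the restriction $\chi\mid_{L_x}$ has non compact support (leaves may wind densely through $\operatorname{supp}\chi$), so ordinary leafwise compactness via Rellich--Kondrachov is not enough. The argument I would use relies on the local trace construction of $\tru$: approximate $(t-i)^{-1}$ uniformly on $\spec(P')$ by Schwartz functions $f_n$, so that $\chi(P'-i)^{-1}$ is a norm limit of $\chi f_n(P')$; each $\chi f_n(P')$ has a jointly smooth leafwise Schwartz kernel, and restricting this kernel to $\operatorname{supp}\chi\times\operatorname{supp}\chi$ produces a finite local trace on each foliated chart. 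Integrating these local traces against the holonomy invariant transverse measure $\Lambda$ yields $\tru|\chi f_n(P')|<\infty$, so each approximant is in $\buno\subset \binf$. Passing to the norm limit gives $\chi(P'-i)^{-1}\in \binf$, which closes the argument and produces $\spc(P)=\spc(P')$.
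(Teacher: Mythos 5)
Your route is genuinely different from the paper's: you want to deduce the theorem from ``compact difference of resolvents implies equal essential spectra'', whereas the paper argues directly, transporting the $\Lambda$--infinite dimensional approximate spectral subspace $\chi_{(\lambda-\epsilon,\lambda+\epsilon)}(P)$ to $P'$ by means of cutoffs, uniform G{\aa}rding estimates and a $\Lambda$--trace class auxiliary operator $C_\psi^*C_\psi$. The problem is that your very first reduction is not covered by what is available. Lemma \ref{compactness} is stated and proved only for a \emph{self-adjoint} random operator $A$ perturbed by a \emph{self-adjoint} $S\in\binf$, and the objects $\mu_{\Lambda,T}$ and $\spc(T)$ are defined in the paper only for measurable fields of self-adjoint operators. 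The resolvents $(P-i)^{-1}$ and $(P'-i)^{-1}$ are normal but not self-adjoint, and their difference $R$ is not self-adjoint either, so the statement $\spc((P-i)^{-1})=\spc((P'-i)^{-1})$ is neither an instance of the lemma nor even a defined assertion in this framework. To make your scheme work you must either extend the definition of the $\Lambda$--essential spectrum and the stability lemma to normal random operators, or pass through self-adjoint substitutes (for instance prove $g(P)-g(P')\in\binf$ for real bump functions $g$, or compare the bounded transforms $P(1+P^2)^{-1/2}$ and $P'(1+P'^2)^{-1/2}$); none of these is mere bookkeeping, and it is precisely the work the paper's hands-on argument replaces.

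The ``foliated Rellich'' step is also mis-justified as written. The leafwise kernel of $\chi f_n(P')$ is $\chi(y)k_n(y,z)$: it is localized only in the first variable, not in $\operatorname{supp}\chi\times\operatorname{supp}\chi$, and the absolute value $|\chi f_n(P')|$ is not localized at all, so finiteness of $\tru|\chi f_n(P')|$ does not follow from local traces over a foliation chart; on the non-compact leaves a one-sided cutoff of a smoothing operator is not trace class without a genuine off-diagonal decay argument. What you actually need, and can get, is the weaker $\Lambda$--Hilbert--Schmidt property: by the trace property $\tru\big((\chi f_n(P'))^*(\chi f_n(P'))\big)=\tru\big(\chi\,|f_n|^2(P')\,\chi\big)$, and the right-hand side is finite by exactly the two-sided localized local-traceability argument (uniform G{\aa}rding constants, Sobolev embedding, Moore--Schochet local traces, integration of the compactly supported diagonal density against $\Lambda$) that the paper carries out for $C_\psi^*C_\psi$; since $\bdue\subset\binf$ this yields $\Lambda$--compactness of $\chi f_n(P')$ and hence of $\chi(P'-i)^{-1}$. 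Finally, your identity $P-P'=\chi(P-P')\chi$ presupposes locality of the operators, which is not among the hypotheses (they are only order-one elliptic intertwining fields); from Definition \ref{eqout} one only gets $(P-P')=(P-P')\chi$, which is still sufficient if you group the factors as $\big[(P-i)^{-1}(P'-P)\big]\big[\chi(P'-i)^{-1}\big]$ and use the order-one bound $L^2\to H^{-1}$ together with the resolvent bound $H^{-1}\to L^2$. So the strategy is salvageable, but both the reduction lemma and the compactness step must be proved rather than cited.
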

\begin{proof}
Let $\lambda \in \spc(P)$, for each $\epsilon>0$ put $\chi^{\lambda}_{\epsilon}:=\chi_{(\lambda-\epsilon,\lambda+\epsilon)}$ and $G_{\epsilon}:=\chi_{\epsilon}^{\lambda}(P)$, then $\tru(G_{\epsilon})=\infty$. The projection $G_{\epsilon}$ amounts to the Borel field of projections 
$\{\chi_{\epsilon}^{\lambda}(P_x)\}_{x\in X}.$ 
By elliptic regularity on each Hilbert space $G_{\epsilon,x}$ every Sobolev norm is equivalent in fact the spectral theorem and G{\aa}rding inequality show that for $s\in G_{\epsilon,x}$ and $k\in \mathbb{N}$
$$\|s\|_{H^{k+2}_x}\leq C(P_1,k+2)\{\|s\|_{L^2_x}+\|(P_1-\lambda)^ks\|_{L^2_x}\}\leq (C+\epsilon^k)\|s\|_{L^2_x}$$ where $C(P_1,k+2)$ is a constant bigger than each leafwise G{\aa}rding constant. 

\noindent Now choose two cut--off functions $\phi, \psi \in \cc$ with 
$\phi_{K}=1$ and $\psi_{|{\operatorname{supp}\phi}}=1.$ Consider the  fields of operators: 
$
{\xymatrix{B_{\phi}:L^2_x \ar[r]^{\chi_{\lambda}^{\epsilon}} &  G_{\epsilon,x} \ar[r]^{\phi} &L^2_x,
}}$ and

\noindent $
{\xymatrix{C_{\psi}:L^2_x\ar[r]^{\chi_{\lambda}^{\epsilon}}&(G_{\epsilon,x},\|\cdot\|_{L^2}) \ar[r]&(G_{\epsilon,x},\|\cdot\|_{H^k})\ar[r]^{\psi} &  H^1_x
}}$for a $k$ sufficiently big in order to have the Sobolev embedding theorem to hold.
\noindent We declare that $C^*_{\psi}C_{\psi}\in \vo$ is 
$\Lambda$--compact. In fact consider by simplicity the case in which $\psi$ is supported in a foliation chart 
$U\times T$. The integration process shows that the trace of 
$C^*_{\psi}C_{\psi}$ is given by the integration on 
$T$ of the local trace on each plaque 
$U_t=U \times \{t\}.$ Now the operator 
$C^*_{\psi,x}C_{\psi,x}$ is locally traceable by Theorem 1.10 in Moore and Schochet \cite{MoSc} since by the Sobolev embedding the range of $C_{\Psi}$ is made of continuous sections (the fact that each Sobolev norm is equivalent on $G_{\epsilon}$ makes the teorem appliable i.e don't care in forming the adjoint w.r.t. $H^1$ norm or $L^2$). These local traces are uniformly bounded in $U \times T$ from the uniformity of the G{\aa}rding constants for the family since we are multiplying by a compactly supported function $\psi$. 
Actually we have shown that $C_{\psi}^*C_{\psi}$ is $\Lambda$--trace class.
There follows from Lemma \ref{compactness} that the projection $\widetilde{G}_{\epsilon}:=\chi_{(-\epsilon^2,\epsilon^2)}(C^*_{\psi}C_{\psi})$ is $\Lambda$--infinite dimensional in fact $\spc(C^*_{\psi}C_{\psi})=\{0\}.$
\noindent Now $1-B_{\phi}$ is $\Lambda$--Fredholm  since $B_{\phi}$ is $\Lambda$--compact then its kernel has finite $\Lambda$--dimension. Also since $C^*_{\psi}C_{\psi}\chi_{\epsilon}^{\lambda}=C^*_{\psi}C_{\psi}$ then $\widetilde{{G}_{\epsilon}}\chi_{\epsilon}^{\lambda}=\widetilde{G}_{\epsilon}$ and $(1-B_{\phi})\widetilde{G}_{\epsilon}=(1-\phi)\widetilde{G}_{\epsilon}\subset \operatorname{domain}(P')$ is $\Lambda$--infinite dimensional.
\noindent Now take $s\in \widetilde{G}_{\epsilon}$, from the definition
$\|\psi s\|^2_{H^1}=\langle C_{\psi}s,C_{\psi}s\rangle_{H^1}=\langle C^*_{\psi}C_{\psi}s,s\rangle_{L^2}\leq \epsilon^2 \|s\|^2_{L^2}$ then
from the identity $(P'-\lambda)(1-\phi)s=-([P,\phi]+(1-\phi)(P-\lambda))s$ it follows that
$$\|(P'-\lambda)(1-\phi)s\|_{L^2}\leq C\|\psi s\|_{H^1}+\|(P-\lambda)s\|_{L^2}\leq \epsilon (1+C)\|s\|_{L^2}.$$
Finally the spectral theorem for (unbounded) self adjoint operators shows that $(1-\phi)\widetilde{G}_{\epsilon}\subset \chi_{(\sigma,\tau)}(P')$ with  
$\sigma=\lambda-\epsilon(1+C),\tau=\lambda+\epsilon(1+C)$. In particular $\lambda \in \spc(P')$.
\end{proof}
\begin{cor}
Consider two foliated manifolds $X$ and $Y$ with cylindrical ends or, more generally with bounded geometry with holonomy invariant measures $\Lambda_1$, $\Lambda_{2}$ and   bounded geometry vector bundles $E_1\longrightarrow X$ and $E_2\longrightarrow Y$. Suppose there exist compact sets $K_1\subset X$ and $K_2\subset Y$ such that outside $X\setminus K_1$ and $Y\setminus K_2$ are isometric with an isometry that identifies every geometric structure as the bundles and the foliation with the transverse measure. If $P$ and $P'$ are operators as in Theorem \ref{1} with $P=P'$ on $X\setminus K_1 \simeq Y\setminus K_2$ in the sense of definition \ref{eqout} then
$$\operatorname{spec}_{\Lambda_1,e}(P)=\operatorname{spec}_{\Lambda_2,e}(P').$$
\end{cor}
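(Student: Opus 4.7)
The plan is to reduce this to Theorem \ref{1} by transporting its construction across the given isometry; by symmetry it suffices to establish the inclusion $\operatorname{spec}_{\Lambda_1,e}(P)\subseteq \operatorname{spec}_{\Lambda_2,e}(P')$.

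Fix $\lambda\in\operatorname{spec}_{\Lambda_1,e}(P)$ and $\epsilon>0$. First I would rerun verbatim the construction inside the proof of Theorem \ref{1} on $X$ alone, choosing cutoffs $\phi,\psi\in C^\infty_c(X)$ with $\phi\equiv 1$ on $K_1$ and $\psi\equiv 1$ on $\operatorname{supp}\phi$. The resulting projection $\widetilde{G}_\epsilon:=\chi_{(-\epsilon^2,\epsilon^2)}(C_\psi^*C_\psi)$ is $\Lambda_1$--infinite dimensional, and the subspace $(1-\phi)\widetilde{G}_\epsilon\subset L^2(X;E_1)$ provides a $\Lambda_1$--infinite family of sections supported in $X\setminus K_1$ which satisfy the approximate--eigenvector estimate
$$\|(P-\lambda)(1-\phi)s\|_{L^2}\leq\epsilon(1+C)\|s\|_{L^2}$$
with $C$ depending only on the cutoffs and on $P$. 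The point is that this whole construction is "local at infinity" and is blind to what happens inside $K_1$.

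Next I would push these sections to $Y$ via the isometry $\Phi\colon X\setminus K_1\longrightarrow Y\setminus K_2$. Because $\Phi$ identifies every geometric datum --- leafwise metric, Clifford bundle, connection, foliation and transverse measure --- it induces a unitary $U$ on leafwise $L^2$--sections that intertwines the restrictions of $P$ and $P'$ to the common end, and a measurable isomorphism of the associated random Hilbert fields preserving the $\Lambda$--dimension of random projections supported outside $K_1$. Setting $W:=U((1-\phi)\widetilde{G}_\epsilon)\subset L^2(Y;E_2)$, the subspace $W$ is $\Lambda_2$--infinite dimensional, supported outside $K_2$, and inherits the bound $\|(P'-\lambda)t\|_{L^2}\leq\epsilon(1+C)\|t\|_{L^2}$ for $t\in W$. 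The measurable spectral theorem for $P'$ then places $W$ inside the spectral projection of $P'$ corresponding to $(\lambda-\epsilon(1+C),\lambda+\epsilon(1+C))$, and since $\epsilon$ was arbitrary one concludes $\lambda\in\operatorname{spec}_{\Lambda_2,e}(P')$.

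The main step that will require careful verification is that $U$ really preserves $\Lambda$--dimension of random projections. This reduces to showing that the pushforward under $\Phi$ of the local trace of a measurable field of projections on the leaves of $\mathcal{F}_1|_{X\setminus K_1}$ coincides with the local trace of the pushed--forward field on leaves of $\mathcal{F}_2|_{Y\setminus K_2}$, and then that the pairing of these local traces with the respective transverse measures agrees. Given the hypothesis that $\Phi$ matches $\Lambda_1$ and $\Lambda_2$ outside $K_1$, this is essentially a change of variables inside the construction of $\operatorname{tr}_\Lambda$ recalled in section 4; once this compatibility is spelled out, the remainder is a direct transcription of the argument of Theorem \ref{1}.
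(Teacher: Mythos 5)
Your proposal is correct and follows essentially the same route as the paper: the paper's own proof simply repeats the argument of Theorem \ref{1} verbatim up to the construction of $(1-\phi)\widetilde{G}_{\epsilon}$ and then regards this subspace, via the fixed structure-preserving isometry of the ends, as sitting in $\operatorname{End}_{\Lambda_2}(E_2)$, exactly as you do when you transport it by $U$ and invoke the spectral theorem for $P'$. Your added remark that the isometry preserves $\Lambda$--dimensions (since it matches the transverse measures and local traces outside the compacts) is precisely the point the paper leaves implicit in the phrase ``through the fixed isometry''.
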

\begin{proof}
The proof of \ref{1} can be repeated word by word till the introduction of the element $(1-\phi)\tilde{G}_{\epsilon}$ that can be considered as an element of $\operatorname{End}_{\Lambda_2}(E_2)$ through the fixed isometry.
\end{proof}
\section{Analysis of the Dirac operator}\label{finitt}

\subsection{Finite dimensionality of the index problem}

\noindent Consider the leafwise Dirac operator $D$. Its measurability property is addressed in \cite{Rama} where is showen to be equivalent  
to that of the field of bounded operators $(D+i)^{-1}:L^2\longrightarrow H^1$ on the field of natural Sobolev spaces. 
\noindent Remember that $D$ is odd with respect to the chiral grading. The operator $D^+$ is called the longitudinal chiral Dirac operator. We shall continue to denote with $D^+$ its unique $L^2$ closure.
 In general this is not a Breuer--Fredholm operator. In fact Fredholm properties are governed by its behavior at the boundary i.e its restriction to the base of the cylinder $\partial X_0$. In the one leaf situation $D^+$ is Fredholm in the usual sense if and only if $0$ is not in the continuous spectrum of $D^-D^+$  or equivalently if the continuous spectrum has a positive lower bound \cite{Me}.
However what is still true in this case is that the $L^2$ kernels of $D^+$ and $D^-$ are finite dimensional and made of smooth sections. The difference $$\operatorname{dim}_{\Lambda}\operatorname{Ker}_{L^2}(D^+)-\operatorname{dim}_{\Lambda}\operatorname{Ker}_{L^2}(D^-)$$ is by definition the $L^2$--chiral index of $D^+$. It gives the usual Fredholm index when the operator is Fredholm. Notice that in the non Fredholm case the $L^2$ index is not stable under compactly supported perturbations.
\noindent We are going to show that in our foliation case the chiral index problem is $\Lambda$--finite dimensional in the following sense: 

\noindent $\bullet$ By the parametrized measurable spectral theorem  the projections on the $L^2$--kernels of $D^{\pm}$ belong to the Von Neumann algebras of the corresponding bundles, 
$\chi_{\{0\}}(D^+)\in \operatorname{End_{\Lambda}(E^{\pm})}$ and
decompose as a Borel family of bounded operators $\{\chi_{\{0\}}(D^{\pm})_x\}_x$ amounting to the projections on the $L^2$ kernels of $D^{\pm}_x$. Furthermore they are implemented by a Borel family of uniformly smoothing Schwartz kernels.

\noindent $\bullet$ The family of projections above give rise to a longitudinal measure on the foliation. These measure are the local traces 
$U\longmapsto \operatorname{tr}_{L^2(L_x)}[\chi_U\cdot \chi_{\{0\}}(D^{\pm})_x \cdot \chi_U]$ where for a Borel $U\subset L_x$ the operator $\chi_U$ acts on $L^2(L_x)$ by multiplication.
In terms of the smooth longitudinal Riemannian density these measures are represented by the pointwise traces of the leafwise Schwartz kernels. We prove that these local traces has the following finiteness property completely analog to the Radon property for compact foliated spaces.
\bigskip

\noindent {\bf{Finiteness property for local traces of projections on the kernel.}} 

\noindent Consider a leaf $L_x$. This is a bounded geometry manifold with a cylinder $\partial L_x \times \R^+$. We claim that for every compact $K\subset \partial L_x$ 
\begin{equation}\label{latracciaefinita} \operatorname{tr}_{L^2(L_x)}[\chi_{K\times \R^+}\cdot \chi_{\{0\}}(D^{\pm})_x \cdot \chi_{K\times \R^+}]<\ty\end{equation} Since this list of items is aimed to the definition of the index, the (rather long) proof of inequality \eqref{latracciaefinita} is postponed immediately after. We limit ourselves here to say that is the relevant form of \emph{elliptic regularity} in our situation.

\noindent $\bullet$ The integration process of a longitudinal measure against a transverse holonomy invariant measure immediately shows that the integrability condition above is sufficient to assure finite $\Lambda$--dimensionality of the $L^2$ kernels of $D^{\pm}$. Indeed it is sufficient to choose an interior transversal $S_1$ and a boundary transversal $S_2$ i.e. a transversal contained into $\partial X_0$ and apply the usual integration
displaying $X$ as measure--theoretically fibering over the union $S_1\cup S_2$. Now the integral has two terms. The second integral, on $S_2$ is finite thanks to the finiteness property above in fact the situation here is a fibered integral of a standard Radon measure on the base times a finite measure. The interior term is finite thanks to proposition 4.22 in \cite{MoSc}.

\noindent $\bullet$ Finally 
 the chiral $\Lambda$--$L^2$--index can be defined to be the real number $$\operatorname 
{Ind}_{L^2,\Lambda}(D^+):=\tru (\chi_{\{0\}}(D^+))-\tru (\chi_{\{0\}}(D^-))\in \R.$$

\noindent {\bf{Proof of finiteness property of the local trace of kernel projections}}
\begin{proof}\label{2344}
It is clear that it suffices to prove the property for each operator $(\cdot)_{|\partial_x \times \R^+}\chi_{\{0\}}(D^+_x)$. 
Let us consider the operator $D^+$ on a fixed leaf $L_x$. This is a bounded geometry manifold with a cylindrical end $\partial L_x\times \R^+=\{y\in L_x:r(y)\geq 1\}$ where
the operator can be written in the form $B+\partial/{\partial t}$
acting on sections of $F\longrightarrow \partial L_x\times \R^+$.
The boundary operator $B$ is essentially selfadjoint on $L^2(\partial L_x;F)$ on the complete manifold $\partial{L_x}$ (see \cite{Ch} and \cite{ChGrTa} for a proof of self--adjointness using finite propagation speed tecniques). 

\noindent We are going to remind the Browder--G{\aa}rding type generalized eigenfunction expansion for $B$ (see \cite{Di} 11, 300--307, \cite{DuSc} and \cite{Rama} for an application to a A.P.S foliated and Galois covering index problem). 

\noindent According to Browder--G{\aa}rding there exist
\begin{enumerate}
\item a sequence of smooth sectional maps 
$e_j:\R \times \partial L_x \longrightarrow {F}$ i.e. $e_j$ is measurable and for every $\lambda \in \R$, 
$e_j(\lambda,\cdot)$ is a smooth section of ${F}$ over $\partial L_x$ such that
$Be_j(\lambda,x)=\lambda e_j(\lambda,x).$

\item a sequence of measures $\mu_j$ on $\R$ such that the map
$V:C^{\infty}_c(\partial L_x;F)\longrightarrow \bigoplus_j L^2(\R,\mu_j)$ defined by 
$(Vs)_j(\lambda)=\langle s,e_j(\lambda,\cdot) \rangle_{L^2(\partial L_x)}$ (use the Riemannian density) extends to an Hilbert space isometry 
$V:L^2(\partial L_x; F)\longrightarrow \bigoplus_j L^2(\R,\mu_j)=:\mathcal{H}_B$ which intertwines Borel spectral functions $f(B)$ with the operator defined by multiplication by $f(\lambda)$ with domain 
$\operatorname{dom}f(B)=\Big \{ s:\sum_j \int_{\R}|f(\lambda)|^2|(Vs)_j(\lambda)|^2d\mu_j(\lambda)<\infty \Big \}.$ 
\end{enumerate}
\noindent Notice that $e_j(\lambda,\cdot)$ need not be square integrable on $L_x$. Taking tensor product with $L^2(\R)$ we have the isomorphism \begin{equation}\label{is}L^2(\partial L_x \times \R^+,F) \simeq L^2(\partial L_x,{F})\otimes L^2(\R) \xrightarrow{\sim} [\oplus_j L^2(\R,\mu_j)]\otimes L^2(\R^+)={\mathcal{H}}_B\otimes L^2(\R^+)\end{equation} where $R^+=(0,\infty)_r$. 
\noindent Under the identification $W:=V \otimes \textrm{Id}$ the operator $D^+$ is sent into $\lambda + \partial_r$ acting on ${\mathcal H}_B\otimes L^2({\R^+})$.
\noindent Now let $s$ be an $L^2$--solution of $D_xs=0$. By elliptic regularity it restricts to the cylinder as an element  \\ \noindent $s(x,r)\in C^{\infty}(\R^+,H^{\infty}(\pal;F))\cap L^2(\R^+;L^2(\pal,F))$ solving $(\partial_r+B)s=0$. Then, from a straightforward computation 
$
\partial_r (Vs)_j(\lambda,t)=-\lambda (Vs)_j(\lambda,r).$
That's to say, all the $L^2$ solutions of $D^+=0$ under the representation $V$ on the cylinder are zero $\lambda\leq 0,\, \mu_j(\lambda)$--a.e.  for every $j$. 
Decompose, for fixed $a>0$
\begin{equation}\label{decomp}L^2(\pal \times \R^+; F)=L^2(\R^+;\mathcal{H}_B([-a,a]))\oplus L^2(\R^+,\mathcal{H}_B(\R\setminus [-a,a]))\end{equation} 
where the notation is $\mathcal{H}_B(\Delta)$ for the range of the spectral projection associated to $\chi_{\Delta}$.
Let $\Pi_{\leq a}$ and $\Pi_{>a}$ respectively be the hortogonal projections corresponding to \eqref{decomp}. Let $\pkp$ be the $L^2$ projection on the kernel. There is a composition $\Pi^a:={ \Pi_{\leq a}} \circ(\cdot)_{|\pal\times \R^+}\circ \pkp$ defined through 
$
{
\xymatrix{ L^2(L_x) \ar[r] & \operatorname{Ker}_{L^2}(D_x^+) \ar[r]&L^2(\partial L_x\times \R^+) \ar[r]&L^2(\R^+;\mathcal{H}_{B}([-a,a])).
}}$ With the Browder--Garding expansion one can see that the elements $\xi$ belonging to the space $\Pi^aL^2(L_x)$ are of the form \begin{equation}\label{sha}\xi=\chi_{(0,\infty)}(\lambda)e^{-\lambda t}\zeta_0\end{equation} with $\zeta_0=\zeta_{0j} \in H^{\infty}(\pal;{F})$ to be univocally determined using boundary conditions. Formula \eqref{sha} allows to define\footnote{this is clearly inspired by Melrose definition \cite{Me} Chapter 6} the "boundary datas" mapping 
$
\textrm{BD}:\Pi^aL^2(L_x;F)\longrightarrow \mathcal{H}_B((0,a]),$

\noindent $W^{-1}(\chi_{(0,a]}(\lambda)\zeta_0 e^{-\lambda t})\longmapsto W^{-1}(\chi_{(0,a]}(\lambda)\zeta_0).
$
This is continuous and injective in fact injectivity is obvious while continuity follows at once from the simple estimate \cite{Vai}
\begin{equation}\label{74328}
\|\xi\|_{L^2(\pal\times \R^+)}
=1/(2a)\|\chi_{[-a,a]}\zeta_0 \|_{\mathcal{H}_B}.
\end{equation}
\noindent Now choose an orthonormal basis $s_m=f_m\otimes g_m \in L^2(\partial L_x\times \R^+,{F})$ and a boundary compact set $A\subset \partial L_x$, then put $\chi_{A^{\square}}=\chi_{A\times (0,\infty)}(x,r)$. Consider the operator $\chi_{A^{\square}}\Pi^{a}\chi_{A^{\square}}$ acting on $L^2(L_x;F)$. Notice that $\Pi^a$ acts on $s_m$ via the natural embedding $L^2(\partial L_x)\subset L^2(L_x)$ then
\begin{equation}\label{trac}\textrm{tr}(\chi_{A^{\square}}\Pi^{a}\chi_{A^{\square}})=
\sum_m\langle \chi_{A^{\square}}\Pi^{a}\chi_{A^{\square}} s_m,s_m\rangle_{L^2(\pal\times \R^+)}.\end{equation}
Write $\textrm{BD}[\Pi^a \chi_{A^{\square}} s_m]=W^{-1}[\chi_{(0,a]}(\lambda)\zeta_0^{(m)}]$ hence $[\Pi^a \chi_{A^{\square}} s_m]=\chi_{(0,a]}(\lambda)\zeta_0^{(m)}e^{-\lambda t}.$ 
The sequence 
$\chi_{(0,a]}\zeta_0^{(m)}$ is bounded by \eqref{74328}. Then \eqref{trac} becomes
\begin{equation}\label{conint}
\textrm{tr}(\chi_{A^{\square}}\Pi^a\chi_{A^{\square}})=
\sum_m \int_{\R^+}\int_{\R \times \mathbb{N}}\chi_{(0,a]}(\lambda)\zeta_0^{(m)}e^{-\lambda t}\overline{\Big\{{W(\chi_{A^{\square}}s_m)}\Big\}}d\mu({\lambda})dt
\end{equation} where $\mu$ is the direct sum of the $\mu_j$'s.
\noindent The Last term of equation above can be seen to be equal to\noindent $$
W(\chi_{A^{\square}}s_m)W(\chi_{A^{\square}}f_m\otimes g_m)=V(\chi_A(x)f_m(x))g_m(t)$$ but this can be estimated, using Cauchy--Schwartz and the simple trick
 $e^{-(\lambda-a)}e^{-a}=e^{-\lambda}$ by
\begin{equation*}
 \sum_m C\Big\{\int_{\R \times \mathbb{N}}\chi_{(0,a]}|V(\chi_A f_m)|^2d\mu(\lambda)dr\Big\}^{1/2}\leq C\sum_m \langle \chi_A \mathcal{H}_B((0,a])\chi_A f_m,f_m\rangle. \end{equation*}  Finally this is equal to
$C \textrm{tr}( \chi_A \mathcal{H}_B((0,a])\chi_A) <\infty,$
In fact  $\mathcal{H}_B((0,a])$ is a spectral projection of $B$ hence is uniformly smoothing by elliptic regularity.

\noindent Let us now pass to examine the operator 
$\Pi_a:= \Pi_{\geq a} { \circ(\cdot)_{|\pal\times \R^+} }\circ \pkp$ defined by the composition
\noindent $
{
\xymatrix{ L^2(L_x) \ar[r] & \operatorname{Ker}_{L^2}(D_x^+) \ar[r]&L^2(\partial L_x\times \R^+) \ar[r]&L^2(\R^+;\mathcal{H}_{B}(\R\setminus [-a,a]))
}}$ 
arising from the second addendum of the splitting \eqref{decomp}.
Let $\varphi_k$ be the characteristic function of $r\leq k$ and
$\Lambda_k:=\Pi_{\geq a}\circ \varphi_k \circ (\cdot)_{|\partial_x\times \R^+}\circ \chi_{\{0\}}(D^+_x).$ Now
\begin{align}\nonumber
\|(\Pi_a-\Lambda_k)\xi\|=
\|\Pi_{\geq a}(\varphi_k-1)(\cdot)_{|\partial L_x \times \R^+}\chi_{\{0\}}(D^+_x)\xi\|_{L^2(\partial L_x\times \R^+)}\\\label{final} 
= \int_{k}^{\infty}\int_{(a,\infty)\times \mathbb{N}}e^{-2\lambda r}|\zeta_0|^2d\mu
(\lambda)dt
 \leq e^{-2ak}\|\xi\|_{L^2(\partial L_x \times \R^+)}.\end{align}
\noindent Finally choose a compact $A\subset \partial L_x$, estimate \eqref{final} shows that $S_k:=\chi_{A^{\square}}\Lambda_k \chi_{A^{\square}}$ converges uniformly to $\chi_{A^{\square}}\Pi_a \chi_{A^{\square}}.$ 
\noindent Observe that $S_k$ is compact by Rellich theorem and regularity theory in fact $\Pi_{\textrm{Ker}(T^+)}$ is obtained by functional calculus from a rapid Borel function hence has a uniformly smoothing Schwartz--kernel. Since
$\chi_{A^{\times}}\Lambda_k \Pi_{>a}\Pi_{\textrm{Ker}(T^+)}\chi_{A^{\times}}$ is norm--limit of compact operators is compact and a compact projection is finite rank. \end{proof}

\subsection{Breuer--Fredholm perturbation}

\noindent Our main application of the splitting principle is the construction of a $\Lambda$--Breuer Fredholm perturbation of the leafwise Dirac operator. Recall the notations; $X_k:=\{r\leq k\}$, $Z_k:=\{r\geq k\}.$  
Let $\theta$ be a smooth function satisfying $\theta=\theta(r)=r$ on $Z_1$ while $\theta(r)=0$ on $X_{1/2}$, put $\dot{\theta}=d\theta/dr.$ Let $\Pi_{\epsilon}:=\chi_{I_{\epsilon}}(D^{\mathcal{F}_{\partial}})$ for $I_{\epsilon}:=(-\epsilon,0)\cup (0,\epsilon).$
\bigskip
\noindent Our perturbation will be the leafwise operator \begin{equation}\label{de2}D_{\epsilon,u}:=D+\dot{\theta} \Omega(u-\dfo \Pi_{\epsilon})\textrm{ for }\epsilon>0,\quad u\in \mathbb{R}.\end{equation} \noindent We write $D_{\epsilon,u}=D^+_{\epsilon,u}\oplus D^-_{\epsilon,u}$ and $D_{\epsilon,u,x}$ for its restriction to $L_x$, also for brevity $D_{\epsilon,0
}:=D_{\epsilon}.$

\noindent Notice that the perturbed boundary operator is \begin{equation}\label{23dice}\deuf =\dfo(1-\Pi_{\epsilon})+u=D_{\epsilon,0}^{\mathcal{F}_{\partial}}+u.\end{equation} Since for $\epsilon>0$, $0$ is an isolated point in the spectrum of $D_{\epsilon,0}^{\mathcal{F}_{\partial}}$ we see
that $\deuf$ is invertible for $0<|u|<\epsilon$. 
\noindent For further application let us compute the essential spectrum of 

\noindent $B_{\epsilon,u}=D+\Omega(u-D^{\mathcal{F}_{\partial}}\Pi_{\epsilon})$ on the foliated cylinder  $Z_0$ with product foliation $\mathcal{F}_{\partial}\times \R$. Since we deal with product structure operators we can surely think the Von Neumann algebra becomes $\operatorname{End}_{\Lambda_0}(E)\otimes B(L^2(\R))$ where $\operatorname{End}_{\Lambda_0}(E)$ is the Von Neumann algebra of the base i.e. the foliation induced on the transversal $X_0\times\{0\}$. The integration process shows that the trace is nothing but $\tru=\operatorname{tr}_{\Lambda_0}\otimes \operatorname{tr}$ where the second factor is the canonical trace on $B(L^2(\R))$.
 We can write 
\begin{align}\label{22dice}B_{\epsilon,u}^2=&\left(\begin{array}{cc}0 & -\partial_r+u+D^{\mathcal{F}_{\partial}}(1-\Pi_{\epsilon}) \\\partial_r+u+D^{\mathcal{F}_{\partial}}(1-\Pi_{\epsilon})&0\end{array}\right)^2=-\partial_r^2 \operatorname{Id}+V^2.\end{align}

\noindent Consider the spectral measure $\mu_{\Lambda_0,V^2}$ of $V^2$ on the transversal section $X_0\times \{0\}$. We claim the following facts
\begin{enumerate}
\item $\omega:=\inf \operatorname{supp}(\mu_{\Lambda_0,V^2})>0$
\item $\mu_{\Lambda,B^2_{\epsilon,u}}(a,b)=\infty,\quad 0\leq a<b,\quad \omega<b$
\item $\mu_{\Lambda,B^2_{\epsilon,u}}(a,b)=0,\quad 0\leq a<b\leq \omega.$
\end{enumerate} First of all 1. is immediately proven since \eqref{23dice} together with \eqref{22dice} implies the inclusion  $\operatorname{spec}(D_{\epsilon,u}^{\mathcal{F}_{\partial}})^2\subset [(\epsilon+u)^2,\infty).$ To prove 2. one first uses the Fourier transform in the cylindrical direction. This gives a spectral representation of 
$-\partial_r^2$ as the multiplication by 
$y^2$ on 
$L^2(\R)$. Choose some 
$\gamma<(b-\omega)/2.$ We can prove the following inclusion for the spectral projections \begin{equation}\label{specin}
\chi_{(a,\gamma+\omega)}(V^2)\otimes \chi_{(0,\gamma)}(-\partial_r^2)\subset 
\chi_{(a,b)}(B^2_{\epsilon,u}).\end{equation} In fact one can also use a (leafwise) spectral representation for 
$V$ as the multiplication operator by $x$. Then \eqref{specin} is reduced to prove the implication
$$a<x^2<\gamma+\omega,\, 0<y^2<\gamma \Rightarrow a<x^2+y^2<b.$$  From \eqref{specin} it follows 
$\mu_{\Lambda,B^2_{\epsilon,u}}(a,b)\geq \mu_{\Lambda_0,V^2}(a,\gamma+\omega)\cdot \operatorname{tr}_{B(L^2(\R))}\chi_{(0,\gamma)}(-\partial_r^2)=\infty$ in fact the first factor is non zero and the second is clearly infinite. Finally the third statement is very similar in the proof. We have shown that 
$\spc(B_{\epsilon,u}^2)=[\omega,\infty),$ then
\begin{prop}\label{312}
The operator $\deu$ is $\Lambda$--Breuer--Fredholm if $0<|u|<\epsilon$.
\end{prop}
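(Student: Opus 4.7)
The plan is to verify the spectral criterion for $\Lambda$-Breuer-Fredholmness given by the preceding proposition: $D_{\epsilon,u}^+$ is $\Lambda$-Fredholm if and only if $0 \notin \spc(D_{\epsilon,u})$. Together with the computation $\spc(B_{\epsilon,u}^2) = [\omega,\infty)$ with $\omega > 0$ just established for the cylindrical model, the whole argument reduces to transferring the essential $\Lambda$-spectrum from $B_{\epsilon,u}$ to $D_{\epsilon,u}$, which is exactly the role played by the splitting principle.

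The first step is to observe that the perturbation $\dot\theta\,\Omega(u - D^{\mathcal{F}_\partial}\Pi_\epsilon)$ was designed precisely so that $D_{\epsilon,u}$ and $B_{\epsilon,u}$ coincide outside a compact set. On $Z_1 = \partial X_0 \times [1,\infty)$ one has $\dot\theta \equiv 1$, hence
\begin{equation*}
D_{\epsilon,u}\big|_{Z_1} \;=\; \bigl(D + \Omega(u - D^{\mathcal{F}_\partial}\Pi_\epsilon)\bigr)\big|_{Z_1} \;=\; B_{\epsilon,u}\big|_{Z_1}.
\end{equation*}
Both operators are self-adjoint, first order, leafwise elliptic intertwining operators on bounded geometry foliated manifolds with cylindrical ends, so the corollary to Theorem \ref{1} applies in the sense of Definition \ref{eqout} and gives $\spc(D_{\epsilon,u}) = \spc(B_{\epsilon,u})$.

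From $\spc(B_{\epsilon,u}^2) = [\omega,\infty)$ and the Borel functional-calculus identity $\mu_{\Lambda,T^2}(U) = \mu_{\Lambda,T}(\{\lambda : \lambda^2 \in U\})$ valid for any self-adjoint measurable field $T$, I deduce $0 \notin \spc(B_{\epsilon,u})$, and hence $0 \notin \spc(D_{\epsilon,u})$. Writing $D_{\epsilon,u}$ in its chiral form $\left(\begin{smallmatrix} 0 & D_{\epsilon,u}^- \\ D_{\epsilon,u}^+ & 0\end{smallmatrix}\right)$, item 3 of the preceding proposition yields that $D_{\epsilon,u}^+$ is $\Lambda$-Breuer--Fredholm.

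The main technical subtlety I anticipate concerns the precise matching of geometric models when invoking the corollary: the Fourier argument giving $\spc(B_{\epsilon,u}^2) = [\omega,\infty)$ was carried out on the doubly infinite cylinder $\partial X_0 \times \R$, which has two ends and is therefore not isometric to $X$ outside any pair of compact sets. I would handle this either by running the splitting argument on $X$ against $Y = \partial X_0 \times [0,\infty)$ (a single ended cylinder genuinely isometric to $X \setminus X_1$) and rechecking that the spectral computation for $B_{\epsilon,u}^2$ survives on the half-line --- only the fact $\operatorname{tr}_{B(L^2(\R^+))}\chi_{(0,\gamma)}(-\partial_r^2) = \infty$ is needed, and it still holds --- or by a direct inspection of the proof of Theorem \ref{1}, noting that the construction of the $\Lambda$-infinite dimensional subspace $(1-\phi)\widetilde G_\epsilon$ uses only cutoffs supported outside a compact set of $X$, so the argument transfers $B_{\epsilon,u}$-spectral sections from the common end without requiring a global isometry.
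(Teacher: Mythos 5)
Your proposal follows exactly the paper's intended route: the subsection is explicitly built so that the computation $\spc(B_{\epsilon,u}^2)=[\omega,\infty)$ on the model cylinder, combined with the splitting principle (Theorem \ref{1} and its corollary, since $\dot\theta\equiv 1$ on $Z_1$ makes $D_{\epsilon,u}=B_{\epsilon,u}$ there) and the spectral characterization of $\Lambda$--Breuer--Fredholmness, yields the proposition, which the paper states without further argument. Your additional remark about the two--ended versus one--ended cylinder addresses a point the paper glosses over, and either of your two fixes is adequate; otherwise the argument is the same as the paper's.
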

\noindent Next we shall investigate the relations between the Breuer--Fredholm index of the perturbed operator and the $L^2$--index of the unperturbed Dirac operator. To this end we shall make use of weighted $L^2$--spaces as in the work of Melrose \cite{Me}.
\begin{dfn}
For $u\in \R$, denote 
$e^{u\theta}L^2$ the Borel field of Hilbert spaces (with obvious Borel structure given by $L^2$) $\{e^{u\theta}L^2(L_x;E)\}_x$ where, for 
$x\in X$, $e^{u\theta}L^2(L_x;E)$ is the space of distributional sections $w$ such that $e^{-u\theta}\omega\in L^2(L_x;E)$. Analog definition for weighted Sobolev spaces $e^{u\theta}H^k$ can be written.
\end{dfn}
\noindent Notice that $e^{u\theta}L^2(L_x;E)=L^2(L_x;E,e^{-2u\theta}dg_{|L_x})$ where $dg$ is the leafwise Riemannian density so these Hilbert fields correspond to the right representation of $\mathcal{R}$ with the longitudinal measure $x\in X \longmapsto e^{-2u\theta}dg_{|L_x}=r^*(e^{-2u\theta}dg)$ (transverse function, in the language of the non commutative integration theory \cite{Cos}).
\noindent The operators $D$ and its perturbation $D_{\epsilon,u}$ extend to a field of unbounded operators 
$e^{u\theta}L^2\longrightarrow e^{u\theta}L^2$ with domain 
$e^{u\theta}H^1$. Put $$e^{\infty \theta}L^2_x:=\bigcup_{\delta>0}e^{\delta \theta}L^2_x.$$
\noindent In what follows we will use, for brevity the following notation: $\partial L_x:=L_x \cap (\partial X_0\times \{0\})$ and 
$Z_x:=\partial L_x \times [0,\infty)$ for the cylindrical end of the leaf $L_x$.

\noindent For a smooth section 
$s^{\pm}$ such that 
$D_{\epsilon,u,x}^{\pm}s^{\pm}=0$ we have 
$(D_{\epsilon,u,x}^{\pm})_{|{\partial L_x\times \R^+}}(s^{\pm})_{|\partial L_x \times \R^+}=0$ that can be easily seen choosing smooth $r$--functions  
$\phi,\psi$ with $\phi_{X_0}=1$, $\psi_{Z_{1/4}}=1$, $\operatorname{supp}(\psi\subset Z_{1/8})$ and evaluating $[D_{\epsilon,u,x}^{\pm}(\phi(1-\psi)s+\phi\psi s)=0]_{|\partial L_x\times \R^+}.$

\noindent The isomorphism $W$ defined  in the proof of finiteness property for the kernel projection, can be defined also as an isomorphism  $e^{u\theta}L^2(\partial L_x \times \R^+,F) \simeq {\mathcal{H}}_B\otimes e^{u\theta} L^2(\R^+)$ in a way that solutions of $D^{\pm}_{\epsilon,u,x}s^{\pm}=0$ with conditions $s^{\pm}\in e^{\infty \theta}\cap L^2_x$ can be represented as solutions of $[\pm \partial_r+\lambda+\dot{\theta}(r)(u-\chi_{\epsilon}(\lambda)\lambda)]Ws^{\pm}=0$ with $\chi_{\epsilon}(\lambda)=\chi_{(-\epsilon,0)\cup(\epsilon,0)}(\lambda)$ acting as a multiplier on $\bigoplus_j L^2(\R,\mu_j).$ In particular (forgetting for brevity the restriction symbol)
\begin{equation}
\label{rappa}
Ws^{\pm}=\zeta_j^{\pm}(\lambda)\operatorname{exp}\{{\mp u \theta(r) \mp\lambda[r-\theta(r)\chi_{\epsilon}(\lambda)]}\}\end{equation} with suitable choosen $\zeta_j^{\pm}(\lambda)
\in L^2(\mu_j).$

\begin{prop}\label{rappacon}
Let $\epsilon>\delta>0$ and $\delta'\in \R$ then
\begin{enumerate}
\item $\xi \in \operatorname{Ker}_{e^{\delta' \theta}L^2}
(D_x^+)\Longmapsto\xi_{Z_x}=e^{-r\defox}{h}$ 
with ${h}\in \chi(\defox)_{(-\delta',\ty)}L^2_x.$
\item $\xi \in \operatorname{Ker}_{L^2}(D^+_{\epsilon,x})\Longmapsto \xi_{Z_x}=e^{-r \defox+\theta(r)\defox \Pi_{\epsilon,x}}h$, with $h\in \chi(\defox)_{(\epsilon,\ty)}L^2_x$
\item $\xi \in \operatorname{Ker}_{e^{\delta \theta}L^2}(D^+_{\epsilon,x})\Longmapsto \xi_{|Z_x}=e^{-r\defox+\theta(r)\defox \Pi_{\epsilon,x}}h,$ with $h\in \chi(\defox)_{(-\epsilon,\ty)}L^2_x,$
\end{enumerate} recall that $\Pi_{\epsilon,x}=\chi_{(-\epsilon,\epsilon)-\{0\}}(\defox).$ Moreover the following identity (as fields of operators) holds true
$$D^{\pm}e^{\mp\theta(r)D^{\mathcal{F}_{\partial}}\Pi_{\epsilon}}=e^{\mp \theta(r) D^{\mathcal{F}_{\partial}} \Pi_{\epsilon}}D_{\epsilon}^{\pm}.$$
\end{prop}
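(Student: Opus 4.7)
The plan is to reduce everything to the Browder--G{\aa}rding spectral representation of the leafwise boundary operator $\defox$ established in the proof of the finiteness property above. Fix a leaf $L_x$; by elliptic regularity any kernel element $\xi$ restricts smoothly to the cylinder $Z_x = \partial L_x \times \mathbb{R}^+$ and solves $D^+_{\epsilon,u,x}\xi = 0$ there. Conjugating by $W = V\otimes\operatorname{Id}$ diagonalises $\defox$ as multiplication by $\lambda$ and turns the equation into the family of first--order ODEs $(\partial_r + \lambda + \dot{\theta}(r)(u-\chi_\epsilon(\lambda)\lambda))(W\xi)(\lambda,\cdot) = 0$, whose integration is precisely the representation \eqref{rappa}. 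All three assertions correspond to the specialisation $u=0$ and differ only in the weighted $L^2$ class in which $\xi$ is required to lie.

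For part (1) (where also $\epsilon = 0$, $\chi_\epsilon \equiv 0$), \eqref{rappa} reduces to $W\xi = \zeta(\lambda)e^{-\lambda r}$. I will impose $\xi|_{Z_x} \in e^{\delta'\theta}L^2$ on the cylinder (where $\theta(r) = r$), which becomes $\int_0^\infty e^{-2(\delta'+\lambda)r}\,dr < \infty$ pointwise $\mu_j$--a.e., forcing $\zeta$ to be supported in $\lambda > -\delta'$; applying $W^{-1}$ then yields $\xi|_{Z_x} = e^{-r\defox}h$ with $h := V^{-1}\zeta \in \chi_{(-\delta',\infty)}(\defox)L^2_x$. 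For parts (2) and (3) the cylindrical exponent reads $-\lambda r(1-\chi_\epsilon(\lambda))$; I will partition the $\lambda$--axis into the regions cut out by $\chi_\epsilon$ and check integrability case by case. Under the bare $L^2$ condition the exponent is either zero (on $|\lambda|<\epsilon$) or non--negative (on $\{\lambda \leq 0\}\setminus(-\epsilon,0)$), so $\zeta$ survives only on $(\epsilon,\infty)$, giving (2). Under the extra weight $e^{-\delta r}$ with $\delta<\epsilon$, the intermediate regions $|\lambda|<\epsilon$ and $\lambda = 0$ become admissible, and only the tail $\lambda \leq -\epsilon$ (where $\chi_\epsilon = 0$) is ruled out since there the effective exponent $-(\lambda+\delta)r$ decays only for $\lambda > -\delta > -\epsilon$; this gives support in $(-\epsilon,\infty)$ and hence (3). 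In both cases, inverting $W$ and reading $e^{-\lambda r + \theta(r)\lambda\chi_\epsilon(\lambda)}$ through the functional calculus of $\defox$ produces the stated exponential $e^{-r\defox + \theta(r)\defox\Pi_\epsilon}h$.

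The intertwining identity I plan to prove by a direct commutator computation. Setting $M := \theta(r)\defox\Pi_\epsilon$, since $\defox$ is $r$--independent and commutes with its own spectral projection $\Pi_\epsilon$, we obtain $[\partial_r, e^{\mp M}] = \mp\dot{\theta}\defox\Pi_\epsilon\, e^{\mp M}$ and $[\defox, e^{\mp M}] = 0$. Substituting into $D^\pm = \pm\partial_r + \defox$ on the collar gives $D^\pm e^{\mp M} = e^{\mp M}(\pm\partial_r + \defox - \dot{\theta}\defox\Pi_\epsilon) = e^{\mp M} D^\pm_\epsilon$, and away from the cylinder $\theta \equiv 0 \equiv \dot{\theta}$ makes the identity trivial. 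Passing from leafwise identities to one of fields of operators is immediate from measurability of all the ingredients. I expect the main obstacle to be the measure--theoretic case analysis in (2)--(3): one must argue rigorously that failure of (weighted) $L^2(dr)$ integrability on a Borel set of positive $\mu_j$--measure forces $\zeta_j$ to vanish there, and treat the boundary points $\lambda = 0, \pm\epsilon$ consistently across the decomposition $\bigoplus_j L^2(\mu_j)$ so that the resulting support condition translates cleanly into a spectral projection $\chi_{(\epsilon,\infty)}(\defox)$ or $\chi_{(-\epsilon,\infty)}(\defox)$.
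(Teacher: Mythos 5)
Your proposal is correct and follows essentially the same route as the paper: the paper likewise reads off the support condition on the spectral side from the Browder--G{\aa}rding representation \eqref{rappa} specialised to $u=0$ (and $\epsilon=0$ for part 1), imposing weighted square--integrability in $r$, and dismisses the intertwining identity as a direct computation. Your write--up merely makes explicit the case analysis over the regions cut out by $\chi_\epsilon$ and the commutator computation $[\partial_r,e^{\mp\theta\defox\Pi_\epsilon}]=\mp\dot\theta\,\defox\Pi_\epsilon e^{\mp\theta\defox\Pi_\epsilon}$, which the paper leaves implicit.
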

\begin{proof}

1. from the representation formula \eqref{rappa} of formal solutions for $u=0,$ $\epsilon=0$ it remains $\xi=\xi_j(\lambda)e^{-\lambda r}$. Then $e^{-\delta' \theta}\xi$ must be square integrable hence \\ \noindent $\xi_j(\lambda)=h_j(\lambda)\in \chi_{(-\delta',\ty)}(\defox)$. 
The remaining points are proved in a very similar way. The last statement is merely a computation.
\end{proof}

\noindent Solutions of $D^{\pm}_{\epsilon,x}s^{\pm}=0$ belonging to the space 
$\bigcap_{u>0}e^{u\theta}L^2(L_x;E^{\pm})$ are called 
$L^2$--\emph{extended solutions}, in symbols 
$\operatorname{Ext}(D_{\epsilon,x}^{\pm}).$ 
\begin{prop}\label{exx}
For every $x\in X$ and $0<u<\epsilon$
\begin{align}\label{primain}
&1.\,\operatorname{Ker}_{L^2}(\deup)=\operatorname{Ker}_{e^{-u\theta}L^2}(\deup)=\operatorname{Ker}_{L^2}(D^{\pm}_{\epsilon,\mp u,x})\\ &2.\,
\operatorname{Ext}(\deup)=\operatorname{Ker}_{e^{u\theta}L^2}(\deup)=\operatorname{Ker}_{L^2}(\dex).\\& 3.\, \label{terx}
\operatorname{Ker}_{L^2}(D^{\pm}_{\epsilon,x})\subset \operatorname{Ext}(D^{\pm}_{\epsilon,x})
\end{align}
\end{prop}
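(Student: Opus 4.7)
The plan is to combine two ingredients: a conjugation identity relating $D^\pm_{\epsilon,u,x}$ to $D^\pm_{\epsilon,x}$ through the exponential weights $e^{\pm u\theta}$, and the explicit Browder--G{\aa}rding representation of kernel elements supplied by Proposition \ref{rappacon}, which pins down the spectral support of the boundary datum $h$ and hence the decay rate of solutions on the cylinder.

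First I would verify the conjugation identity
\[
D^\pm_{\epsilon,u,x}=e^{\mp u\theta}\,D^\pm_{\epsilon,x}\,e^{\pm u\theta}.
\]
Off the cylinder both $\theta$ and $\dot\theta$ vanish, so the identity is trivial; on the cylinder $D^\pm=\pm\partial_r+D^{\mathcal{F}_\partial}$ and the Leibniz rule gives $[D^\pm,e^{\pm u\theta}]=\pm u\dot\theta\,e^{\pm u\theta}$, from which the identity follows upon using $D^\pm_{\epsilon,u}=D^\pm+\dot\theta(u-D^{\mathcal{F}_\partial}\Pi_\epsilon)$ and the fact that $e^{\pm u\theta}$ commutes with $D^{\mathcal{F}_\partial}\Pi_\epsilon$. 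Multiplication by $e^{-u\theta}$ and by $e^{u\theta}$ then yields the canonical identifications
\[
\operatorname{Ker}_{L^2}(D^\pm_{\epsilon,\mp u,x})\ \simeq\ \operatorname{Ker}_{e^{-u\theta}L^2}(D^\pm_{\epsilon,x}),\qquad \operatorname{Ker}_{L^2}(D^\pm_{\epsilon,\pm u,x})\ \simeq\ \operatorname{Ker}_{e^{u\theta}L^2}(D^\pm_{\epsilon,x}),
\]
which dispose of the rightmost equalities in (1) and (2).

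For the remaining equality in (1), the inclusion $\operatorname{Ker}_{e^{-u\theta}L^2}\subset\operatorname{Ker}_{L^2}$ is immediate from $e^{-u\theta}L^2\subset L^2$ for $u>0$. For the reverse inclusion, apply Proposition \ref{rappacon}(2): an $L^2$ kernel element $\xi$ of $D^+_{\epsilon,x}$ equals $e^{-rD^{\mathcal{F}_\partial}}h$ on $Z_x$ (the $\Pi_\epsilon$ correction drops since $h$ is spectrally supported in $(\epsilon,\infty)$). The spectral theorem for $D^{\mathcal{F}_\partial}$ then gives, for every $0<u<\epsilon$,
\[
\|e^{u\theta}\xi\|^2_{L^2(Z_x)}\leq \int_0^\infty e^{2ur}\int_\epsilon^\infty e^{-2r\lambda}|h(\lambda)|^2\,d\mu(\lambda)\,dr=\int_\epsilon^\infty\frac{|h(\lambda)|^2}{2(\lambda-u)}\,d\mu(\lambda)\leq\frac{\|h\|^2}{2(\epsilon-u)}<\infty,
\]
so $\xi\in e^{-u\theta}L^2$. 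The $D^-$ case is entirely symmetric, with boundary spectrum in $(-\infty,-\epsilon)$.

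For (2), Proposition \ref{rappacon}(3) describes $\operatorname{Ker}_{e^{\delta\theta}L^2}(D^+_{\epsilon,x})$ by boundary data $h\in\chi_{(-\epsilon,\infty)}(D^{\mathcal{F}_\partial})L^2$, and the spectral cutoff $(-\epsilon,\infty)$ depends only on $\epsilon$ for every $0<\delta<\epsilon$. Monotonicity of $u\mapsto e^{u\theta}L^2$ combined with this $u$-independence shows that $\operatorname{Ker}_{e^{u\theta}L^2}(D^\pm_{\epsilon,x})$ is the same space for every $u\in(0,\epsilon)$, so the intersection $\operatorname{Ext}(D^\pm_{\epsilon,x})=\bigcap_{u>0}\operatorname{Ker}_{e^{u\theta}L^2}(D^\pm_{\epsilon,x})$ collapses to this common value, yielding (2). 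Statement (3) is trivial: $L^2\subset e^{u\theta}L^2$ for every $u>0$, so $\operatorname{Ker}_{L^2}\subset\operatorname{Ext}$. The only non-routine step is the observation that the spectral range in Proposition \ref{rappacon}(3) is independent of the weight parameter as long as $0<u<\epsilon$; this is what allows the intersection defining $\operatorname{Ext}$ to stabilise, and everything else is sign bookkeeping together with reading off decay rates from the representation \eqref{rappa}.
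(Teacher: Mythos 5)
Your argument is correct and follows essentially the same route as the paper: both rest on the cylindrical Browder--G{\aa}rding representation of solutions (formula \eqref{rappa}, repackaged in Proposition \ref{rappacon}), reading off the spectral support of the boundary datum and the resulting exponential decay, and your conjugation identity $D^{\pm}_{\epsilon,u,x}=e^{\mp u\theta}D^{\pm}_{\epsilon,x}e^{\pm u\theta}$ merely makes explicit the weight factor $e^{\mp u\theta}$ already built into \eqref{rappa}. One cosmetic remark: the intermediate commutator should read $[D^{\pm},e^{\pm u\theta}]=u\dot{\theta}\,e^{\pm u\theta}$ (the same sign in both chiralities), which is in fact what your displayed conjugation identity and the subsequent kernel identifications use.
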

\begin{proof}
We show only the first equality of \eqref{primain} the others being very similar. This is a simple application of equation \eqref{rappa}. In fact, for $u=0$, $Ws^{\pm}=\zeta_j^{\pm}(\lambda)\operatorname{exp}\{{ \mp\lambda[r-\theta(r)\chi_{\epsilon}(\lambda)]}\}.$ The condition of being square integrable in $(\R,\mu_j)\otimes (\R^+,dr)$ is easily seen to be equivalent to $\zeta_j^+(\lambda)=0$ $\lambda<\epsilon$, $\lambda$--a.e and $\zeta_j^-(\lambda)=0$ $\lambda>-\epsilon$. In particular for $r\geq 1$
$Ws^{\pm}=\zeta^{\pm}_j(\lambda)e^{\mp\lambda r}\chi_{\pm \lambda \geq \epsilon}(\lambda)$ then $e^{u\theta}s^{\pm}\in L^2$ if $u<\epsilon$. For the reverse inclusion the proof is the same. For the third stament note that $e^{u\theta}L^2\subset e^{v\theta}L^2$ for every $u,v\in \R$ with $u\leq v$ then $\operatorname{Ker}_{L^2}\subset \operatorname{Ext}$.
\end{proof}
\noindent Proposition \ref{exx} shows that the mapping $x\longmapsto \ext$ gives a Borel field of closed subspaces of $L^2$. No difference in notation between the space $\operatorname{Ext}$ and $\operatorname{Ker}$ and the corresponding projection in the Von Neumann algebra will be done in the future.
Inclusion \eqref{terx} together with \ref{312} and the finiteness property of the $L^2$--kernel projection finally says that the difference 
\begin{equation}
\label{deffh}
h^{\pm}_{\Lambda,\epsilon}=\operatorname{dim}_{\Lambda}(\operatorname{Ext}(D_{\epsilon}^{\pm}))-\operatorname{dim}_{\Lambda}(\operatorname{Ker}_{L^2}(D_{\epsilon}^{\pm}))=\tru(\operatorname{Ext}(D_{\epsilon}^{\pm}))-\tru(\operatorname{Ker}_{L^2}(D_{\epsilon}^{\pm}))\in \R\end{equation} is a finite number.  
\begin{lem}\label{111}For $\epsilon>0$
\begin{enumerate}
\item $\operatorname{dim}_{\Lambda}{Ker}_{L^2}(D^{\pm}_{\epsilon})=\lim_{u\downarrow 0}\operatorname{dim}_{\Lambda}{Ker}_{L^2}(D^{\pm}_{\epsilon,\mp u})=\lim_{u\downarrow 0}\operatorname{dim}_{\Lambda}{Ker}_{L^2}(D^{\pm}_{\epsilon,\pm u})-h^{\pm}_{\Lambda,\epsilon},$
\item
$\operatorname 
{Ind}_{L^2,\Lambda}(D^+_{\epsilon})=\lim_{u\downarrow 0}\operatorname{Ind}_{\Lambda}(D^{+}_{\epsilon,u})-h^{+}_{\Lambda,\epsilon}=\lim_{u\downarrow 0}\operatorname{Ind}_{\Lambda}(D^{+}_{\epsilon,-u})+h^{-}_{\Lambda,\epsilon} $
\end{enumerate}
\end{lem}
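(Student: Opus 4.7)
The two limits in the statement are in fact independent of $u$ for $0<u<\epsilon$; this is what Proposition \ref{exx} tells us. The plan is to translate the leafwise identifications of kernels and extended kernels into equalities of random projections in $\operatorname{End}_\Lambda(E^\pm)$, take $\Lambda$-dimensions, and combine the result with the definition of $h^\pm_{\Lambda,\epsilon}$.

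For assertion (1), Proposition \ref{exx} gives, for every $x\in X$ and every $0<u<\epsilon$, the identities $\operatorname{Ker}_{L^2}(D^\pm_{\epsilon,x})=\operatorname{Ker}_{L^2}(D^\pm_{\epsilon,\mp u,x})$ and $\operatorname{Ext}(D^\pm_{\epsilon,x})=\operatorname{Ker}_{L^2}(D^\pm_{\epsilon,\pm u,x})$. Since all the spaces involved depend measurably on $x$, these equalities hold at the level of the associated orthogonal projections in $\operatorname{End}_\Lambda(E^\pm)$. Applying $\tru$ the first identity yields $\operatorname{dim}_\Lambda \operatorname{Ker}_{L^2}(D^\pm_{\epsilon,\mp u})=\operatorname{dim}_\Lambda \operatorname{Ker}_{L^2}(D^\pm_\epsilon)$, which is constant in $u$; and the second identity combined with $h^\pm_{\Lambda,\epsilon}=\operatorname{dim}_\Lambda \operatorname{Ext}(D^\pm_\epsilon)-\operatorname{dim}_\Lambda \operatorname{Ker}_{L^2}(D^\pm_\epsilon)$ gives
\[
\operatorname{dim}_\Lambda \operatorname{Ker}_{L^2}(D^\pm_{\epsilon,\pm u})-h^\pm_{\Lambda,\epsilon}=\operatorname{dim}_\Lambda \operatorname{Ext}(D^\pm_\epsilon)-h^\pm_{\Lambda,\epsilon}=\operatorname{dim}_\Lambda \operatorname{Ker}_{L^2}(D^\pm_\epsilon).
\]

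For assertion (2), Proposition \ref{312} guarantees that $D^+_{\epsilon,u}$ is $\Lambda$-Breuer--Fredholm whenever $0<|u|<\epsilon$, so its $\Lambda$-index equals $\operatorname{dim}_\Lambda \operatorname{Ker}_{L^2}(D^+_{\epsilon,u})-\operatorname{dim}_\Lambda \operatorname{Ker}_{L^2}(D^-_{\epsilon,u})$. Substituting the two sign choices provided by part (1): for $u>0$,
\[
\operatorname{Ind}_\Lambda(D^+_{\epsilon,u})=\operatorname{dim}_\Lambda \operatorname{Ext}(D^+_\epsilon)-\operatorname{dim}_\Lambda \operatorname{Ker}_{L^2}(D^-_\epsilon)=\operatorname{Ind}_{L^2,\Lambda}(D^+_\epsilon)+h^+_{\Lambda,\epsilon},
\]
while feeding $-u$ swaps the roles of the $+$ and $-$ spaces and yields
\[
\operatorname{Ind}_\Lambda(D^+_{\epsilon,-u})=\operatorname{dim}_\Lambda \operatorname{Ker}_{L^2}(D^+_\epsilon)-\operatorname{dim}_\Lambda \operatorname{Ext}(D^-_\epsilon)=\operatorname{Ind}_{L^2,\Lambda}(D^+_\epsilon)-h^-_{\Lambda,\epsilon}.
\]
Both right-hand sides are independent of $u\in(0,\epsilon)$, so the limits in the statement are trivial and rearranging produces the two displayed formulas.

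No step is genuinely the main obstacle: the content is algebraic bookkeeping on top of the non-trivial Propositions \ref{exx} and \ref{312} together with the finiteness property for local traces of the $L^2$-kernel projections proved earlier. The only point that requires care is the a priori finiteness of all the individual $\Lambda$-dimensions $\operatorname{dim}_\Lambda\operatorname{Ker}_{L^2}(D^\pm_\epsilon)$ and $\operatorname{dim}_\Lambda\operatorname{Ext}(D^\pm_\epsilon)$, since otherwise the subtractions defining $h^\pm_{\Lambda,\epsilon}$ and the manipulations of Breuer--Fredholm indices above would not represent real numbers; this finiteness is exactly what the previous subsection was designed to supply.
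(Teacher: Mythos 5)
Your proof is correct and follows essentially the same route as the paper: the paper's own argument is a one-liner citing Proposition \ref{exx} for the $u$-independence of the kernel identifications and deducing part (2) from part (1) ``by summation''. You have simply made explicit the bookkeeping the paper leaves implicit, namely passing from the leafwise equalities to $\Lambda$-dimensions, invoking Proposition \ref{312} to write $\operatorname{Ind}_{\Lambda}(D^{+}_{\epsilon,\pm u})$ as a difference of kernel dimensions, and noting the finiteness of the quantities being subtracted.
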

\begin{proof}Nothing to prove here, proposition \ref{exx} says that the inclusion is constant for $u$ sufficiently small, the second one in the statement follows from the first by summation.
\end{proof}
\noindent Now define the extended solutions $\operatorname{Ext}(D^{\pm}_x)$ in the same way i.e. distributional solution of the differential operator $D^{\pm}_x:C^{\infty}_c(L_x;E^{\pm})\longrightarrow C^{\ty}_c(E^{\mp};E)$ belonging to each weighted $L^2$--space with positive weights,
$$\operatorname{Ext}(D^{\pm}_x)=\bigcap_{u>0}\operatorname{Ker}_{e^{u\theta}L^2}(D^{\pm})=\{s\in C^{-\infty}(L_x;E^{\pm});\,D ^{\pm}s=0;\, e^{-u\theta}s\in L^2\, \forall u>0\}.$$ Here we have made use of the longitudinal Riemannian density to to identify sections with sections with values densities and the Hermitian metric on $E$, in a way that one has the isomorphism 
$C^{-\infty}(L_x;E^{\pm})\simeq C^{\infty}_c(L_x;(E^{\pm})^*\otimes \Omega(L_x))^*$ to simplify the notation.

\noindent It is clear by standard elliptic regularity that the extended solutions of $D^{\pm}$ are smooth on each leaf. In fact $D^{\pm}$ a first order differential elliptic operator and one can construct a parametrix i.e. an inverse of $D^{\pm}$ modulo a smoothing operator. An operator that sends each Sobolev space onto each other (of the new, weighted metric of corse). 

\noindent By definition $\operatorname{Ext}(D^{\pm})\subset e^{u\theta}L^2$ for every $u>0$, define $\operatorname{dim^{(u)}_{\Lambda}}(\operatorname{Ext})$ as the trace in $\operatorname{End}_{\Lambda}(e^{u\theta}L^2)$ of the projection on the closure of $\operatorname{Ext}$, now we must check that under the natural inclusion $e^{u\theta}L^2\subset e^{u'\theta}L^2$ $(u<u')$ these dimensions are preserved. This is done at once in fact the inclusion $\operatorname{Ext}(D^{\pm})\subset e^{u\theta}L^2 \hookrightarrow   \operatorname{Ext}(D^{\pm})\subset e^{u'\theta}L^2$ is bounded and extends to a bounded mapping 
$\overline{\operatorname{Ext}(D^{\pm})}^{ e^{u\theta}L^2} \longrightarrow   \overline{\operatorname{Ext}(D^{\pm})}^{ e^{u'\theta}L^2}$ with dense range. Now the unitary part of its polar decomposition is an unitary isomorphism then the $\Lambda$ dimensions are preserved by the essential property of formal dimension stating that if the space of homomorphisms of two random Hilbert spaces cointaines an invertible element then the dimensions are the same \cite{Cos}.

\begin{dfn}\label{170}
The $\Lambda$--dimension of the space of extended solution is $$\operatorname{dim}_{\Lambda}\operatorname{Ext}(D^{\pm}):=\operatorname{dim}_{\Lambda} \overline{\operatorname{Ext}(D^{\pm})}^{e^{u\theta}L^2}$$ for some $u>0$.
\end{dfn}
\begin{prop}\label{hj}
\begin{enumerate}
\item $\lim_{\epsilon \downarrow 0} \operatorname{dim}_{\Lambda}\operatorname{Ker}_{L^2}(D_{\epsilon}^{\pm})=\operatorname{dim}_{\Lambda}\operatorname{Ker}_{L^2}(D^{\pm})$
\item $\lim_{\epsilon\downarrow 0} \operatorname{Ind}_{L^2,\Lambda}D^+_{\epsilon}=\operatorname{Ind}_{L^2,\Lambda}D^+$
\item $\lim_{\epsilon \downarrow 0} \operatorname{dim}_{\Lambda}\operatorname{Ext}(D_{\epsilon}^{\pm})=\operatorname{dim}_{\Lambda}\operatorname{Ext}(D^{\pm})$
\end{enumerate}
\end{prop}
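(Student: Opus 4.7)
The plan is to exploit the explicit spectral descriptions of $L^2$ and extended solutions on the cylindrical end provided by Proposition \ref{rappacon}. All three limits will be reduced to normal continuity of $\operatorname{tr}_\Lambda$ on monotone nets of spectral projections of the leafwise boundary operator $\deffo$, coupled with the fact that as $\epsilon\downarrow 0$ the ``bad'' interval $(-\epsilon,\epsilon)\setminus\{0\}$ shrinks to the empty set.

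For Item 1, I would first establish the inclusion $\operatorname{Ker}_{L^2}(D^+_\epsilon)\subset \operatorname{Ker}_{L^2}(D^+)$ as subspaces of $L^2$. If $s\in \operatorname{Ker}_{L^2}(D^+_\epsilon)$, then by Proposition \ref{rappacon}(2) its cylindrical part corresponds to boundary datum $h\in\chi_{(\epsilon,\infty)}(\deffo)L^2$; since $\Pi_\epsilon h=0$, this forces $\Pi_\epsilon s|_{Z_x}=0$. Because $\dot\theta$ is supported on the cylinder and the difference $D^+-D^+_\epsilon$ is a multiple of $\dot\theta\,\Omega\,\deffo\Pi_\epsilon$, one concludes $D^+s=D^+_\epsilon s=0$. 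Running the argument in reverse shows $\operatorname{Ker}_{L^2}(D^+_\epsilon)=K_\epsilon:=\{w\in\operatorname{Ker}_{L^2}(D^+):\text{boundary datum in }\chi_{(\epsilon,\infty)}(\deffo)\}$. The associated projections $P_\epsilon$ increase strongly to the projection $P$ onto $\operatorname{Ker}_{L^2}(D^+)$ as $\epsilon\downarrow 0$, because $\chi_{(\epsilon,\infty)}(\deffo)\uparrow\chi_{(0,\infty)}(\deffo)$ strongly and the boundary-datum/solution correspondence is continuous by elliptic regularity on the interior together with the cylindrical estimate \eqref{74328}. Normal continuity of $\operatorname{tr}_\Lambda$ on monotone nets of projections bounded by the finite-trace projection $P$ yields Item 1.

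Item 2 is immediate from Item 1 applied separately to $D^+$ and to $D^-$, by subtraction.

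For Item 3 the subtlety is that $\operatorname{Ext}(D^+)$ and $\operatorname{Ext}(D^+_\epsilon)$ do not a priori lie in a common Hilbert space; I would fix $u>0$ and view both as closed subspaces of $e^{u\theta}L^2$ as in Definition \ref{170}. The intertwining $D^+e^{-\theta\deffo\Pi_\epsilon}=e^{-\theta\deffo\Pi_\epsilon}D^+_\epsilon$ of Proposition \ref{rappacon} makes $w\mapsto e^{\theta\deffo\Pi_\epsilon}w$ into an injective random intertwiner $\operatorname{Ext}(D^+)\hookrightarrow \operatorname{Ext}(D^+_\epsilon)$ with $e^{-\theta\deffo\Pi_\epsilon}$ as explicit inverse on its image. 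Tracking the boundary data via the representation formulas, this image is the subspace $I_\epsilon\subset \operatorname{Ext}(D^+_\epsilon)$ whose datum lies in $\chi_{[0,\infty)}(\deffo)\subset \chi_{(-\epsilon,\infty)}(\deffo)$; existence of such a random intertwiner with random-operator inverse preserves $\Lambda$-dimension (the principle invoked just before Definition \ref{170}), so $\operatorname{dim}_\Lambda I_\epsilon=\operatorname{dim}_\Lambda\operatorname{Ext}(D^+)$. The complement $\operatorname{Ext}(D^+_\epsilon)\ominus I_\epsilon$ consists of elements with boundary datum in $\chi_{(-\epsilon,0)}(\deffo)$; the injective boundary-datum map is a random intertwiner into $\chi_{(-\epsilon,0)}(\deffo)L^2(\partial L_x)$, so
$$\operatorname{dim}_\Lambda[\operatorname{Ext}(D^+_\epsilon)\ominus I_\epsilon]\le \operatorname{tr}_\Lambda\chi_{(-\epsilon,0)}(\deffo).$$
Since $\deffo$ is leafwise elliptic on the compact boundary foliation, bounded spectral intervals have finite $\Lambda$-trace, and $\chi_{(-\epsilon,0)}(\deffo)\downarrow 0$ as $\epsilon\downarrow 0$ (the eigenvalue $0$ is excluded throughout); normal continuity gives $\operatorname{tr}_\Lambda\chi_{(-\epsilon,0)}(\deffo)\downarrow 0$, finishing Item 3.

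The main obstacle I expect to face is Item 3: verifying carefully that the formally unbounded intertwiner $e^{\theta\deffo\Pi_\epsilon}$ (which is not bounded on $e^{u\theta}L^2$ in isolation) really induces a genuine $\Lambda$-isomorphism $\operatorname{Ext}(D^+)\cong I_\epsilon$ between the two weighted kernels, and that the boundary-datum map on the complement is an injective random intertwiner with closed range inside the controlled spectral subspace, so that the Von Neumann dimension inequality actually applies. Both facts are encoded in the cylindrical representation formulas of Proposition \ref{rappacon}, so the argument should go through modulo standard measurability checks.
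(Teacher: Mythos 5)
Your item 1 is where the proposal breaks down. The identification $\operatorname{Ker}_{L^2}(D^+_\epsilon)=K_\epsilon$ is fine and agrees with the paper, but your next step --- that the projections $P_\epsilon$ onto $K_\epsilon$ increase \emph{strongly} to the projection $P$ onto $\operatorname{Ker}_{L^2}(D^+)$ ``because $\chi_{(\epsilon,\infty)}(D^{\mathcal{F}_{\partial}})\uparrow\chi_{(0,\infty)}(D^{\mathcal{F}_{\partial}})$ strongly and the boundary-datum/solution correspondence is continuous'' --- is unjustified, and it is exactly the nontrivial content of the statement. The spectral truncations act on boundary data, but the set of boundary data of global $L^2$-solutions on a leaf is not invariant under $\chi_{(\epsilon,\infty)}(D^{\mathcal{F}_{\partial}})$: if $w\in\operatorname{Ker}_{L^2}(D^+)$ has cylindrical datum $h$, the truncated datum $\chi_{(\epsilon,\infty)}(D^{\mathcal{F}_{\partial}})h$ need not be the datum of \emph{any} global solution, so no approximant in $K_\epsilon$ is produced; continuity of the datum map (estimate \eqref{74328}) goes in the wrong direction for this. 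Since in the von Neumann setting the boundary operator may have spectrum accumulating at $0$, one cannot argue as in the single-leaf discrete case, and the claim $\bigvee_\epsilon P_\epsilon=P$ is, via normality, \emph{equivalent} to the trace convergence you are trying to prove --- so as written the argument is circular at its only hard point. The paper closes this gap with a codimension estimate instead: apply $D^+_\epsilon=D^+-\dot{\theta}\,\Omega\,D^{\mathcal{F}_{\partial}}\Pi_\epsilon$ to $\operatorname{Ker}_{L^2}(D^+)$; by the cylindrical representation of solutions this restriction has kernel exactly $\operatorname{Ker}_{L^2}(D^+_\epsilon)$ and range inside $\dot{\theta}\,D^{\mathcal{F}_{\partial}}e^{-rD^{\mathcal{F}_{\partial}}}\Pi_\epsilon\bigl(L^2(\partial L_x)\otimes L^2(\mathbb{R}^+)\bigr)$, whose $\Lambda$-dimension is at most $\operatorname{tr}_{\Lambda}\Pi_\epsilon\rightarrow 0$ by normality; the kernel--range dimension identity then bounds $\operatorname{dim}_{\Lambda}\operatorname{Ker}_{L^2}(D^+)-\operatorname{dim}_{\Lambda}\operatorname{Ker}_{L^2}(D^+_\epsilon)$ by a quantity tending to zero. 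You need this (or an equivalent) estimate in place of the strong-convergence claim; item 2 then follows by subtraction, as you say.

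Your item 3 takes a genuinely different route from the paper, which instead sandwiches $\operatorname{dim}_{\Lambda}\operatorname{Ker}_{e^{\delta\theta}L^2}(D^+)\leq \operatorname{dim}_{\Lambda}\operatorname{Ker}_{e^{\delta\theta}L^2}(D^+_{\epsilon})\leq \operatorname{dim}_{\Lambda}\operatorname{Ker}_{e^{(\delta+\epsilon)\theta}L^2}(D^+)$ using the same intertwiners $e^{\pm\theta D^{\mathcal{F}_{\partial}}\Pi_\epsilon}$, and it can be made to work, but two steps are only asserted. First, the boundedness of $e^{\theta D^{\mathcal{F}_{\partial}}\Pi_\epsilon}$ on $\overline{\operatorname{Ext}(D^+)}^{e^{u\theta}L^2}$ must be proved before invoking the dimension-preservation principle of Definition \ref{170}; it does hold, because on solutions the $(0,\epsilon)$-spectral components have the form $e^{-r\lambda}h$ and the map replaces them by $e^{(\theta(r)-r)\lambda}h$ with $0\leq r-\theta(r)\leq 1$, giving a bound with constant depending only on $u$ and $\epsilon$. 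Second, the orthocomplement $\operatorname{Ext}(D^+_\epsilon)\ominus I_\epsilon$ does \emph{not} literally consist of elements with boundary datum in $\chi_{(-\epsilon,0)}$, since orthogonality in $e^{u\theta}L^2$ does not respect the spectral decomposition of the datum; you should instead note that $\xi\mapsto\chi_{(-\epsilon,0)}(D^{\mathcal{F}_{\partial}})(\mathrm{datum\ of\ }\xi)$ is a bounded random operator vanishing precisely on $I_\epsilon$, hence injective on the complement, which gives the same bound by $\operatorname{tr}_{\Lambda}\chi_{(-\epsilon,0)}(D^{\mathcal{F}_{\partial}})\rightarrow 0$. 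With these repairs, and with item 1 redone as above, your argument becomes a correct (and in part alternative) proof.
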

\begin{proof}
1.
 let $\xi \in \operatorname{Ker}_{L^2}(D^{+}_{\epsilon,x})$ thanks to Proposition \ref{rappacon} we can represent \\ \noindent $\xi_{|Z_x}=e^{-r \defox+\theta(r)\defox \Pi-{\epsilon,x}}h,\,\, h\in \chi_{(\epsilon,\ty)}(\defox).$  From $\Pi_{\epsilon,x}h=0$ we get
 
\noindent $
D^{+}_x\xi_{|Z_x}=(D^+_{\epsilon,x}+\theta(r)D^{\mathcal{F}_{\partial}}_x\Pi_{\epsilon,x})\xi_{|Z_x}=\theta(r) \defox \Pi_{\epsilon,x}(e^{-r\defox+\theta(r)\defox \Pi_{\epsilon,x}}h)=0
$ meaning that $\operatorname{Ker}_{L^2}(D^+_{\epsilon,x})\subset \operatorname{Ker}_{L^2}(D^+).$
Moreover 

\noindent $
D^{+}_{\epsilon}(\operatorname{Ker}_{L^2}(D^+))= 
\dotto \defox \Pi_{\epsilon,x}(\operatorname{Ker}_{L^2}(D^+)\subset -\dotto \defox e^{-r \defox} \chi_{(-\epsilon,\epsilon)}(\defox)(L^2(\partial L_x \otimes L^2(\R^+)). 
$ Note that clearly $\operatorname{dim}_{\Lambda}\Big{[} \dotto \defox e^{-r \defox} \chi_{(-\epsilon,\epsilon)}(\defox)(L^2(\partial L_x \otimes L^2(\R^+))\Big{]}\longrightarrow_{\epsilon \rightarrow 0 } 0$ by the normality of the trace.
 Then the family of operators ${D^{+}_{\epsilon}}_{|\operatorname{Ker}_{L^2}(D^+)}:\operatorname{Ker}_{L^2}(D^+)\longrightarrow L^2$ has kernel  $\operatorname{Ker}_{L^2}(D^+_{\epsilon,x})$ and range with $\Lambda$ dimension going to zero. 1. follows by looking at an orthogonal decomposition 
 $\operatorname{Ker}_{L^2}(D^+)=\operatorname{Ker}_{L^2}(D^+_{\epsilon})\oplus \operatorname{Ker}_{L^2}(D^+)/\operatorname{Ker}_{L^2}(D_{\epsilon}^+).$

\noindent 2. follows immediately from $1.$

\noindent 3. consider the following commutative diagram
$$\xymatrix{\operatorname{Ker}_{e^{\delta\theta}L^2}(D^+)\ar[r]\ar[dr]^{\Psi^{+}_{\epsilon}} &\operatorname{Ker}_{e^{(\delta+\epsilon)\theta}L^2}(D^+)\\ 
&  \operatorname{Ker}_{e^{\delta\theta}L^2}(D^+_{\epsilon})\ar[u]^{\Psi^{-}_{\epsilon}}        }$$ where 
$\Psi^{\pm}_{\epsilon}=e^{\pm\theta \Pi_{\epsilon}D^{\mathcal{F}_{\partial}}}$. It is easily seen  thanks to the representation of solutions in proposition \ref{rappacon} that each arrow is injective and bounded with respect to the inclusions 

\noindent $
e^{\delta\theta}L^2\hookrightarrow e^{(\delta+2\epsilon)\theta}L^2
\hookrightarrow  e^{(\delta+\epsilon)\theta}L^2.$ Then adding to the right of the above diagram the column 
$e^{(\delta+2\varepsilon)\theta}L^2 \longrightarrow e^{(\delta+\varepsilon)\theta}L^2$ one gets a new diagram. This last column 
can be used to measure dimensions. The inequality
 $$\operatorname{dim}_{\Lambda}\operatorname{Ker}_{e^{\delta\theta}L^2}(D^+)\leq \operatorname{dim}_{\Lambda}\operatorname{Ker}_{e^{\delta\theta}L^2}(D^+_{\epsilon})\leq
\operatorname{dim}_{\Lambda}\operatorname{Ker}_{e^{(\delta+\epsilon)\theta}L^2}(D^+)$$ follows and prove immediately $3.$.\end{proof}

\section{Cylindrical finite propagation speed and \\
Cheeger--Gromov--Taylor type estimates.}
\subsection{The standard case}
\noindent A very important property of the Dirac operator on a manifold of bounded geometry $X$ is finite propagation speed for the associated wave equation.
Let $P\in \udif^1(X,E)$ uniformly elliptic first order (formally) self--adjoint operator.
 The diffusion speed of $P$ in $x$ is the norm of the principal symbol $\sup_{v\in S^*_x}|\sigma_{\textrm{pr}}(P)(x)|$ ($S_x^*$ is the fibre of cosphere bundle at $x$). Taking the supremum on $x$ in $M$ one gets the \emph{maximal diffusion speed} $c=c(P)$.

\noindent We say that an operator has \emph{finite propagation speed} if its maximal diffusion speed is finite.

\noindent Generalized Dirac operator associated to bounded geometry datas (manifold and Clifford structure) has finite propagation speed in fact its principal symbol is Clifford multiplication.

\noindent Now an application of the spectral theorem shows that for every initial data $\xi_0 \in C^{\infty}_c(X,E)$ there is a unique solution $t\mapsto \xi(t)$ of the Cauchy problem for the wave equation associated with $P$
\begin{equation}\label{waveproblem} 
\left\{\begin{array}{l} {\partial \xi}/\partial t-iP\xi=0, \\
\xi(0)=\xi_0,
\end{array}
\right.
\end{equation}
This solution is given by the application of the one parameter group of unitaries $\xi(t)=e^{itP}\xi_0$. By the Stone theorem the domain of $P$ is invariant under each unitary $e^{itP}$ and $e^{itP}$is bounded from each Sobolev space $H^s$ into itself. In particular the domain of $P$ is invariant under each unitary $e^{itP}$.
\begin{lem}
For $\theta$ suitably small and $x\in M$, $\|\xi(t)\|_{L^{2}B(x,\theta-ct)}$ is decreasing in $t$. In particular 
$\textrm{supp}(\xi_0)\subset B(x,r) \Longmapsto \textrm{supp}(e^{itP}\xi_0)\subset B(x,r+ct). $
\end{lem}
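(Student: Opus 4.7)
The plan is to prove monotonicity via a standard energy estimate on a shrinking geodesic ball, then deduce the support inclusion by repositioning the center. Take $\theta$ smaller than the injectivity radius at $x$, so that $B(x,\theta-ct)$ is a genuine geodesic ball with smooth boundary as long as $\theta-ct>0$. Define the energy
\begin{equation*}
E(t) \,:=\, \|\xi(t)\|^{2}_{L^{2}(B(x,\theta-ct))} \,=\, \int_{B(x,\theta-ct)} |\xi(t,y)|^{2}\,dy.
\end{equation*}
Differentiating in $t$ yields a contribution from the moving boundary and one from the time derivative of the integrand, namely
\begin{equation*}
E'(t) \,=\, -c\!\!\int_{\partial B(x,\theta-ct)} |\xi|^{2}\,dS \,+\, 2\,\Re\!\int_{B(x,\theta-ct)} \langle \partial_t\xi,\xi\rangle\,dV.
\end{equation*}

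Using the wave equation $\partial_t\xi = iP\xi$, the second term is $2\Re\int\langle iP\xi,\xi\rangle\,dV$. Since $P$ is a first order formally self-adjoint differential operator with principal symbol $\sigma(P)$, the standard integration by parts identity on a domain $\Omega$ with outward unit conormal $\nu$ gives
\begin{equation*}
\int_{\Omega}\!\!\big[\langle P\xi,\eta\rangle-\langle\xi,P\eta\rangle\big]\,dV \,=\, \tfrac{1}{i}\!\int_{\partial\Omega}\!\langle\sigma(P)(\nu)\xi,\eta\rangle\,dS.
\end{equation*}
Setting $\eta=\xi$ and taking the imaginary part produces $2\Re\int_{B}\langle iP\xi,\xi\rangle\,dV = \int_{\partial B}\langle\sigma(P)(\nu)\xi,\xi\rangle\,dS$. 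Since $\nu$ is of unit length and $\sigma(P)(\nu)$ is self-adjoint with operator norm bounded pointwise by the maximal diffusion speed $c$, we obtain $|\langle\sigma(P)(\nu)\xi,\xi\rangle|\leq c|\xi|^{2}$, and the two boundary integrals cancel in $E'(t)\leq 0$. This is the first assertion.

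For the support statement, fix $\xi_{0}$ with $\operatorname{supp}(\xi_{0})\subset B(x,r)$ and let $y$ be any point with $d(x,y)>r+ct$; pick a small $\delta>0$ so that $\rho:=d(x,y)-r-\delta>ct$. Apply the monotonicity just proved, but centered at $y$ with parameter $\theta=\rho$: at time zero the ball $B(y,\rho)$ is disjoint from $B(x,r)$ by construction, hence $E_{y}(0)=0$. By monotonicity $E_{y}(t)=\|\xi(t)\|^{2}_{L^{2}(B(y,\rho-ct))}=0$, so $\xi(t)$ vanishes on a neighbourhood of $y$. Since $y$ was arbitrary in the complement of $B(x,r+ct)$, the support of $e^{itP}\xi_{0}$ is contained in $B(x,r+ct)$.

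The only subtle step is the integration by parts, which should be justified at a fixed time on the smooth geodesic ball and then combined with the weak time derivative of $E(t)$; this is routine because $e^{itP}\xi_{0}$ preserves Sobolev regularity by Stone's theorem, so the pointwise manipulations are legitimate. Everything else is bookkeeping, and the smallness of $\theta$ enters only to guarantee that the geodesic spheres $\partial B(x,\theta-ct)$ are smooth submanifolds on which the divergence theorem applies.
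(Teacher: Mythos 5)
Your energy estimate follows exactly the route the paper itself points to (the paper does not prove the lemma; it cites Roe's Lemma 5.1 and Prop. 5.5 and sketches the same two steps): differentiate the $L^2$ norm over the shrinking geodesic ball, use Green's formula for the first-order symmetric operator, and dominate the resulting symbol boundary term by $c\int_{\partial B}|\xi|^2\,dS$ using that the principal symbol has pointwise norm at most the maximal diffusion speed $c$. That part is sound (modulo the wording: the two boundary terms do not cancel, the moving-boundary term dominates the symbol term, which is what gives $E'(t)\le 0$), and the regularity remarks via Stone's theorem are adequate.

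The gap is in the deduction of the support statement. You establish monotonicity only for $\theta$ smaller than the injectivity radius at the center --- this smallness is precisely what justifies your smooth geodesic spheres and the divergence theorem --- but you then invoke it centered at $y$ with parameter $\rho=d(x,y)-r-\delta$, which can be arbitrarily large (the leaves here are complete noncompact manifolds with cylindrical ends, so $d(x,y)$ is unbounded and large geodesic spheres generally meet the cut locus and fail to be smooth hypersurfaces). Even if you shrink $\rho$ to just above $ct$, which is all the disjointness from $B(x,r)$ requires, that is still not small once $t$ is large, so the hypothesis of your own monotonicity statement is violated exactly where it is applied. Two standard repairs are available: (i) remove the smallness restriction from the energy estimate by replacing the smooth-sphere computation with the coarea formula for the Lipschitz function $d(y,\cdot)$, whose gradient has norm at most $1$ almost everywhere, which is all the boundary estimate uses; or (ii) keep the small-$\theta$ estimate, obtain the support inclusion only for times with $ct$ below the smallness threshold, and then iterate in time via the group law $e^{i(t+s)P}=e^{itP}e^{isP}$: after each short step $\tau$ the support is again contained in a concentric ball $\overline{B(x,r+c\tau)}$, so the steps compose and give $\operatorname{supp}(e^{itP}\xi_0)\subset \overline{B(x,r+ct)}$ for all $t$. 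Route (ii) is what the paper's remark ``then the second step follows easily'' (after Roe) refers to; without one of these two ingredients your argument does not go through as written.
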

\noindent The proof is in J. Roe's book \cite{Roel} Prop. 5.5 and lemma 5.1.. First one proves that for a small geodesic ball of radius $r$ the function $\|e^{itP}\xi_0\|_{L^2(B(x,r-ct))}$ is decreasing. This is called the \underline{energy estimate}; then the second step follows easily.

\noindent For operators with finite propagation speed one has the representation formula in terms of the inverse Fourier transform 
\begin{equation}\label{fourier rep}f(P)=\int_{\R}\hat{f}(t)e^{itP}{dt}/{2\pi}.\end{equation} The integral converges in the weak operator topology, namely
$\langle f(P)x,y\rangle=\int  \hat{f}(t)\langle e^{itP}x,y\rangle {dt}/{2\pi},$ for every $x,y\in L^2(X;E)$. If $X=S^1$ this is just Poisson summation formula.

\noindent Now formula \eqref{fourier rep} leads us to an easy method to obtain pointwise extimates of the Schwartz kernel $[f(P)]$ for a class Schwartz function $f$. In fact due to the ellipticity of $P$, $f(P)$ is a uniformly smoothing operator and $[f(P)] \in UC^{\infty}(X\times X;\operatorname{End}(E))$.
\begin{prop}
Take some section $\xi \in L^2(X;E)$ supported into a geodesic ball $B(x,r)$
then the following estimate holds true
\begin{equation}\label{base}
\|f(P)\xi\|_{L^2(X- B(x,R))}\leq (2\pi)^{-1/2}\|\xi\|_{L^2(X)}\int_{\R-I_R}|\hat{f}(s)|ds,\end{equation}
where 

\noindent $I_R:=(-\frac{r-R}{c},\frac{r-R}{c})$ with the convention that $I_R=\emptyset$ if $R\leq r.$
\end{prop}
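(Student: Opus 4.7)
The plan is to combine the Fourier representation \eqref{fourier rep} with finite propagation speed and an application of Minkowski's integral inequality. First I would insert $\xi$ into \eqref{fourier rep} and split the integral over $\R$ according to the decomposition $\R=I_R\sqcup(\R\setminus I_R)$:
$$f(P)\xi=\frac{1}{2\pi}\int_{I_R}\hat{f}(t)\,e^{itP}\xi\,dt+\frac{1}{2\pi}\int_{\R\setminus I_R}\hat{f}(t)\,e^{itP}\xi\,dt.$$
For $t\in I_R$ one has $c|t|\leq R-r$ (once $R>r$; if $R\leq r$ the interval $I_R$ is empty and the first term is absent), so the finite propagation speed lemma stated just above gives $\operatorname{supp}(e^{itP}\xi)\subset B(x,r+c|t|)\subset B(x,R)$. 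Hence the first integrand vanishes identically on $X\setminus B(x,R)$, and after restriction only the second integral survives.

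Next I would apply Minkowski's integral inequality in the Bochner sense for the $L^2(X\setminus B(x,R);E)$-norm, using that $e^{itP}$ is unitary by Stone's theorem, so that $\|e^{itP}\xi\|_{L^2(X\setminus B(x,R))}\leq\|e^{itP}\xi\|_{L^2(X)}=\|\xi\|_{L^2(X)}$:
$$\|f(P)\xi\|_{L^2(X\setminus B(x,R))}\leq \frac{1}{2\pi}\int_{\R\setminus I_R}|\hat{f}(t)|\,\|e^{itP}\xi\|_{L^2(X)}\,dt=\frac{1}{2\pi}\int_{\R\setminus I_R}|\hat{f}(t)|\,dt\cdot\|\xi\|_{L^2(X)}.$$
The constant $(2\pi)^{-1/2}$ appearing in the statement (rather than $(2\pi)^{-1}$) corresponds to the symmetric Fourier convention $\hat{f}(t)=(2\pi)^{-1/2}\int f(x)e^{-itx}dx$, under which \eqref{fourier rep} reads $f(P)=(2\pi)^{-1/2}\int\hat{f}(t)e^{itP}dt$; either convention used consistently delivers the claimed bound verbatim.

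The only real technical item is the justification of Bochner integrability of $t\mapsto \hat{f}(t)e^{itP}\xi$ in $L^2(X;E)$, required in order to move the $L^2$-norm inside the integral. This is immediate: the map $t\mapsto e^{itP}\xi$ is strongly continuous (again by Stone's theorem), hence weakly measurable and separably valued, and its norm is the constant $\|\xi\|_{L^2}$, so the integrand is dominated by the $L^1$ function $|\hat{f}(t)|\,\|\xi\|_{L^2}$ as soon as $\hat{f}\in L^1(\R)$ (and for Schwartz $f$ one has even better decay). Thus no substantial obstacle arises; the estimate is essentially a finite-propagation localization of the Fourier inversion formula, with the key leverage being that unitarity of $e^{itP}$ makes Minkowski sharp enough to absorb the $\|\xi\|_{L^2}$ factor without loss.
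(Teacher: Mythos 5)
Your argument is correct and is exactly the proof the paper has in mind: the paper itself defers to Vaillant here, but reproduces precisely this computation (split \eqref{fourier rep} over $I_R$ and its complement, kill the near part by finite propagation speed, then Minkowski plus unitarity of $e^{itP}$) in the cylindrical analogue a few lines later. Your two side remarks are also the right readings of the text: the interval is intended to be $\bigl(-\tfrac{R-r}{c},\tfrac{R-r}{c}\bigr)$ (the printed $\tfrac{r-R}{c}$ is a sign slip, as the stated convention and the later definition of $I(x,y)$ confirm), and the factor $(2\pi)^{-1/2}$ versus the $dt/2\pi$ in \eqref{fourier rep} is just the symmetric Fourier normalization used elsewhere in the paper.
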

\begin{proof}See \cite{Vai}
\end{proof}
\noindent So the point of view is the following;

\noindent 1. Mapping properties of $f(D)$ will lead to pointwise estimates on the Schwartz kernel of $f(D)$ \cite{ChGrTa}. More precisely; start with a compactly supported section $s$, suppose we can extimate the $L^2$ norm of the image 
$f(D)s$ on a small ball $B$ at some distance $d$ from the support, then by elliptic regularity (G\"arding inequality) and Sobolev embeddings we can extimate the kernel $[f(D)]$ pointwisely.

\noindent 2. This $L^2$ norm, $\|f(D)s\|_{L^2(B)}$ is extimated in terms of the $L^1$ norm of the Fourier transform $\|\hat{f}\|_{L^1(\R)}$. As $d$ increases we can cut large and large intervals around zero in $\R$. This means that the relevant norm becomes $\|\hat{f}\|_{L^1(\R-I_d)}$ where $I_d$ is an interval containing zero.
 The limit case of this phenomenon says that spectral functions made by functions with compactly supported Fourier transforms will produce \underline{properly supported operators} i.e. operators whose kernel lies within a $\delta$--neighborhood of the diagonal. For an application of finite propagation speed in Foliations one can look at the paper \cite{Roeff} where is showen that spectral functions $f(D)$ where $f$ has compactly supported Fourier transform belong to the $C^*$--algebra of the foliation.
\noindent Estimate \eqref{base} is the starting point. In the following proposition pointwise estimates on the Schwartz kernel are worked out from this mapping properties. This is a very rough version of the ideas contained in \cite{ChGrTa}. A complete proof in \cite{Vai}.
\begin{prop}\label{est}
Let $r_1>0$ sufficiently small, $x,y\in X$ put $$R(x,y):=\max\{0,d(x,y)-r_1\}$$ and $\bar{n}:=[n/2+1]$, $n=\operatorname{dim}X$, $I(x,y):=(-R(x,y)/c,R(x,y)/c)$. For a Schwartz class function $f\in \mathcal{S}(\R)$
\begin{equation}\label{schw}\Big{|}\nabla_x^l\nabla_y^k[f(P)]_{(x,y)}\Big{|}\leq \mathcal{C}(P,l,k,r_1)\sum_{j=0}^{2\bar{n}+l+k}\int_{\R-I(x,y)}\Big{|}\hat{f}^{(j)}(s)\Big{|}ds.\end{equation}
\end{prop}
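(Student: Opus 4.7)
The plan is to combine the basic $L^2$ estimate \eqref{base} with Sobolev embedding and elliptic regularity (Gårding's inequality) in the two kernel variables separately, all controlled uniformly thanks to bounded geometry. I will reduce pointwise control of $\nabla_x^l\nabla_y^k[f(P)]_{(x,y)}$ to local $L^2$ control of suitable operator kernels, and then convert those to Fourier-side estimates via the identity $P^a f(P) P^b = P^{a+b}f(P) = (\lambda^{a+b}f)(P)$.

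First, in the bounded geometry setting, fix $r_1$ small enough that balls $B(z,r_1)$ are uniformly totally normal. For any smooth section $u$ defined near $z$, Sobolev embedding gives
$$|\nabla^m u(z)| \leq C(r_1,n)\,\|u\|_{H^{m+\bar n}(B(z,r_1))},\qquad \bar n=[n/2+1],$$
with $C$ independent of $z$. Applying this separately in the $x$-variable (for $m=l$) and the $y$-variable (for $m=k$) to the smooth kernel $K(x,y):=[f(P)]_{(x,y)}$ bounds $|\nabla_x^l\nabla_y^k K(x,y)|$ by a constant times the mixed Sobolev norm of $K$ on $B(x,r_1)\times B(y,r_1)$. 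Then uniform leafwise Gårding for the elliptic self-adjoint $P$ in each variable reduces this to
$$|\nabla_x^l\nabla_y^k K(x,y)| \leq \mathcal{C}\sum_{a\leq \bar n+l}\sum_{b\leq \bar n+k}\bigl\|P_x^a P_y^b K\bigr\|_{L^2(B(x,r_1))\otimes L^2(B(y,r_1))},$$
where the norm on the right is the $L^2$-operator kernel norm on the two balls.

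Second, since $P$ is formally self-adjoint and commutes with $f(P)$, the kernel $P_x^a P_y^b K$ is the kernel of $P^a f(P) P^b = P^{a+b}f(P) = (p^{a+b}f)(P)$ with $p^{j}f(\lambda):=\lambda^j f(\lambda)$. For such a local $L^2\to L^2$ operator kernel norm between two $r_1$-balls at $x$ and $y$, the basic estimate \eqref{base} applied to compactly supported test sections $\xi$ in $B(y,r_1)$ yields
$$\bigl\|(p^{j}f)(P)\xi\bigr\|_{L^2(X\setminus B(x,R))} \leq (2\pi)^{-1/2}\|\xi\|_{L^2}\int_{\R\setminus I_R}\bigl|\widehat{p^{j}f}(s)\bigr|\,ds,$$
with $R=R(x,y)=\max\{0,d(x,y)-r_1\}$. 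Since the Fourier transform converts multiplication by $\lambda^{j}$ into $(-i\partial_s)^{j}$, one has $|\widehat{p^{j}f}(s)|=|\hat f^{(j)}(s)|$, and the operator kernel norm on $B(x,r_1)\times B(y,r_1)$ is majorized by $\int_{\R\setminus I(x,y)}|\hat f^{(j)}(s)|\,ds$ with $I(x,y)=(-R(x,y)/c,R(x,y)/c)$.

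Putting everything together and summing over $j=a+b$ from $0$ up to $2\bar n+l+k$ gives the desired bound \eqref{schw}. The main technical obstacle is the uniformity of the constants in $\mathcal{C}(P,l,k,r_1)$: one must verify that the Sobolev embedding constant and the Gårding constant of $P$ in every geodesic ball of radius $r_1$ can be taken independent of the centre, which follows from the bounded geometry assumption on the leaves and the uniform ellipticity of $P$. A minor additional care is needed to handle commutators of $P$ with the implicit cut-offs used to pass from local-on-a-ball to global estimates; the commutators are of lower order and get absorbed in the constant $\mathcal{C}(P,l,k,r_1)$.
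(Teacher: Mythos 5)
Your overall route --- the basic finite propagation estimate \eqref{base}, then Sobolev embedding and uniform interior elliptic (G{\aa}rding) estimates in each kernel variable, with the derivative count $2\bar n+l+k$ coming from $\lambda^{j}\leftrightarrow(-i\partial_s)^{j}$ under the Fourier transform --- is exactly the Cheeger--Gromov--Taylor scheme the paper has in mind (it defers the details to Vaillant), and the uniformity issues you flag are indeed settled by bounded geometry. But one step does not hold as written: after reducing to $\bigl\|P_x^{a}P_y^{b}K\bigr\|_{L^2(B(x,r_1)\times B(y,r_1))}$ you assert that this quantity is majorized via \eqref{base}. What you have produced is the $L^2$ norm of the localized kernel, i.e.\ a Hilbert--Schmidt type norm of $\chi_{B(x,r_1)}(p^{j}f)(P)\chi_{B(y,r_1)}$, whereas \eqref{base} only controls the $L^2\to L^2$ operator norm of $(p^{j}f)(P)$ from sections supported in one ball to a region at positive distance from it; the Hilbert--Schmidt norm of a localized smoothing operator is not dominated by its operator norm, and forcing such a bound (say by inserting factors $(1+P^2)^{-\bar n}$ to regularize delta functions) delocalizes the supports and destroys exactly the truncation of the $s$-integral to $\R\setminus I(x,y)$ that you are after.

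The standard repair stays entirely inside your toolbox but treats the two variables asymmetrically instead of through a product-Sobolev norm of the kernel. For $\psi$ supported in $B(y,r_1)$ with $\|\psi\|_{L^2}\le 1$, apply the uniform Sobolev/G{\aa}rding estimate only in the $x$-variable to $(p^{b}f)(P)\psi$, getting $|\nabla_x^{l}((p^{b}f)(P)\psi)(x)|\le C\sum_{a\le\bar n+l}\|(p^{a+b}f)(P)\psi\|_{L^2(B(x,r_1))}$, and bound each term by \eqref{base}. Since $\nabla_x^{l}\bigl((p^{b}f)(P)\psi\bigr)(x)=\langle P_y^{b}\nabla_x^{l}K(x,\cdot),\overline{\psi}\rangle$ by formal self-adjointness (the integration by parts is legitimate because $K(x,\cdot)$ lies in every Sobolev space --- the very point the paper makes in its cylindrical analogue via $\delta_y\in H^{-\bar n}$ and the mapping properties of $f(P)$), taking the supremum over such $\psi$ controls $\|P_y^{b}\nabla_x^{l}K(x,\cdot)\|_{L^2(B(y,r_1))}$, and a second application of the local elliptic estimate, now in the $y$-variable, yields the pointwise bound with the same total count $2\bar n+l+k$. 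A final bookkeeping point: with source ball $B(y,r_1)$ and target ball $B(x,r_1)$ the separation is $d(x,y)-2r_1$, so your argument as written gives $R=\max\{0,d(x,y)-2r_1\}$; run it with balls of radius $r_1/2$ (or rename $r_1$) to land on the stated $I(x,y)$, which is harmless since the constant is allowed to depend on $r_1$.
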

\noindent In the special case of the \underline{heat} \underline{kernel} $[f(P)]=[e^{-tP^2}]$ when $f(x)=e^{-tx^2}$, $\hat{f}(s)=(2t)^{-1/2}e^{-s^2/4t}$,
one can use the Hermite polynomials to write the derivatives of $\hat{f}$. this leads to the well known estimates 
\begin{align}
\label{heats}
|\nablal \nablak [P^me^{-tP^2}]_{(x,y)}| \leq 
\Bigg{\{}{\begin{array}{c}\mathcal{C}(k,l,m,P)t^{-m/2}e^{-R^2/6c^2t},\,\,\,t>T \\   
\mathcal{C}(k,l,m,P)e^{R^2/6c^2t},\,\,\,d(x,y)>2r_1
\end{array}t\in \R^{+}.}
\end{align}
There's also a relative version of Proposition \ref{est} in which two \underline{differential}, formally self--adjoint uniformly elliptic operators $P_1$ and $P_2$ are considered. More precisely relative means that $P_1$ acts on 
$E_1\longrightarrow X_1$ and $P_2$ acts on 
$E_2\longrightarrow X_2$ with open sets $U_1 \subset X_1$, $U_2\subset X_2$ and isometries 
$\varphi:U_1\longrightarrow U_2$ and 
$\Phi:{E_1}_{|U_1}\longrightarrow  {E_2}_{|U_2}$ with 
$\Phi \circ \varphi=\varphi \circ \Phi$
making possible to identify $P_1$ with $P_2$ upon $U=U_1=U_2$ i.e.
$\Phi( P_1 s)=P_2( \Phi s),\quad s\in C^{\infty}_c(U_1;E_1)$ where $\Phi$ is again used to denote the induced mapping on sections

\noindent  $\Phi:C^{\ty}_c(U_1;E_1)\longrightarrow C^{\ty}_c(U_2;E_2),\,(\Phi s)(y):=\Phi_{\varphi^{-1}(y)}s(\varphi^{-1}(y)).$ Thanks to the identification one calls $P=P_1=P_2$ over $U$.
\noindent Then the relative version of the estimate \eqref{schw} is contained in the following proposition.
\begin{prop}\label{asd}
Choose $r_2>0$ and let \underline{$x,y$ be in $U$}. Set $J(x,y):=\Big{(}\dfrac{-Q(x,y)}{c},\dfrac{Q(x,y)}{c}\Big{)}$ where
$Q(x,y):=\max\{\min \{d(x,\partial U);d(y,\partial U) \}-r_2;0\}.$ 
For a class Schwartz function $f\in \mathcal{S}(\R)$,
$$|\nablal \nablak ([f(P_1)]-[f(P_2)])_{(x,y)}|\leq \mathcal{C}(P_1,k,l,r_2)\sum_{j=0}^{2\bar{n}+l+k}\int_{\R-J(x,y)}|\hat{f}^{(j)}(s)|ds. $$ More precisely the reason of the dependence of the constant only to $P_1$ is that it depends upon ${P_1}_{|U}$ where the operators coincide.
\end{prop}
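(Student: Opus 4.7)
The plan is to mimic the proof of Proposition \ref{est} (the absolute version), with the central modification coming from locality of finite propagation speed on the common region $U$. I start from the inverse Fourier representation
$$f(P_i) = \frac{1}{2\pi}\int_{\R} \hat{f}(t)\, e^{itP_i}\, dt, \qquad i=1,2,$$
and split each integral as $\int_{J(x,y)} + \int_{\R\setminus J(x,y)}$. The key observation is that, after using $\Phi$ to identify sections supported in $U_1$ with sections in $U_2$, the wave groups $e^{itP_1}$ and $e^{itP_2}$ coincide on any section whose $|t|c$-neighborhood of support stays inside $U$. This is exactly the energy/finite propagation estimate of \cite{Roel} applied to the difference $e^{itP_1}\xi - e^{itP_2}\Phi\xi$, which satisfies the wave equation with zero initial data inside $U$.

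Next I would derive the relative analogue of the basic $L^2$ estimate \eqref{base}. Fix $r_1<r_2$ sufficiently small, pick a bump $\xi$ supported in a small geodesic ball $B(x,r_1)\subset U$, and consider any test section $\eta$ supported in $B(y,r_1)\subset U$. For $|t|<Q(x,y)/c$ the supports of $e^{itP_1}\xi$ and $e^{itP_2}\Phi\xi$ (transported via $\Phi$) still lie inside $U$, so their contributions to $\langle(f(P_1)-f(P_2))\xi,\eta\rangle$ cancel. Thus only $t\in \R\setminus J(x,y)$ contributes, yielding
$$\|(f(P_1)-f(P_2))\xi\|_{L^2(B(y,r_1))} \leq (2\pi)^{-1/2}\|\xi\|_{L^2}\int_{\R\setminus J(x,y)}|\hat f(s)|\,ds.$$

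From these $L^2$ mapping bounds I pass to pointwise kernel bounds using the Cheeger--Gromov--Taylor machinery already invoked in Proposition \ref{est}: by ellipticity, the Schwartz kernel of $f(P_i)$ is smooth, and Gårding's inequality combined with Sobolev embedding of order $\bar n=[n/2+1]$ in each variable lets one control $\nabla_x^l\nabla_y^k$ of the kernel by the $L^2\to L^2$ norms of operators of the form $P_1^a(f(P_1)-f(P_2))P_1^b$ with $a+b\leq 2\bar n+l+k$. Each power of $P_1$ translates, under Fourier, into a derivative of $\hat f$, producing the sum $\sum_{j=0}^{2\bar n+l+k}\int_{\R\setminus J(x,y)}|\hat f^{(j)}(s)|\,ds$. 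The constant then collects Gårding constants for $P_1$ restricted to $U$; since by the isometric identification $P_1|_U = \Phi^{-1} P_2|_U \Phi$, these equal the corresponding constants for $P_2$, which justifies writing the dependence as $\mathcal{C}(P_1,k,l,r_2)$.

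The main obstacle is the bookkeeping of cutoffs: the radius $r_2$ must be large enough to accommodate both the finite propagation buffer (so that for $t\in J(x,y)$ the waves actually stay in $U$) and the Sobolev embedding cutoffs needed to go from $L^2$ mapping estimates to pointwise kernel estimates. Choosing $r_1$ slightly smaller than $r_2$ and absorbing differences into the final constant handles this. The other subtle point, already standard in \cite{ChGrTa}, is verifying that applying $\nabla^l_x\nabla^k_y$ to the distributional kernel really does reduce to the above $L^2$ mapping property on a pair of localizing bumps; this is the content of the Sobolev/Gårding argument behind Proposition \ref{est}, and the present relative statement follows formally by replacing $f(P)$ everywhere with $f(P_1)-f(P_2)$ and localizing to $U$.
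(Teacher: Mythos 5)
Your proposal is correct and follows essentially the route the paper itself relies on: Proposition \ref{asd} is stated there without proof, deferring (as for Proposition \ref{est}) to \cite{Vai} and \cite{ChGrTa}, whose argument is precisely your combination of the Fourier representation \eqref{fourier rep}, finite propagation speed plus uniqueness for the wave equation (so that $\Phi e^{itP_1}\xi=e^{itP_2}\Phi\xi$ for $t\in J(x,y)$, making that range cancel in the difference), and the G{\aa}rding--Sobolev bootstrap that converts localized $L^2$ bounds for $\chi_1 P^{a}\bigl(f(P_1)-f(P_2)\bigr)P^{b}\chi_2$, $a+b\le 2\bar{n}+l+k$, into pointwise bounds on $\nabla_x^l\nabla_y^k$ of the kernel difference. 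The only points to tidy are notational: the compositions with powers of the operator must be taken with cutoffs supported in $U$, where $P_1=P_2$, so that $P^{a}f(P_i)P^{b}=g(P_i)$ with $g(\lambda)=\lambda^{a+b}f(\lambda)$ and $\hat g$ proportional to $\hat f^{(a+b)}$, and your numerical constant in the intermediate $L^2$ estimate is off by a harmless factor; both are absorbed into $\mathcal{C}(P_1,k,l,r_2)$.
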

\begin{prop}\label{rel12}
\noindent The relative version of \eqref{heats} is
\begin{align*}
|\nablal \nablak ([P_1^me^{-tP_1^2}]-[P_2^m &e^{-tP_2^2}])_{(x,y)}|  \nonumber \leq \Bigg{\{}{\begin{array}{c}\mathcal{C}(k,l,m,P_1)t^{-m/2}e^{-Q(x,y)^2/6c^2t},\,t>T \\   
\mathcal{C}(k,l,m,P_1)e^{-Q(x,y)^2/6c^2t},\,t>0
\end{array}}
\end{align*}
for $x,y\in U$, $d(x,\partial U)$, $d(y,\partial U)>r_2$. 
\end{prop}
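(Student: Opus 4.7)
The plan is to apply Proposition \ref{asd} to the specific function $f(x)=x^{m}e^{-tx^{2}}$, exactly in the same spirit in which \eqref{heats} follows from \eqref{schw} in the non--relative case. This is legitimate because $f\in \mathcal{S}(\R)$ for every fixed $t>0$ and because the hypothesis $x,y\in U$ with $d(x,\partial U),d(y,\partial U)>r_{2}$ forces $J(x,y)\neq \emptyset$, so the integral domain $\R\setminus J(x,y)$ is contained in $\{|s|>Q(x,y)/c\}$.

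The first step is a direct computation of the Fourier transform. Since $\widehat{e^{-tx^{2}}}(s)=(2t)^{-1/2}e^{-s^{2}/4t}$, applying $(i d/ds)^{m}$ on the Fourier side and differentiating $j$ times yields an identity of the form
\begin{equation*}
\hat{f}^{(j)}(s)=t^{-(m+1)/2}\,p_{m,j}\!\left(s/\sqrt{t}\right)\,e^{-s^{2}/4t},
\end{equation*}
where $p_{m,j}$ is a polynomial of degree $m+j$ (a Hermite polynomial, up to normalization). Then I would apply Proposition \ref{asd}, obtaining
\begin{equation*}
\big|\nablal \nablak([f(P_{1})]-[f(P_{2})])_{(x,y)}\big|\leq \mathcal{C}(P_{1},l,k,r_{2})\sum_{j=0}^{2\bar n+l+k}\int_{|s|>Q(x,y)/c}t^{-(m+1)/2}\big|p_{m,j}(s/\sqrt{t})\big|\,e^{-s^{2}/4t}\,ds.
\end{equation*}

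The next step is the standard Gaussian tail estimate. Splitting $e^{-s^{2}/4t}=e^{-s^{2}/6t}\cdot e^{-s^{2}/12t}$ and using $e^{-s^{2}/6t}\leq e^{-Q(x,y)^{2}/6c^{2}t}$ on the domain $|s|>Q(x,y)/c$, one pulls the exponential decay out of the integral. After the change of variable $u=s/\sqrt{t}$ the remaining integral $\int |p_{m,j}(u/2)| e^{-u^{2}/12}\,du$ is a finite constant depending only on $m,j$, and the overall $t$--powers combine to give a factor $t^{-m/2}$. This produces the estimate for $t>T$. For the range $t>0$ one simply absorbs all positive powers of $t$ into the constant (this is legitimate as soon as $t$ is bounded above; the two ranges are then joined in the customary way, cf.\ \eqref{heats}), obtaining the unrestricted bound without the $t^{-m/2}$ prefactor.

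The only real point requiring care is the bookkeeping of the $t$--dependence: one must check that the polynomial factor $p_{m,j}(s/\sqrt{t})$ contributes powers of $t^{-1/2}$ that exactly cancel against the factor $t^{-(m+1)/2}$ upon integration, leaving the clean $t^{-m/2}$ of the first alternative; this is precisely the same mechanism already used to pass from \eqref{schw} to \eqref{heats}, and the bound then depends only on the operator $P_{1}$ (equivalently on $P_{2}$) restricted to $U$ because the constant in Proposition \ref{asd} does, which is the last claim in the statement.
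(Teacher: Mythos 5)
Your route is the one the paper intends (it prints no proof of Proposition \ref{rel12}, deferring to \cite{Vai}): apply Proposition \ref{asd} to $f(x)=x^{m}e^{-tx^{2}}$, write $\hat f^{(j)}$ via Hermite polynomials, and estimate the Gaussian tail over $\R\setminus J(x,y)$, exactly as \eqref{heats} is obtained from \eqref{schw}. Two points in your execution need repair, the second being a genuine gap.

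First, bookkeeping: every $s$-derivative of $p(s/\sqrt t)\,e^{-s^{2}/4t}$ produces an extra factor $t^{-1/2}$, so the correct identity is $\hat f^{(j)}(s)=t^{-(m+1+j)/2}\,p_{m,j}(s/\sqrt t)\,e^{-s^{2}/4t}$, not $t^{-(m+1)/2}p_{m,j}(s/\sqrt t)e^{-s^{2}/4t}$; after the substitution $u=s/\sqrt t$ (which contributes a single $t^{1/2}$, nothing ``cancels'' against $t^{-(m+1)/2}$) the $j$-th term gives $t^{-(m+j)/2}e^{-Q(x,y)^{2}/6c^{2}t}$. This is harmless for the first alternative, since $t^{-j/2}\le T^{-j/2}$ for $t>T$, but it feeds into the second.

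Second, your derivation of the bound valid for all $t>0$ fails as written: the leftover factors $t^{-(m+j)/2}$ are \emph{negative} powers of $t$, which blow up as $t\downarrow 0$, so they cannot be ``absorbed into the constant because $t$ is bounded above''. The correct mechanism --- the same one hidden in the second line of \eqref{heats}, and the reason that line carries the hypothesis $d(x,y)>2r_{1}$ --- is to sacrifice a further slice of the Gaussian: write $e^{-s^{2}/4t}=e^{-s^{2}/6t}\,e^{-s^{2}/24t}\,e^{-s^{2}/24t}$, use the first factor on $|s|>Q/c$ to produce $e^{-Q^{2}/6c^{2}t}$, the second to keep the $u$-integral finite, and the third, again restricted to $|s|>Q/c$, to dominate the negative powers of $t$ through $\sup_{x>0}x^{a}e^{-x}<\infty$, giving $t^{-(m+j)/2}e^{-Q^{2}/24c^{2}t}\le C\,(c^{2}/Q^{2})^{(m+j)/2}$. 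This is uniform only if $Q(x,y)$ is bounded away from zero; the stated hypothesis $d(x,\partial U),d(y,\partial U)>r_{2}$ only gives $Q>0$, so one should either strengthen it to distances $>2r_{2}$ (so that $Q\ge r_{2}$), mirroring $d(x,y)>2r_{1}$ in \eqref{heats}, or let the constant depend on a lower bound for $Q$. With that adjustment your argument closes and coincides with the paper's.
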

\subsection{The cylindrical case}
In this section our manifold $L$ will be the generic leaf of the foliation i.e.  start with a manifold with bounded geometry $L_0$ with boundary $\partial L_0$ composed of possibly infinite connected components and a product type Riemannian metric near the boundary. Glue an infinite cylinder $Z_0=\partial L_0 \times [0,\infty)$ with product metric and denote $L:= L_0 \cup_{\partial L_0} Z_0$. Let $E\longrightarrow L$ be an Hermitian Clifford bundle. Every notation of section \ref{geom} is keeped on. Recall that $E_{|Z_0}=F\oplus F$.
\begin{dfn}
We say that a first order uniformly elliptic (formally) selfadjoint operator $T\in \operatorname{Op}^1(L;E)$ has product structure if
\begin{enumerate}
\item $T$ restricts to $L_0$ and $Z_0$ i.e. $\operatorname{supp}(T s)\subset L_0 (Z_0)$ if $s$ is supported on $L_0\,(Z_0).$
\item $T_{|L_0}$ is a uniformly elliptic \underline{differential} operator.
\item $T$ restricts to the cylinder to have the form 
$$T_{|Z_0}=c(\partial_r)\partial_r+\Omega B(r)=\left(\begin{array}{cc}0 & B(r)-\partial_r  \\B(r)+\partial(r) & 0 \end{array}\right)$$
 for a smooth mapping $B:\R^+\longrightarrow \operatorname{Op}^1(\partial L_0;E)$ with values on the subspace of uniformly elliptic and selfadjoint operators. Furthermore suppose that $B(r)\cong B$ is constant for $r\geq 2$.
\end{enumerate} 
\end{dfn}
\noindent  However this is only a model embracing our Breuer--Fredholm perturbation of the Dirac operator in fact 
\begin{equation}\label{asc}(D_{\epsilon,u,x})_{|\partial_x \times \R^+}=c(\partial_r)\partial_r+\Omega\underbrace{(\dot{\theta}u-\dot{\theta}D^{\mathcal{F}_{\partial}}\Pi_{\epsilon}+D^{\mathcal{F}_{\partial}})}_{B(r)}.\end{equation} In this sense every result from here to the end of the section has to be thought applied to $D_{\epsilon,u}.$

\noindent Some words about the smoothness condition on the mapping $B$. Here we shall make use only of pseudodifferential operators with uniformly bounded symbols, (almost everywhere they will be smoothing operators) hence the smoothness condition of the family is the usual one. In particular this is the smoothness of the family of operators acting on the fibers of $\partial L_0\times \R^+\longrightarrow \R^+$, $B(t)\in \operatorname{Op}^1(\partial L_0\times \{t\};E)$. If $U$ is a coordinate set for $\partial L_0$ such a family is determined by a smooth mapping $p:\R^+\longrightarrow \operatorname{S}_{\operatorname{hom}}^1(U)$ in the space of polihomogeneous symbols. Here smooth means that each derivative $t\longmapsto {d^k \sigma}/{dt^k}$ is continuous as a mapping with values in the space of symbols (with the symbols topology, see \cite{Va})
\noindent Again the spectral theorem shows that for a compactly supported section $\xi_0\in C^{\ty}_c(L;E)$ there is a unique solution $t\mapsto \xi(t)$ of the Cauchy problem \eqref{waveproblem} for the wave equation associated with $T$. This solution is
given by the application of the wave one parameter group $e^{itT}$ 
with the same properties written above in the standard case.
\begin{prop}\label{59}{\bf{ Cylindrical finite propagation speed.}} Let $U=\partial L_0\times (a,b)$ $0<a<b$ and $B(U,l)=\{x\in L:d(x,U)<l\}$. For $\xi_0\in C^{\ty}_c(L;E)$ let $\xi(t)=e^{itT}\xi_0$ the solution of the wave equation. If $\alpha<a$ the function $\|\xi(t)\|_{L^2(B(U,\alpha-t))}$ is not increasing in $t$. In particular $\operatorname{supp}(\xi_0)\subset U \Longmapsto \operatorname{supp}(\xi(t))\subset B(U,t).$
\end{prop}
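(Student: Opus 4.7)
The argument is a direct cylindrical adaptation of the classical energy method for finite propagation speed (in the form of \cite{Roel}, Prop.~5.5 and Lem.~5.1, cited earlier). Two geometric observations reduce the problem to the standard computation, and a third observation is a routine symbol bound.

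\emph{Step 1: localization to the cylinder.} I first check that $B(U,\alpha-t)\subset Z_0$ for all $t\in[0,\alpha]$. Every path from a point of $L_0$ to $U=\partial L_0\times(a,b)$ must traverse the collar $\partial L_0\times[0,a]$, so any point of $L_0$ has distance $\geq a$ from $U$; since $\alpha-t\leq \alpha<a$, the ball stays inside $Z_0$. Consequently only the product form $T_{|Z_0}=c(\partial_r)\partial_r+\Omega B(r)$ enters the estimate.

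\emph{Step 2: bound on the maximal diffusion speed.} On $Z_0$ the principal symbol of $T$ at a covector $\zeta=\zeta_r\,dr+\zeta_\partial$ is $c(\partial_r)\zeta_r+\Omega\,\sigma_{\operatorname{pr}}(B(r))(\zeta_\partial)$. By \eqref{asc}, $\Omega\,\sigma_{\operatorname{pr}}(B(r))$ coincides with Clifford multiplication along the cross-section. Since $c(\partial_r)$ and $\Omega\,\sigma_{\operatorname{pr}}(B(r))(\zeta_\partial/|\zeta_\partial|)$ are unit-norm skew-adjoint endomorphisms that anticommute on $F\oplus F$, the symbol has operator norm $\sqrt{\zeta_r^2+|\zeta_\partial|^2}=|\zeta|$, so the maximal diffusion speed satisfies $c(T)\leq 1$, uniformly in $r$.

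\emph{Step 3: energy estimate.} For $\xi_0\in C^\infty_c(L;E)$, Stone's theorem gives $\xi(t)=e^{itT}\xi_0\in\operatorname{Dom}(T)$ and ellipticity makes each $\xi(t)$ smooth. Set $\Omega_t:=B(U,\alpha-t)$. Green's formula on $\Omega_t$ combined with the formal self-adjointness of $T$ yields
$$\frac{d}{dt}\|\xi(t)\|^2_{L^2(\Omega_t)}=-\int_{\partial\Omega_t}\bigl(\langle\sigma_{\operatorname{pr}}(T)(\nu)\xi,\xi\rangle+|\xi|^2\bigr)\,d\sigma,$$
where $\nu$ is the outward unit conormal to $\partial\Omega_t$ and the $|\xi|^2$ term reflects the unit retreat speed of the level sets of $d(\cdot,U)$. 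Step~2 gives $|\langle\sigma_{\operatorname{pr}}(T)(\nu)\xi,\xi\rangle|\leq|\xi|^2$, so the integrand is non-negative and the derivative is $\leq 0$. The support statement follows by time-reversal: if $x_0\notin B(U,t_0)$, apply the same monotonicity to a small ball around $x_0$ for the backward flow, which receives no mass from $U$ in time $t_0$ because at time $t_0$ the shrinking domain is disjoint from $\operatorname{supp}\xi_0\subset U$.

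\emph{Main obstacle.} The derivative computation is only formal because $\partial\Omega_t$ is Lipschitz (not smooth): $d(\cdot,U)$ is $1$-Lipschitz but has no classical normal. The standard remedy is to replace the characteristic function of $\Omega_t$ by the smooth cutoff $\varphi_\epsilon(d(\cdot,U)-(\alpha-t))$ for a non-increasing $\varphi_\epsilon:\mathbb{R}\to[0,1]$, compute the derivative (which then has no boundary term), and pass to $\epsilon\downarrow 0$. The pointwise bound $|\nabla d|\leq 1$ a.e., valid for any Lipschitz distance function, combined with the symbol estimate of Step~2, delivers exactly the non-positivity asserted above. This is the same technical device used in the bounded-geometry proof referenced in \cite{Roel}, and Steps~1--2 show that nothing beyond it is needed in the cylindrical setting.
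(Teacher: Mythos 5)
There is a genuine gap: your Steps 2--3 treat $T_{|Z_0}$ as if it were a \emph{local} (differential) operator, and that is exactly what it is not. In the definition of product structure $B(r)$ is only a pseudodifferential operator on the cross--section, and in the intended application \eqref{asc} it contains the non--local term $\dot{\theta}(u-D^{\mathcal{F}_{\partial}}\Pi_{\epsilon})$ with $\Pi_{\epsilon}$ a spectral projection. For such an operator Green's formula does not reduce $\int_{\Omega_t}\bigl(\langle T\xi,\xi\rangle-\langle\xi,T\xi\rangle\bigr)$ to a boundary integral, and the commutator $[T,\varphi]$ with a cutoff is \emph{not} controlled by $\sigma_{\operatorname{pr}}(T)(d\varphi)$: for a bounded non--local perturbation $K$ one only has $\|[K,\varphi]\|\leq 2\|K\|\,\|\varphi\|_{\infty}$, not a bound by $\|d\varphi\|_{\infty}$. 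So the symbol estimate of Step 2, while true at the level of principal symbols, does not yield the energy identity you write in Step 3, nor does the smoothing--the--distance--function remedy in your ``Main obstacle'' paragraph rescue it. Indeed, if your argument were valid it would prove finite propagation speed of $D_{\epsilon,u}$ in \emph{all} directions (monotonicity on arbitrary metric balls), which is false because of $\Pi_{\epsilon}$; this is precisely why the proposition is only a \emph{cylindrical} propagation statement and why the later corollaries only measure $r$--distances between supports.

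What actually makes the statement true -- and what the paper's proof uses -- are two features your write--up never exploits. First, since $U=\partial L_0\times(a,b)$ contains whole cross--sections and $\alpha<a$, the region $B(U,\alpha-t)$ is not just contained in $Z_0$ (your Step 1) but is a full slab $\partial L_0\times I_t$; hence the relevant cutoffs are functions of $r$ alone and commute \emph{exactly} with $\Omega B(r)$, so no commutator or symbol estimate for $B(r)$ is needed (nor available, since for a general product--structure $T$ the norm of $\sigma_{\operatorname{pr}}(B(r))$ is not bounded by $1$). Second, in the derivative of $\|\xi(t)\|^2_{L^2(\partial L_0\times I_t)}$ the contribution of $\Omega B(r)$ vanishes identically because one integrates over the \emph{entire} cross--section and $B(r)$ is self--adjoint on $L^2(\partial L_0\times\{r\})$ (this is the point where Stone's theorem is invoked to know $\xi(t)_{|\partial L_0\times\{r\}}$ lies in the domain of $B(r)$); only the genuinely differential part $c(\partial_r)\partial_r$ produces boundary terms, on the two end cross--sections, and these are bounded by $\int|\xi|^2\,d\sigma$ since $\|c(\partial_r)\|=1$, so they are absorbed by the unit shrinking speed of the slab. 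The support statement should then be deduced from this slab monotonicity alone (applied to slabs $U'$ disjoint from $U$), not by time--reversal on small metric balls around a point, which again presupposes the local picture.
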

\begin{proof}
The product structure of the operator makes us possible to repeat the standard proof of the energy estimates and finite propagation speed. The proof consist in showing that
$
d/dt \|\xi(t)\|^2_{L^2(B(U,\alpha-t))}\leq 0$. Everything works because $T$ has product structure, the integration domain is a product and the operator $B(t)$ is selfadjoint on the base. Notice also that that $\xi(t)_{|\partial L_0 \times \{r\}}$ is in the domain of $B(r)$ by the theorem of Stone \cite{Reed} (however it is certainly true for operators in the form of our perturbation \eqref{asc}).
\end{proof}
\noindent As a notation for a subset $H\subset L$ and $t\geq0$ put $H\ast t:=B(H,t)\cup \partial L_0\times (\alpha-t,\beta+t)$ where 
$\alpha:=\inf\{r(z):z\in H\cap Z_0\}$ and 
$\beta:=\max\{r(z):z\in H\cap Z_0\}$ in other words $H\ast t$ is the set of points at distance $t$ from $H$ in the cylindrical direction.

\noindent It is clear from \eqref{59} the inclusion $$\operatorname{supp}(e^{itT}\xi)\subset \operatorname{supp}(\xi)\ast |t|.$$ Then the cylindrical basic Cheeger--Gromov--Taylor estimate similar to \eqref{base} is obtained noting that proposition \ref{59} is certainly true if the propagation speed is $c$, for a section $\xi$ supported into a ball $B(x,r_0)$ and $f\in \mathcal{S}(\R)$ let
$I_R:=(-(R-r_0)/c,(R-r_0)/c)$ if $R>r_0$ and $I_R=\emptyset$ if $r\leq R$ then, 
\begin{align*}
\|f(P)\xi\|_{L^2(L-B(x,r_0)\ast R)}&=\Big{\|}(2\pi)^{-1/2}\int_{\R}\hat{f}(s)e^{isP}\xi ds\Big{\|}_{L^2(L-B(x,r_0)\ast R)} 
 \\ &\leq (2\pi)^{-1/2}\|\xi\|_{L^2(L)}\|\hat{f}\|_{L^1(\R-I_{R})},
\end{align*} since $\operatorname{supp}{e^{isP}\xi}\cap (L-B\ast R)=\emptyset$ for $|t|<(R-r_0)/c$.
\begin{prop}Choose two points on the cylinder $z_1=(x_1,s_1)$ and $z_2=(x_2,s_2)$ with $s_i>r_1$, $|s_1-s_2|>2r_1$, put $I(z_1,z_2):=\Big{(}-\dfrac{|s_1-s_2|+r_1}{c},\dfrac{|s_1-s_2|-r_1}{c}\Big{)}$ then for $f\in \mathcal{S}(\R)$,
$$|\nabla_{z_1}^l\nabla_{z_2}^k[f(P)]_{(z_1.z_2)}|\leq \mathcal{C}(P,l,k)\sum_{j=0}^{2\bar{n}+l+k}\int_{R-I(z_1,z_2)}|\hat{f}(s)^{(j)}|ds$$ with $\bar{n}:=[n/2+1]$
\end{prop}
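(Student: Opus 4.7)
The plan is to adapt the Cheeger--Gromov--Taylor machinery of Proposition~\ref{est} to the cylindrical setting by replacing the standard finite propagation estimate with the cylindrical one (Proposition~\ref{59}) and with the basic $L^2$--estimate displayed immediately above the statement. The three ingredients are the same: finite propagation speed, G{\aa}rding regularity on a bounded geometry manifold, and Sobolev embedding.

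First I would reduce the pointwise bound on $\nabla_{z_1}^l\nabla_{z_2}^k[f(P)]_{(z_1,z_2)}$ to an $L^2$--bound on the kernel of an operator of the form $P^j f(P)$. Since $P$ is formally self--adjoint and commutes with $f(P)$, differentiating the Schwartz kernel $z_1\mapsto [f(P)](z_1,z_2)$ (resp.\ in $z_2$) is controlled, via G{\aa}rding's inequality applied leafwise in the small geodesic balls $B_i:=B(z_i,r_1/2)$, by inserting up to $\bar{n}+l$ (resp.\ $\bar{n}+k$) powers of $P$; the Sobolev embedding $H^{\bar n}\hookrightarrow C^0$ then converts the resulting $L^2$--bounds into pointwise values at $z_1$ and $z_2$. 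By bounded geometry the constants in G{\aa}rding and Sobolev embedding are uniform, and are what gets absorbed into $\mathcal{C}(P,l,k)$. This reduction shows that $|\nabla_{z_1}^l\nabla_{z_2}^k[f(P)]_{(z_1,z_2)}|$ is controlled by bilinear pairings $|\langle P^j f(P)\xi_1,\xi_2\rangle|$ for $j\le 2\bar n+l+k$ and for $\xi_i\in C^\infty_c(B_i;E)$ of unit $L^2$ norm.

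Second, I would rewrite each $P^j f(P)$ via integration by parts in the Fourier representation of $f(P)$. Starting from $f(P)=(2\pi)^{-1}\int\hat f(s)\,e^{isP}\,ds$ and using $Pe^{isP}=-i\partial_s e^{isP}$, one obtains
$$
P^j f(P)=\frac{i^j}{2\pi}\int_{\mathbb R}\hat f^{(j)}(s)\,e^{isP}\,ds,
$$
so the Fourier symbol that appears is precisely $\hat f^{(j)}$, which accounts for the sum $\sum_{j=0}^{2\bar n+l+k}\int|\hat f^{(j)}|$ in the statement.

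Third, I would apply the cylindrical basic estimate derived right before the proposition to each pairing. Cylindrical finite propagation speed (Proposition~\ref{59}) guarantees that $\operatorname{supp}(e^{isP}\xi_1)\subset B_1\ast|s|$, and by the definition of $H\ast t$ this set is disjoint from $B_2$ as long as $|s|$ is small enough relative to the $r$--separation $|s_1-s_2|$. Tracking the two constraints (the ordinary ball distance $|s|<(|s_1-s_2|-r_1)/c$ and the cylindrical $r$--range extension) produces exactly the interval $I(z_1,z_2)$. Hence
$$
|\langle P^j f(P)\xi_1,\xi_2\rangle|\le (2\pi)^{-1}\,\|\xi_1\|_{L^2}\,\|\xi_2\|_{L^2}\int_{\mathbb R\setminus I(z_1,z_2)}|\hat f^{(j)}(s)|\,ds,
$$
and summing over $0\le j\le 2\bar n+l+k$ after the Sobolev/G{\aa}rding reduction gives the desired pointwise estimate.

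The only genuine subtlety, and where I expect to spend a little care, is the bookkeeping for $I(z_1,z_2)$: the non--symmetric shape reflects the product structure of the cylinder entering the definition of $H\ast t$ (the $r$--range is widened by $t$ on each side, together with the ball enlargement of radius $r_1/2$ around $z_i$). All other steps are standard and proceed exactly as in the references~\cite{ChGrTa,Vai}; the bounded geometry of each leaf and the product structure of $P$ near the cylinder ensure that the constants in the argument do not depend on the chosen leaf.
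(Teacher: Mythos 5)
Your argument is correct and follows essentially the same route as the paper, which simply declares the proof identical to the corresponding proposition of Vaillant \cite{Vai}: the G{\aa}rding/Sobolev--embedding reduction to localized $L^2$ (bilinear) estimates, the integrated-by-parts Fourier formula $P^jf(P)=\tfrac{i^j}{2\pi}\int\hat{f}^{(j)}(s)e^{isP}\,ds$, and cylindrical finite propagation speed to excise the interval $I(z_1,z_2)$. The one point the paper does spell out---that $[f(P)]_{(x,\bullet)}$ lies in the domain of $P^j$, justified there via Dirac masses and the Sobolev mapping properties of $f(P)$---is handled in your formulation by pairing against compactly supported smooth test sections, so no gap remains (and your symmetric bookkeeping for $I(z_1,z_2)$ is the natural reading of the slightly asymmetric interval printed in the statement).
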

\begin{proof}The proof is identical to the proof of Proposition 3.9 in \cite{Vai}. There is only a subtle point we need to reckle, it is when one let $P^j$ act on $[f(P)]_{(x,\bullet)}$. This is perfectly granted by the smoothing properties of $f(P)$ in fact, let the bundle be $L\times \R$ and identify distributions with functions through the Riemannian density.
The operator $f(P)$ extends to an operator from compactly supported distributions to distributions (actually takes values on smooth functions).
 Consider the family of Dirac masses $\delta_y(\cdot)$ concentrated at $y$,
first note that $[f(P)]_{(x,y)}=(f(P)\delta_y(\cdot))(x)$ in fact this is, by  selfadjointness, equivalent to $$\langle f(P)\delta_y,s\rangle=\langle \delta_y,f(P)s\rangle=\int [f(P)]_{(z,y)}t(z)dz.$$ Now the Sobolev embedding theorem says that $\delta_y\in H^k(X)$ with $k<-n/2$ with norms uniformly bounded in $y$. Since $f(P)$ maps every Sobolev space into each other Sobolev space, every section $[f(P)]_{(x,\bullet)}$ (and the symmetric one by selfadjointness) is in the domain of $P^j$.

\end{proof}
\begin{cor}
With the notations of the proposition above
\begin{enumerate}
\item If $|s_1-s_2|>2r_1$, $s_i>r_1$
\begin{equation}\label{stil}|\nabla_{z_1}^l\nabla_{z_2}^k [P^me^{-tP^2}]_{(z_1,z_2)}|\leq \mathcal{C}(k,l,m,P)e^{-\dfrac{(|s_1-s_2|-r_1)^2}{6t}}\end{equation}
\item Let $\psi_1,\psi_2$ compactly supported with supports at $r$--distance $d$ on the cylinder, then for the operator norm \begin{equation}\label{1333}
\|\psi_1 P^me^{-tP^2}\psi_2\|\leq \mathcal{C}(m,\psi_1,\psi_2)e^{-d^2/6t}\,,t>0.
\end{equation}
\item The relative version of \eqref{stil} is 
\begin{equation}\label{asdl}
|\nabla_{z_1}^l\nabla_{z_2}^k [P^me^{-tP^2}-T^me^{-tT^2}]_{(z_1,z_2)}|\leq \mathcal{C}(k,l,m,P)e^{\{{-(\min\{s_1,s_2\}-r_2)^2/6t}\}}.\end{equation}
\end{enumerate}
\end{cor}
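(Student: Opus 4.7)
The plan is to deduce all three Gaussian heat kernel bounds from the preceding cylindrical Proposition (and a relative analog for part 3) by specializing to $f(x)=x^{m}e^{-tx^{2}}$ and carefully estimating the Fourier tail integrals that appear on the right-hand side. This mirrors the way \eqref{heats} was derived from Proposition \ref{est} in the standard case.

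First I would record the explicit form of the Fourier transform: writing $\hat{f}(s)=c_{m}\,t^{-(m+1)/2}H_{m}(s/\sqrt{t})e^{-s^{2}/4t}$ with a Hermite-type polynomial $H_{m}$, each derivative $\hat{f}^{(j)}$ has the same structure, namely $|\hat{f}^{(j)}(s)|\le C(m,j)\,P_{m,j}(s,t^{-1/2})\,e^{-s^{2}/4t}$ for a suitable polynomial $P_{m,j}$. The standard trick is then to trade part of the Gaussian decay for the polynomial factor, weakening $e^{-s^{2}/4t}$ to $e^{-s^{2}/6t}$ on the tail $|s|\ge a$; this yields $\int_{|s|\ge a}|\hat{f}^{(j)}(s)|\,ds\le C(m,j,P)\,e^{-a^{2}/6t}$, with the customary $t^{-m/2}$ prefactor appearing only for $t$ bounded away from $0$.

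For part 1, I apply the preceding cylindrical Proposition with points $z_{1},z_{2}$ on the cylinder as hypothesized. The interval $I(z_{1},z_{2})$ has length at least $2(|s_{1}-s_{2}|-r_{1})/c$, so the Fourier tail estimate above with $a=(|s_{1}-s_{2}|-r_{1})/c$ produces the bound \eqref{stil}, the factor $1/c^{2}$ being absorbed into $\mathcal{C}(k,l,m,P)$. For part 2, I use Schur's test: the kernel of $\psi_{1}P^{m}e^{-tP^{2}}\psi_{2}$ is $K(z_{1},z_{2})=\psi_{1}(z_{1})[P^{m}e^{-tP^{2}}]_{(z_{1},z_{2})}\psi_{2}(z_{2})$, each pair in its support satisfies $|s_{1}-s_{2}|\ge d$, and part 1 (with $l=k=0$ and $r_{1}$ chosen small) gives $|K(z_{1},z_{2})|\le C(m,P)e^{-d^{2}/6t}$ pointwise; the compactness of the $\operatorname{supp}\psi_{i}$ then bounds the Schur sup-norms by $C(m,\psi_{1},\psi_{2})e^{-d^{2}/6t}$, hence \eqref{1333}.

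For part 3 the one genuinely non-routine ingredient is the \emph{relative cylindrical} version of the preceding Proposition: when $P$ and $T$ are two product-structure operators agreeing on the cylinder (which in our application always holds since the cylindrical piece of the operator is fixed by the data), finite propagation speed on the cylinder, combined with the argument of Proposition \ref{asd} in the standard case, yields the same Fourier-representation bound for $[P^{m}e^{-tP^{2}}]-[T^{m}e^{-tT^{2}}]$ but with $I(z_{1},z_{2})$ replaced by an interval of length governed by the cylindrical distance of $\{z_{1},z_{2}\}$ to the boundary of the isometric region, i.e.\ by $\min\{s_{1},s_{2}\}-r_{2}$. Plugging this into the Hermite/Gaussian tail estimate of the first paragraph, with $a=(\min\{s_{1},s_{2}\}-r_{2})/c$, gives \eqref{asdl}. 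The main obstacle is precisely this relative cylindrical estimate—verifying that the proof of the preceding Proposition transports verbatim once one replaces the single wave group $e^{isP}$ by Duhamel's difference $e^{isP}-e^{isT}$ and exploits that $P=T$ on the cylindrical region to ensure the difference is supported away from the diagonal for $|s|$ small. Once this is in place, all remaining steps are bookkeeping with standard Gaussian tail bounds.
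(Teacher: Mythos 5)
Your proposal is correct and follows essentially the paper's own route: part 1 is the cylindrical analogue of \eqref{heats}, obtained by inserting $f(x)=x^{m}e^{-tx^{2}}$ and the Hermite/Gaussian tail bound into the preceding proposition, part 2 is noted in the paper to follow immediately from part 1 (your Schur-test argument over the compact supports is a standard way to make that explicit), and part 3 is proved, as the paper says, exactly as Proposition \ref{asd}, i.e.\ by the relative wave-group argument transported to the cylindrical finite-propagation-speed setting. The only point to watch is bookkeeping in part 2: points in the supports only satisfy $|s_{1}-s_{2}|\geq d$, so the pointwise bound gives $e^{-(d-r_{1})^{2}/6t}$, and to land exactly on $e^{-d^{2}/6t}$ you should keep some slack in the Gaussian exponent (the true decay is $e^{-s^{2}/4t}$) and take $r_{1}$ small relative to $d$ --- the same implicit adjustment the paper makes.
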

\begin{proof}
The second statement follows immediately from the first one while the third can be proven exactly as proposition \ref{asd}.
\end{proof}
\section{The eta invariant}\label{eta}
\subsection{The classical eta invariant} The eta invariant of Atiyah Patodi and Singer appears for the first time in the following theorem that we write in the cylindrical case.
\begin{thm}
\noindent Let $X$ a compact manifold with boundary $Y$ and product type metric on a collar $Y\times [0,1]$, attach an infinite cylinder $Y\times [-\infty,0]$ to get the elonged manifold $\hat{X}$:=X. Let $D:C^{\infty}(X;E)\longrightarrow C^{\infty}(X;F)$ a first order differential elliptic operator with product structure near the boundary i.e.
$D=\sigma(\partial_u+A)$ where $\sigma E_{|Y}\longrightarrow F_{|Y}E$ is a bundle isomorphism, $\partial_u$ is the normal interior coordinate and $A$ is the boundary self--adjoint elliptic operator. Then the operator $D$ extends to sections of the bundles extended to $\hat{X}$ and has a finite $L^2$ index given by the formula $$\operatorname{ind}(D)=\operatorname{dim}_{L^2(\hat{X},E)}(D)-\operatorname{dim}_{L^2(\hat{X},E)}(D^*)=\int_X\alpha_0(x)dx-\eta(0)/2-\dfrac{h_{\ty}(E)-h_{\ty}(F)}{2}.$$
\noindent Here the defect number $\eta(0)$, is called the \underline{spectral asymmetry} or the \underline{eta invariant} of $A$ and is obtained as follows:

\noindent the summation on the non negative eigenvalues of $A$, $\eta(s):=\sum_{\lambda\neq 0}\operatorname{sign}(\lambda)|\lambda|^{-s}$ converges absolutely for $\operatorname{Re}(s)>>0$ and extends to a meromorphic function on the whole $s$--plane with regular value at $s=0$. Moreover if the asymptotic expansion has no negative powers of $t$ then $\eta(s)$ is holomorphic for $\operatorname{Re}(s)>-1/2$. That's the case of Dirac type operators on Riemannian manifolds.
\end{thm}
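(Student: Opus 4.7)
The plan is to follow the classical heat-kernel strategy of Atiyah-Patodi-Singer. I organize it into three steps: the McKean-Singer setup and finite-dimensionality of the $L^2$-kernels, the short-time analysis that produces the local integrand plus the eta contribution, and the long-time analysis that identifies the $L^2$-index and the extended-solution defect $h_{\infty}$.

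First I would verify that $\ker_{L^2}(D)$ and $\ker_{L^2}(D^*)$ are finite dimensional and consist of smooth sections. On the cylinder $Y\times(-\infty,0]$ the product structure allows separation of variables: any $L^2$-solution of $Ds=0$ has an expansion $\sum_{\lambda>0}c_\lambda\phi_\lambda(y)e^{\lambda u}$ over the positive eigenvalues of $A$, so it is determined by a boundary datum lying in $\operatorname{Im}\chi_{(0,\infty)}(A)$; combined with interior elliptic regularity on the compact piece this yields finite dimension. With this in hand the McKean-Singer identity in its extended form
$$\operatorname{ind}(D)+\tfrac{1}{2}\bigl(h_{\infty}(E)-h_{\infty}(F)\bigr)=\lim_{t\to\infty}\bigl[\operatorname{Tr}(e^{-tD^*D})-\operatorname{Tr}(e^{-tDD^*})\bigr]$$
is set up, the $h_\infty$ correction recording the contribution of extended solutions at the bottom of the continuous spectrum.

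Second, I would extract the short-time behaviour via a parametrix for the heat kernel on $\hat{X}$, obtained by patching together an interior Euclidean-type parametrix and a model cylindrical parametrix. The interior part, by Getzler rescaling, yields the Atiyah-Singer integrand $\int_X\alpha_0(x)\,dx$ in the limit $t\to 0$. The cylindrical part, computed from the spectral resolution of $A$ together with the explicit one-dimensional heat kernel on the half-line, produces the boundary contribution; finite propagation speed for the wave equation of $D$ controls the patching error. A direct computation on the model cylinder identifies this boundary term, after Mellin transform, as $\tfrac{1}{2}\eta(0)$ through the representation
$$\eta(s)=\frac{1}{\Gamma\!\bigl(\tfrac{s+1}{2}\bigr)}\int_{0}^{\infty}t^{(s-1)/2}\operatorname{Tr}(Ae^{-tA^2})\,dt.$$

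The analytic properties of $\eta$ are then read off from this Mellin representation. Splitting the integral at $t=1$, the large-$t$ piece is entire since $0$ is isolated in the nonzero spectrum of $A^2$, while the small-$t$ piece has a meromorphic continuation coming from the standard asymptotic expansion $\operatorname{Tr}(Ae^{-tA^2})\sim\sum_{k\geq-\dim Y}a_k t^{k/2}$ on the closed manifold $Y$. The hardest step of the whole scheme, and the one requiring genuine geometric input, is the regularity of $\eta$ at $s=0$: this reduces to showing that the residue coefficient $a_{-1}$ vanishes. I would follow the original embedding argument of Atiyah-Patodi-Singer, reducing via an isometric embedding $Y\hookrightarrow S^N$ to a spectrally symmetric model where the vanishing is explicit. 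For Dirac-type $A$, the stronger holomorphy on $\operatorname{Re}(s)>-1/2$ follows from a parity argument on the local symbol of $Ae^{-tA^2}$, since only even powers of $t^{1/2}$ can appear in the expansion.
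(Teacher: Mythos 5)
First, note that the paper does not actually prove this statement: it is the classical Atiyah--Patodi--Singer theorem, recalled (in its cylindrical $L^2$ form) from \cite{AtPaSi1} as background in Sections 3 and 7, so there is no internal proof to compare with. The only proof machinery developed in the paper is for the foliated generalization, and it is instructive that it does \emph{not} proceed by a direct McKean--Singer identity on the elongated manifold: it perturbs the operator to a Breuer--Fredholm one, works with cut-off supertraces $\phi_k e^{-tD_{\epsilon,u}^2}\phi_k$, and extracts the eta term from the $t$-derivative boundary contribution via Cheeger--Gromov--Taylor comparison on the cylinder. Judged on its own terms, your outline is the standard heat-equation route, but the two steps where the substance of the theorem lies are asserted rather than proved.

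Concretely: (i) on $\hat{X}$ neither $e^{-tD^*D}$ nor $e^{-tDD^*}$ is trace class, so your ``extended McKean--Singer identity'' must be formulated for a relative or otherwise regularized supertrace, and its $t\to\infty$ limit is exactly the hard analytic content of the $L^2$ statement: identifying the defect $\tfrac12\bigl(h_{\infty}(E)-h_{\infty}(F)\bigr)$ requires resolving the continuous spectrum down to the threshold $0$, where the extended solutions sit as zero-energy resonances; you record this as a bookkeeping convention instead of proving it. (ii) The mechanism producing $\eta(0)$ is misplaced: on the exact cylinder the pointwise traces of the two model heat kernels coincide, so the cylindrical part of your parametrix contributes nothing to the small-$t$ limit of the relative supertrace, which is simply $\int_X\alpha_0$. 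The eta invariant enters either through the non-constancy in $t$ of the regularized supertrace, whose derivative is a boundary term proportional to $t^{-1/2}\operatorname{Tr}(Ae^{-tA^2})$ (this is the route the paper follows, leafwise, in its Lemma on $\beta_{01}$), or, as in the original APS argument, by first proving the index theorem for the spectral boundary-value problem on the compact piece via the explicit half-cylinder heat kernel with the projection $P$, and then passing to the $L^2$ and extended indices through the relation $\operatorname{ind}_{\mathrm{APS}}(D^+)=\operatorname{ind}_{L^2}(D^+)-h_{\infty}(D^-)$ together with $h=h_{\infty}(D^+)+h_{\infty}(D^-)$, as recalled in Section 3; your plan carries out neither computation, so neither $\eta(0)/2$ nor the $h_{\infty}$ terms are actually derived. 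Two smaller points: with $D=\sigma(\partial_u+A)$ and the cylinder at $u\le 0$, an $L^2$ solution expands over the \emph{negative} spectrum, $\sum_{\lambda<0}c_{\lambda}e^{|\lambda|u}\phi_{\lambda}$, so your ansatz $\sum_{\lambda>0}c_{\lambda}e^{\lambda u}\phi_{\lambda}$ does not solve the equation and puts the limiting data in the wrong spectral half; and for Dirac-type $A$ the holomorphy of $\eta(s)$ on $\operatorname{Re}(s)>-1/2$ is not a formal parity argument on the symbol but the Bismut--Freed cancellation $\operatorname{tr}\bigl(Ae^{-tA^2}\bigr)(y,y)=O(t^{1/2})$, which is precisely what the paper invokes in its foliated eta discussion.
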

\subsection{The foliation case}
\noindent The existence of the eta invariant for the leafwise Dirac operator on a closed foliated manifold was shown by Peric \cite{Peric} and Ramachandran \cite{Rama}. In fact they build different invariants, Peric works with the holonomy groupoid of the foliation and Ramachandran with the equivalence relation but the methods are essentially the same. So consider a compact manifold $Y$ with a foliation and a longitudinal Dirac structure i.e. every geometrical structure needed to form a longitudinal Dirac--type operator acting on the tangentially smooth sections of the bundle $S$, $D:C^{\ty}_{\tau}(Y;S)\longrightarrow (Y;S)$. In our index formula $Y$ will be a transverse section of the cylinder sufficiently far from the compact piece and $D$ is the operator at infinity. Suppose also that a transverse holonomy invariant measure $\Lambda$ is fixed.

\noindent The passage from the summation $\eta(s)=\sum_{\lambda}\operatorname{sign}(\lambda)|\lambda|^{-s}$ which deals with the discrete spectrum to a continuous spectrum and family version  is given by the definition of Euler gamma function 
$\operatorname{sign}(\lambda)|\lambda|^{-s}=\dfrac{1}{\Gamma(\frac{s+1}{2})}\int_0^{\ty}t^{\frac{s-1}{2}}\lambda e^{-t\lambda^2}dt.$ Each bounded Borel spectral function of $D$ belongs to the Von Neumann algebra of the foliation arising from the regular representation of the equivalence relation on the Borel field of $L^2(S)$. Replace the summation by integration w.r.t. the spectral measure of $D$  and (formally) change the integration to define the eta function of $D$ as
\begin{align}\label{etafun}
\eta_{\Lambda}(D;s):=\int_{-\infty}^{\infty}\operatorname{sign}(\lambda)|\lambda|^{-s}d\mu_{D}(\lambda)=\dfrac{1}{\Gamma(\frac{s+1}{2})}\int_0^{\ty}t^{\frac{s-1}{2}}\operatorname{tr}_{\Lambda}(De^{-tD^2})dt.
\end{align}
\noindent We shall use also the notation $$\eta_{\Lambda}(D;s)_k:=\int_k^{\ty}t^{\frac{s-1}{2}}\operatorname{tr}_{\Lambda}(De^{-tD^2})dt,\,\,\,\eta_{\Lambda}(D;s)^k:=\int_0^{k}t^{\frac{s-1}{2}}\operatorname{tr}_{\Lambda}(De^{-tD^2})dt$$

\begin{thm}(Ramachandran)
The eta function \eqref{etafun} is a well defined meromorphic function for $\operatorname{Re}(s)\leq 0$ with eventually simple poles at $(\operatorname{dim}{\mathcal{F}}-k)/2$, $k=0,1,2,....$. It is regular at $0$ and the value $\eta_{\Lambda}(D;0)$ is called the foliated eta invariant of $D$.
\end{thm}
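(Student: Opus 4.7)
The plan is to follow the classical Atiyah--Patodi--Singer template adapted to the $\Lambda$--trace: split the Mellin representation
\[
\eta_{\Lambda}(D;s) = \eta_{\Lambda}(D;s)^1 + \eta_{\Lambda}(D;s)_1
\]
at $t=1$ and treat the short--time and large--time halves separately. The short--time integral will furnish the meromorphic structure and the announced list of poles, while the large--time integral will be shown to be holomorphic on a suitable half--plane; the two will combine into the full meromorphic continuation. Two analytic inputs carry the argument: a uniform leafwise short--time heat kernel expansion for $\operatorname{tr}_{\mathbb{C}}[De^{-tD^2}](x,x)$, and a decay estimate for $\operatorname{tr}_{\Lambda}(De^{-tD^2})$ as $t\to\infty$.

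For the short--time half, each leaf $L_y$ of the compact foliated manifold $Y$ is a bounded--geometry Riemannian manifold on which $D_{|L_y}$ is a Dirac--type operator, so standard Seeley--DeWitt asymptotics (legitimized in the non--compact leafwise setting by the Cheeger--Gromov--Taylor bounds of Section~6) yield a pointwise expansion
\[
\operatorname{tr}_{\mathbb{C}}\,[De^{-tD^2}](x,x) \;\sim\; \sum_{k\geq 0} a_k(x)\, t^{\alpha_k},\qquad t\to 0^+,
\]
uniformly in $x\in Y$, with half--integer exponents $\alpha_k$ and coefficients $a_k(x)$ given by universal polynomials in the leafwise curvature and the Clifford connection. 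I would then promote this pointwise expansion to a spectral one by integrating plaquewise in a finite foliated atlas of $Y$ and pairing with the transverse measure $\Lambda$ via the ``integration of local traces'' recalled in Section~4; the uniformity in $x$ is precisely what legitimizes the interchange of asymptotic expansion with $\operatorname{tr}_{\Lambda}$, and one gets
\[
\operatorname{tr}_{\Lambda}(De^{-tD^2}) \;\sim\; \sum_{k\geq 0} A_k\, t^{\alpha_k},\qquad A_k := \langle a_k, C_\Lambda\rangle.
\]
Substituting the first $N$ terms into $\eta_{\Lambda}(D;s)^1$ and evaluating the elementary Mellin integrals produces a finite sum of simple poles at locations dictated by the $\alpha_k$; combined with the zeros of $1/\Gamma((s+1)/2)$ at the negative odd integers, what survives is exactly the announced list of simple poles at $(p-k)/2$, $k\geq 0$, while the remainder is holomorphic in an ever larger half--plane as $N$ grows.

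For the large--time half, I would use that longitudinal ellipticity together with compactness of $Y$ forces the $\Lambda$--spectral measure $\mu_{\Lambda,D}$ to be locally finite on $\R$ (the closed--manifold analogue of the finiteness property proved in Section~5, with the integration over a cylindrical slice replaced by integration over all of $Y$). Writing $\operatorname{tr}_{\Lambda}(De^{-tD^2}) = \int \lambda e^{-t\lambda^2}\,d\mu_{\Lambda,D}(\lambda)$ and splitting at $|\lambda|=M$, the high--frequency contribution decays like $e^{-M^{2}t/2}$, while the bounded--frequency piece is controlled by $(2et)^{-1/2}\mu_{\Lambda,D}([-M,M])$. The resulting bound $|\operatorname{tr}_{\Lambda}(De^{-tD^2})| = O(t^{-1/2})$ makes $\eta_{\Lambda}(D;s)_1$ absolutely convergent on a half--plane, and together with the short--time subtraction procedure this extends $\eta_{\Lambda}(D;s)$ to a meromorphic function on all of $\mathbb{C}$ with the claimed pole structure.

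The main obstacle I anticipate is the regularity at $s=0$, which is \emph{not} automatic from the pole counting above: a residue at zero can only come from the single term whose exponent $\alpha_k$ produces $s=0$, and its value is a nonzero multiple of $A_{k_\ast} = \langle a_{k_\ast}, C_\Lambda\rangle$. I would kill it by a \emph{pointwise} vanishing argument: for a Dirac--type operator the insertion of the extra factor $D$ contributes one unpaired Clifford element, and a leafwise Getzler--rescaling calculation on each leaf shows that the critical local coefficient $a_{k_\ast}(x)$ vanishes identically in $x$, independently of $\Lambda$. Regularity at $s=0$ then follows and $\eta_{\Lambda}(D;0)$ is unambiguously defined. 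The subtler technical point throughout is the exchange of ``asymptotic expansion'' with ``$\operatorname{tr}_{\Lambda}$''; this is exactly where the uniform boundedness of the Seeley--DeWitt coefficients over leaves of bounded geometry and the finiteness of the local trace measures intervene, providing the dominated--convergence control that makes the interchange rigorous.
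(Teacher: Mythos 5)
Your overall architecture is the same as the paper's: split the Mellin integral at $t=1$, extract the pole structure from the leafwise short--time expansion of $[De^{-tD^2}]$ integrated against $\Lambda$, control the tail with the $\Lambda$--spectral measure, and reduce regularity at $s=0$ to a pointwise vanishing of the critical local coefficient. The genuine gap is in your large--time half. The bound $|\operatorname{tr}_{\Lambda}(De^{-tD^2})|=O(t^{-1/2})$ that you get by splitting the spectral integral at $|\lambda|=M$ gives absolute convergence of $\eta_{\Lambda}(D;s)_1$ only for $\operatorname{Re}(s)<0$: at $s=0$ your integrand is merely $O(t^{-1})$, and nothing in your argument continues the tail across $\operatorname{Re}(s)=0$ (the short--time subtraction procedure only continues $\eta_{\Lambda}(D;s)^1$). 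Since the whole content of the theorem is the value at $s=0$, and since in the von Neumann setting there is no spectral gap at $0$ to produce exponential decay of the trace, this step fails as written. The paper's first step repairs exactly this point: by Tonelli, $\eta_{\Lambda}(D;s)_1=\int_{\mathbb{R}}\lambda\big(\int_1^{\infty}t^{(s-1)/2}e^{-t\lambda^2}dt\big)d\mu_{\Lambda,D}(\lambda)$, and the elementary bounds $|\lambda|\int_1^{\infty}t^{-1/2}e^{-t\lambda^2}dt\le\Gamma(1/2)$ for all $\lambda$, with Gaussian decay $Ce^{-\lambda^2/2}$ for $|\lambda|\ge 1$, combined with the fact that $\mu_{\Lambda,D}$ is \emph{tempered} (it comes from the positive functional $f\mapsto\operatorname{tr}_{\Lambda}(f(D))$ on $\mathcal{S}(\mathbb{R})$), give absolute convergence of the tail for every $\operatorname{Re}(s)\le 0$, including $s=0$. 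So your local finiteness of the spectral measure is the right ingredient, but you must integrate in $t$ first rather than bound the trace in $t$.

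A secondary caution concerns your treatment of regularity at $s=0$. The strategy of killing the residue by a pointwise vanishing of the critical coefficient, which then pairs to zero with $C_{\Lambda}$, is indeed the paper's; but the local input is not a routine leafwise Getzler exercise. The paper separates two cases: if $\dim\mathcal{F}$ is even, the residue at $s=0$ involves $\Psi_{\dim\mathcal{F}}$ and the coefficients $\Psi_m$ vanish for $m$ even, so nothing further is needed; if $\dim\mathcal{F}$ is odd, the required cancellation $\operatorname{tr}[De^{-tD^2}]_{(x,x)}=O(t^{1/2})$ is the Bismut--Freed theorem \cite{Bifree}, which is imported (using that the $\Lambda$--trace is locally approximated by the ordinary trace) rather than re--proved. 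You should cite that result for the critical coefficient instead of presenting it as a rescaling computation; otherwise your outline matches the paper's proof.
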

\begin{proof}
We give a sketch of the proof since we shall use it in our computations for the eta invariant of the perturbed Dirac operator.

\noindent {\bf{First step.}} Prove that for every $s\in \mathbb{C}$ with $\Re(s)\leq 0$ the integral $\int_1^{\infty}t^{\frac{s-1}{2}}\operatorname{tr}_{\Lambda}(De^{-tD^2})dt$ is convergent; This is  proven by simple estimates with the use of the spectral measure. In particular here the spectral measure $\mu_{\Lambda,D}$ is tempered i.e. there exists some positive $l$ such that $\int \dfrac{1}{(1+|x|^l)}d\mu_{\Lambda,D}<\ty$. In fact this measure corresponds to 
a positive functional  \cite{Rama} $I:\mathcal{S}(\R)\longrightarrow \R,\, I(f)=\operatorname{tr}_{\Lambda}(f(D)).$ 
The same is obviously true for the square $D^2=|D|^2.$

\noindent {\bf{Second step.}} The examination of the finite piece $\int_0^{1}t^{\frac{s-1}{2}}\operatorname{tr}_{\Lambda}(De^{-tD^2})dt$
is done using the expansion of the Schwartz kernel of the leafwise operator $De^{-tD^2}$. One can prove that there exists a family of tangentially smooth and locally computable functions $\{\Psi_m\}_{m\geq 0}$ \footnote{in the case of the holonomy groupoid the $\Psi_m$ are locally bounded i.e. bounded on every set in the form of $r^{-1}K$ for $K$ compact in $Y$} so that the kernel $K_{t}(x,y,n)$ ($n$ the transverse parameter) of the leafwise bounded operator $De^{-tD^2}$ has the asymptotic expansion
\begin{equation}\label{scwe}K_{t}(x,x,n)\sim \sum_{m\geq 0}t^{(m-\operatorname{dim}\mathcal{F}-1)/2}\Psi_m(x,n).
\end{equation} 
Moreover $\Psi_m=0$ for $m$ even. The proof is an adaptation of the classical situation \cite{Cos}. Now, thanks to the expansion \eqref{scwe}, since the operator $De^{-tD^2}$ is $\Lambda$ trace class and the trace is the integral of the Schwartz kernel against the transverse measure we get the corresponding expansion for the trace
\begin{equation}\label{23}\dfrac{1}{\Gamma(\frac{s+1}{2})}\int_0^1 t^{\frac{s-1}{2}}\operatorname
{tr}_{\Lambda}(De^{-tD^2})dt\sim \sum_{m\geq 0}\dfrac{2}{s+m-\operatorname{dim}{\mathcal{F}}}\int_Y \Psi_m d\lambda\end{equation}
where $\int \Psi_m d\lambda=\Lambda(\Psi_m dg)$ is the effect of the integration of the tangential measures 
$x\longmapsto {\Psi_m}_{|l_x}\times dg_{|l_x}$. From \eqref{23} we see that the eta function has a meromorphic continuation to the whole plane with (at most) simple poles at $(\operatorname{dim}{\mathcal{F}}-k)/2, \,\,k=0,1,2,....$

\noindent {\bf{Third step.}} Regularity at the origin. \noindent If $p=\operatorname{dim}{\mathcal F}$ is even we have said that the coefficients $\Psi_m$ of the development \eqref{scwe} are zero for $m$ even, then the eta function is regular at $0$. If $p$ is odd the regularity at zero follows from a very deep result of Bismut and Freed \cite{Bifree}. In fact they showed that the ordinary Dirac operator satisfies a remarkable cancellation property,
 $$\operatorname{tr}(De^{-tD^2})=O(t^{1/2}).$$ Since the $\Lambda$--trace can be, as pointed out by Connes \cite{Cos}, locally approximated by the regular trace their result applies to our setting to give
 $K_t(x,x,n)\sim \sum_{m\geq p+2}\underbrace{t^{(m-p-1)/2}\Psi(x,n)}_{\textrm{almost everywhere}},$ and the regularity at the origin follows immediately.
\end{proof}

\subsection{Eta invariant for perturbations of the Dirac operator}
\noindent Let us consider slightly more general operators

 {\bf{1.}} \underline{{\bf{$P=D+K$}}} where $K\in \operatorname{Op^{-\ty}}$ is leafwise uniformly smoothing obtained by functional calculus, $K=f(D)$ where $f$ is a bounded Borel function supported in  $(-a,a)$.

\noindent Start with the computation
\begin{align}\label{pa}
Qe^{-tQ^2}-De^{-tD^2}&=
 Ke^{-t(D+K)^2}-D\int_0^1 e^{-s(D+K)^2}(KD+DK+K^2)e^{(t-s)D^2}ds.
\end{align} The family \eqref{pa} converges to $0$ as $t\rightarrow 0$ in the Frechet topology of kernels in $\operatorname{Op}^{-\ty}$ with uniform transverse control. Indeed for kernels $K(x,y,n)$ ($n$ is the transverse parameter) one uses foliated charts to define seminorms that involve derivatives w.r.t. $x,y$. From \eqref{pa} one gets the development
\begin{equation}\label{asdfrw}\operatorname{tr}_{\Lambda}(Qe^{-tQ^2})\sim_{t\rightarrow 0}
\sum_{m=0}t^{\frac{m-\operatorname{dim}\mathcal{F}-1}{2}}\int_Y \Psi_jd\Lambda +\operatorname{tr}_{\Lambda}(K)+g(t)\end{equation} where $g\in C[0,\ty)$ with $g(0)=0$. Then an asymptotic development for $\eta_{\Lambda}(Q)(0)_1$ as \eqref{23} follows. 

{\bf{2.}} \underline{The smooth family $u\longmapsto Q_u:=D+K+u$}. 

\noindent The function $\operatorname{tr}_{\Lambda}(Q_ue^{-tQ_u^2})$ is smooth ( same identical proof as \cite{Vai}) then, since $Q_u^{'}=\operatorname{I},$ 

\noindent $
\partial_u \operatorname{tr}_{\Lambda}(Q_u e^{-tQ_u^2})
=(1+2t\partial_t)\operatorname{tr}_{\Lambda}(Q^{'}_u e^{-tQ_u^2}).
$  By integration 
\begin{align}\label{322}
\partial_u \eta_{\Lambda}(Q_u,s)_1=
 \int_0^1 \dfrac{t^{(s-1)/2}}{\Gamma{(\frac{s+1}{2})}}\operatorname{tr}_{\Lambda}(Q_u^{'}e^{-Q_u^2})-\dfrac{s}{\Gamma(\frac{s+1}{2})}\int_0^1 t^{(s-1)/2}\operatorname{tr}_{\Lambda}(Q_u^{'}e^{-tQ_u^2})dt.
\end{align}\noindent Now proceed as before using the asymptotic development of the heat kernel for $D+u$ \footnote{$(D+u)^2$ is a generalized Laplacian},
$\operatorname{tr}_{\Lambda}(Q_u^{'}e^{-tQ_u^2})=\operatorname{tr}_{\Lambda}(Q_u^{'}e^{-tQ_u^2})\sim \sum_{m\geq 0}a_m(D+u)t^{(m-\operatorname{dim}\mathcal{F})/2}+g(t)$ where $g\in C[0,\ty)$, $g(0)=0.$
\noindent We see that the integral in \eqref{322} admits a meromorphic expansion around zero in $\mathbb{C}$ with zero as a pole of almost first order. Then the derivative $\partial_u \eta_{\Lambda}(Q_u,s)_1$ is holomorphic around zero. The identity
$\partial_u \operatorname{Res}_{|s=0} \eta_{\Lambda}(Q_u,s)_1=
 \operatorname{Res}_{|s=0}\partial_u \eta_{\Lambda}(Q_u,s)_1=0$ says that $\operatorname{Res}_{|s=0} \eta_{\Lambda}(Q_u,s)_1$ is constant in $u$ then the function $\eta_{\Lambda}(Q_u,s)_1$ is holomorphic at zero since
 $\eta_{\Lambda}(Q_0,s)_1$ is holomorphic in $0$.

 {\bf{3.}} \underline{Families in the form $Q_u=D+u+\Pi D$ for a spectral projection $\Pi=\chi_{(-a,a)}(D)$}.
\begin{prop}\label{0348}The eta invariant for $Q_u$ exists and satisfies
 $$\eta_{\Lambda}(Q_u)=\operatorname{LIM}_{\delta \rightarrow 0}\int_{\delta}^{1}\dfrac{t^{-1/2}}{\gamma(1/2)}\operatorname{tr}_{\Lambda}(Q_ue^{-tQ_u^2})dt+\int_{1}^{\ty}\dfrac{t^{-1/2}}{\gamma(1/2)}\operatorname{tr}_{\Lambda}(Q_ue^{-tQ_u^2})dt$$ where $\operatorname{LIM}$ is the constant term in the asimptotic development in powers of $\delta$ as $t\rightarrow 0$. Moreover for every $u\in \R$ and $a>0$,
 
 \noindent a. $\eta_{\Lambda}(Q_u)-\eta_{\Lambda}(Q_0)=\operatorname{sign}(u)\operatorname{tr}_{\Lambda}(\Pi)$
 
 \noindent b. $\eta_{\Lambda}(Q_0)=1/2\eta_{\Lambda}(Q_u)+1/2\eta_{\Lambda}(Q_{-u})$
 
 \noindent c. $|\eta_{\Lambda}(D)-\eta_{\Lambda}(Q_0)|=|\eta_{\Lambda}(\Pi D)|\leq \mu_{\Lambda,D}((-a,a))$.
 \end{prop}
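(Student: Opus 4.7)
The plan is to reduce the entire proposition to spectral calculus on $D$. Since $\Pi = \chi_{(-a,a)}(D)$ commutes with $D$, the perturbation $Q_u$ is a bounded Borel function of $D$: in the spectral representation it acts as multiplication by a piecewise affine $f_u(\lambda)$ whose behaviour on $\{|\lambda|<a\}$ is determined by the $\Pi$-block and which equals $\lambda + u$ on $\{|\lambda|\geq a\}$. Observe that $K := \Pi D = \phi(D)$ with $\phi(\lambda) = \lambda\chi_{(-a,a)}(\lambda)$ a bounded Borel function of compact support; by the standard bounded-geometry argument alluded to in case \textbf{1}, such a spectral function of an elliptic leafwise operator is uniformly smoothing. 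Case \textbf{1} therefore applies to $D \pm K$, and case \textbf{2} handles the subsequent translation by $u$; together they produce the meromorphic continuation of $\eta_\Lambda(Q_u,s)$, the regularity at $s=0$, and the displayed $\operatorname{LIM}$-regularized expression as the constant term in the small-$t$ heat expansion of case \textbf{1}.

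With existence secured, identities (a)--(c) reduce to elementary bookkeeping via the parametrized spectral theorem, which gives $\operatorname{tr}_\Lambda(g(Q_u)) = \int g(f_u(\lambda))\,d\mu_{\Lambda,D}(\lambda)$ for bounded Borel $g$ and, after meromorphic continuation to $s=0$,
\[
\eta_\Lambda(Q_u) \;=\; \int_\R \operatorname{sign}(f_u(\lambda))\,d\mu_{\Lambda,D}(\lambda).
\]
For (a), in the range of $u$ relevant to the application (small enough that no new spectral flow occurs outside the range of $\Pi$), the only contribution to $\operatorname{sign}(f_u) - \operatorname{sign}(f_0)$ comes from the $\Pi$-block, where $f_0$ sits at the origin and $f_u$ flips to $\operatorname{sign}(u)$; integrating yields $\operatorname{sign}(u)\operatorname{tr}_\Lambda(\Pi)$. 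Part (b) is the elementary averaging $\operatorname{sign}(u)+\operatorname{sign}(-u)=0$ combined with (a). For (c), $\eta_\Lambda(D)-\eta_\Lambda(Q_0)$ picks up only the $\{|\lambda|<a\}$ contribution, which equals $\int_{(-a,a)}\operatorname{sign}(\lambda)\,d\mu_{\Lambda,D}=\eta_\Lambda(\Pi D)$ by the same spectral calculus applied to $\Pi D$; the pointwise bound $|\operatorname{sign}|\leq 1$ then yields $\mu_{\Lambda,D}((-a,a))$.

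The principal obstacle is ensuring compatibility between the $\operatorname{LIM}$-regularized heat-kernel definition of $\eta_\Lambda$ and the naive spectral-measure integral above. The finite piece $\int_0^1 t^{(s-1)/2}\operatorname{tr}_\Lambda(Q_u e^{-tQ_u^2})\,dt$ is only defined through meromorphic continuation in $s$, and one must verify that its constant term at $s=0$ coincides with $\int\operatorname{sign}(f_u)\,d\mu_{\Lambda,D}$. This is achieved by combining the small-$t$ asymptotic expansion from case \textbf{1} (which pins down the meromorphic structure and the residue at $s=0$) with the temperedness of $\mu_{\Lambda,D}$ from Ramachandran's theorem (which controls the large-$t$ tail), but demands careful residue bookkeeping, since the $\Pi$-block could in principle contribute a pole at $s=0$ that must be shown to cancel. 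Once this identification is in place, the remaining computations in (a)--(c) are immediate.
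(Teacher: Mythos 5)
Your skeleton is the paper's: existence of $\eta_{\Lambda}(Q_u)$ via the two perturbation cases (first $D+K$ with $K=f(D)$, $f$ bounded Borel supported in $(-a,a)$ — your remark that $\Pi D=\phi(D)$ with $\phi(\lambda)=\lambda\chi_{(-a,a)}(\lambda)$ fits that hypothesis exactly — then the translation by $u$), and then the identities (a)--(c) by a computation against the spectral measure $\mu_{\Lambda,D}$. The existence half is fine. The problem is the pivot identity you then use for (a)--(c), namely $\eta_{\Lambda}(Q_u)=\int_{\mathbb{R}}\operatorname{sign}(f_u(\lambda))\,d\mu_{\Lambda,D}(\lambda)$. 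This integral is not merely delicate, it is undefined: $\mu_{\Lambda,D}$ is an infinite measure (the $\Lambda$-trace of the identity is infinite, and $\mu_{\Lambda,D}([-R,R])\to\infty$ as $R\to\infty$), while $\operatorname{sign}(f_u)=\pm 1$ outside a bounded set, so the right-hand side is an $\infty-\infty$ expression with no value. Consequently the "careful residue bookkeeping" you invoke to reconcile the $\operatorname{LIM}$-regularized definition with this naive integral cannot be carried out — there is nothing to reconcile it with — and since all of (a)--(c) are derived from that formula, the proof of the identities is left open as written.

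The repair, which is exactly what the paper does, is to subtract before integrating and never assign a spectral-measure value to an individual eta invariant: write
\begin{equation*}
\eta_{\Lambda}(Q_u)-\eta_{\Lambda}(Q_0)=\frac{1}{\Gamma(1/2)}\int_0^{\infty}t^{-1/2}\int_{\mathbb{R}}\Bigl[f_u(x)e^{-tf_u(x)^2}-f_0(x)e^{-tf_0(x)^2}\Bigr]\,d\mu_{\Lambda,D}(x)\,dt ,
\end{equation*}
split the inner integral at $|x|=a$; on $\{|x|>a\}$ the two $t$-integrals each evaluate to a $\operatorname{sign}$ (change of variable $\sigma=t f_u(x)^2$) and cancel — note this silently uses $\operatorname{sign}(x+u)=\operatorname{sign}(x)$ there, i.e. the smallness of $u$ you already assumed, which is the honest reading of the statement — while on $\{|x|<a\}$ one gets an absolutely convergent integral equal to $\operatorname{sign}(u)\,\mu_{\Lambda,D}((-a,a))=\operatorname{sign}(u)\operatorname{tr}_{\Lambda}(\Pi)$, the finiteness of $\operatorname{tr}_{\Lambda}(\Pi)$ (compact foliated boundary) legitimizing Fubini. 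Since your own treatment of (a) already works only with the difference $\operatorname{sign}(f_u)-\operatorname{sign}(f_0)$, this is a reordering of your argument rather than a new idea; (b) follows by averaging, and (c) by the same subtraction applied to $D$ and $Q_0$, whose integrand difference is supported in $\{|x|<a\}$ and gives $\eta_{\Lambda}(\Pi D)=\int_{(-a,a)}\operatorname{sign}(x)\,d\mu_{\Lambda,D}(x)$, bounded by $\mu_{\Lambda,D}((-a,a))$. But as submitted, the derivation of (a)--(c) rests on an identity that is false as stated, so the argument has a genuine gap.
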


\begin{proof}The first statement can be proved as above.

\noindent 
a. using the spectral measure we have to compute the difference  
\begin{align}\nonumber \int_0^{\ty}t^{-1/2}\int_{\R}(x+u-\chi x)e^{-t(x+u-\chi x)^2} -(x-\chi x)e^{-t(x-\chi x)^2}d\mu_{\Lambda,D}(x)\dfrac{dt}{\Gamma(1/2)}\end{align} where $\chi=\chi_{(-a,a)}(x).$ Split the integral on $\R$ into two pieces, $|x|> a$ and $|x|\leq a$.

\noindent First case, $|x|> a.$ Changing the integration order the first integral is 

\noindent $\Gamma(1/2)^{-1}\int_{|x|>a}\int_0^{\ty}(x+u)t^{-1/2}e^{-t(x+u)^2}
dt d\mu_{\Lambda,D}(x)$ and performing the change of variable $\sigma:=t(x+u)^2$ in the second we see that the difference is zero.
 Second case $|x|<a$, the second integral is zero, the first is
\begin{align}&\nonumber
\int_0^{\ty}t^{-1/2}\int_{-a}^{a}ue^{-tu^2}d\mu_{\Lambda,D}(x)\dfrac{dt}{\Gamma(1/2)}=\underbrace{\noindent \int_0^{\ty}u|u|\sigma^{-1/2}e^{-\sigma}\dfrac{d\sigma}{|u|^2}}_{tu^2=\sigma}\dfrac{\operatorname{tr}_{\Lambda}(\Pi)}{\Gamma(1/2)}=\operatorname{sign}(u)\operatorname{tr}_{\Lambda}(\Pi).
\end{align}b. and c. easily follow from a.
\end{proof} 

\section{The index formula}\label{qww}This section is devoted to the proof of the index formula. Computations that are not different from that of \cite{Vai} are omitted for brevity.
First we introduce the \underline{supertrace} notation. Since the bundle $E=E^+\oplus E^-$ is $\mathbb{Z}_2$--graded, there is a canonical  Random operator $\tau$ obtained by passing to the $\Lambda$--class of the family of involutions $\tau_x:=\left(\begin{array}{cc}\operatorname{Id}_{L^2(L_x;E^+)} & 0 \\0 & -\operatorname{Id}_{L^2(L_x;E^-)}\end{array}\right).$
Now the \underline{$\Lambda$--{supertrace}} of $B\in \operatorname{End}_{\Lambda}(E)$ is by definition
 $\operatorname{str}_{\Lambda}(B):=\operatorname{tr}_{\Lambda}(\tau B).$

\bigskip

\noindent Now according to proposition \ref{312} for  $\eppu$ the perturbed operator $\deu$ is $\Lambda$--Breuer--Fredholm. 
Consider the heat operator $\hdeupx$ on the leaf $L_x$. This is a uniformly smoothing operator with a Schwartz kernel (remember that the metric trivializes densities and $[\bullet]$ means Schwartz kernel) in the space $C^\ty$--sections that are bounded together with each covariant derivative, $[\hdeupx]\in UC^{\infty}(L_x\times L_x;\operatorname{End}(E)).$ It is a well know fact the convergence for $t\longrightarrow \infty$ in the Frechet space of $UC^{\infty}$ sections to the kernel of the smoothing projection on the $L^2$--null space,
$$\lim_{t\rightarrow \infty}[\hdeupx]=[\chi_{\{0\}}(\deux)].$$ This is  a consequence of the continuity of the functional calculus from rapid decaying Borel functions into $C^\ty$ uniformly bounded sections, $RB(\R)\longrightarrow UC^{\infty}(\operatorname{End(E)})$ applied to the sequence of functions $e^{-t\lambda^{2}}\longrightarrow \chi_{\{0\}}$ in $RB(\R).$
 Choose cut--off functions $\phi_k\in C_c^{\infty}(X)$ such that ${\phi_k}_{|X_k}=1$, ${\phi_k}_{|Z_{k+1}}=0.$ 
 \noindent The measurable family of bounded operators $\{\pk \hdeupx \pk\}_{x\in X}$ gives an intertwining operator $\pk \hdeut \pk\in \operatorname{End}_{\mathcal{R}}(L^2(E))$ hence a random operator 
 
 \noindent $\pk \hdeut \pk\in \operatorname{End}_{\Lambda}(L^2(E)).$

\begin{lem}
The random operator $\pk \hdeut \pk\in \operatorname{End}_{\Lambda}(L^2(E))$ is $\Lambda$--trace class. The following formula (iterated limit) holds true 
\begin{equation}\label{limita}\indu(\deupp)=\stru(\chi_{\{0\}}(\deu))=
\lim_{k\rightarrow \infty}\lim_{t\rightarrow \infty}\stru(\pk \hdeut \pk).
\end{equation}
\end{lem}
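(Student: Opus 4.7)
The plan is to break the statement into three steps and execute them in order. For trace-class membership of $\phi_k e^{-tD_{\epsilon,u}^2}\phi_k$, I would argue that the leafwise heat kernel $[e^{-tD_{\epsilon,u,x}^2}]$ is uniformly smoothing (as recalled in estimate \eqref{heats}), so the two-sided cutoff has compact support inside $\operatorname{supp}\phi_k\times \operatorname{supp}\phi_k$ on each leaf and remains uniformly smoothing. Computing $\operatorname{tr}_\Lambda$ as the transverse integration of local plaque traces --- the recipe used in the proofs of Theorem 4.3 and of the finiteness property of Section 5 --- the local traces are uniformly bounded by $\|\phi_k\|_\infty^2 \sup|\operatorname{tr}([e^{-tD_{\epsilon,u,x}^2}](y,y))|$ times a finite plaque volume, and the transverse integration is performed on the compact image of $\operatorname{supp}\phi_k$, producing a finite answer.

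For the equality $\operatorname{ind}_\Lambda(D_{\epsilon,u}^+)=\operatorname{str}_\Lambda(\chi_{\{0\}}(D_{\epsilon,u}))$, Proposition 3.12 ensures that $D_{\epsilon,u}$ is $\Lambda$-Breuer--Fredholm for $0<|u|<\epsilon$, so $0$ is isolated in $\operatorname{spec}_{\Lambda,e}(D_{\epsilon,u})$ and the finite-rank projection $\chi_{\{0\}}(D_{\epsilon,u})$ decomposes along the chiral grading as $\chi_{\{0\}}(D_{\epsilon,u}^+)\oplus \chi_{\{0\}}(D_{\epsilon,u}^-)$. Since by self-adjointness of $D_{\epsilon,u}$ the $\Lambda$-cokernel of $D_{\epsilon,u}^+$ coincides with $\ker D_{\epsilon,u}^-$, taking the supertrace recovers $\operatorname{tr}_\Lambda \chi_{\{0\}}(D_{\epsilon,u}^+) - \operatorname{tr}_\Lambda \chi_{\{0\}}(D_{\epsilon,u}^-) = \operatorname{ind}_\Lambda(D_{\epsilon,u}^+)$.

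For the iterated limit, first fix $k$ and let $t\to\infty$. The spectral gap $\omega>0$ around $0$ (from Proposition 3.12) gives $\|e^{-tD_{\epsilon,u}^2}-\chi_{\{0\}}(D_{\epsilon,u})\|\leq e^{-t\omega^2}$. I would write $e^{-tD_{\epsilon,u}^2}-\chi_{\{0\}}(D_{\epsilon,u}) = e^{-D_{\epsilon,u}^2}\cdot e^{-(t-1)D_{\epsilon,u}^2}(1-\chi_{\{0\}}(D_{\epsilon,u}))$ for $t\geq 1$ and factor the compressed difference as $(\phi_k e^{-D_{\epsilon,u}^2})\cdot [e^{-(t-1)D_{\epsilon,u}^2}(1-\chi_{\{0\}}(D_{\epsilon,u}))\phi_k]$. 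The first factor is $\Lambda$-trace class by the first step, the second has operator norm at most $e^{-(t-1)\omega^2}\|\phi_k\|_\infty$, and the semifinite estimate $\|AB\|_{1,\Lambda}\leq \|A\|_{1,\Lambda}\|B\|$ gives trace-norm convergence to zero, hence convergence of supertraces to $\operatorname{str}_\Lambda(\phi_k\chi_{\{0\}}(D_{\epsilon,u})\phi_k)$. Then letting $k\to\infty$, the projections $\chi_{\{0\}}(D_{\epsilon,u}^\pm)$ are $\Lambda$-trace class by the previous step, and normality of $\operatorname{tr}_\Lambda$ applied to the decreasing family $(1-\phi_k^2)\chi_{\{0\}}(D_{\epsilon,u}^\pm)(1-\phi_k^2)$ --- equivalently, dominated convergence in the transverse integration of local traces --- yields $\operatorname{str}_\Lambda(\phi_k\chi_{\{0\}}(D_{\epsilon,u})\phi_k)\to \operatorname{str}_\Lambda(\chi_{\{0\}}(D_{\epsilon,u}))$.

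The main obstacle is the uniform control of the $t\to\infty$ limit inside the $\Lambda$-supertrace, since operator-norm convergence of $e^{-tD_{\epsilon,u}^2}$ to $\chi_{\{0\}}(D_{\epsilon,u})$ does not automatically yield trace-class convergence of the compressed operators. The factorization trick --- absorbing a single $e^{-D_{\epsilon,u}^2}$ factor into the compactly supported $\phi_k$ to manufacture a $\Lambda$-trace-class piece, while the residual factor decays in operator norm thanks to the spectral gap --- is what bridges this gap; the Breuer--Fredholm hypothesis coming from Proposition 3.12 is essential precisely because it guarantees that gap.
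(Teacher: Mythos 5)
Your trace-class step and your final $k\to\infty$ step (normality/dominated convergence applied to the $\Lambda$--finite projection $\chi_{\{0\}}(D_{\epsilon,u})$) are essentially the paper's argument, and the identification $\operatorname{ind}_{\Lambda}(D^+_{\epsilon,u})=\operatorname{str}_{\Lambda}(\chi_{\{0\}}(D_{\epsilon,u}))$ is fine. The genuine gap is in your $t\to\infty$ step: you invoke ``the spectral gap $\omega>0$ around $0$ (from Proposition \ref{312})'' to claim the \emph{operator-norm} bound $\|e^{-tD^2_{\epsilon,u}}-\chi_{\{0\}}(D_{\epsilon,u})\|\leq e^{-t\omega^2}$. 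Proposition \ref{312} only says that $D_{\epsilon,u}$ is $\Lambda$--Breuer--Fredholm, i.e. $0\notin\operatorname{spec}_{\Lambda,e}(D_{\epsilon,u})$, which by definition means $\mu_{\Lambda,D_{\epsilon,u}}(-\sigma,\sigma)<\infty$ for some $\sigma>0$: the spectral projection $\Pi_{\sigma}=\chi_{[-\sigma,\sigma]}(D_{\epsilon,u})$ has \emph{finite $\Lambda$--trace}, not that it reduces to the kernel projection. Leafwise the gap between $0$ and the rest of $\operatorname{spec}(D_{\epsilon,u,x})$ need not be uniform in $x$, and the norm in $\operatorname{End}_{\Lambda}(E)$ is an essential supremum over leaves, so nonzero spectrum can accumulate at $0$ in the von Neumann sense even though the $\Lambda$--essential spectrum has a gap. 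Hence the exponential norm decay you use is unjustified (and false in general), and with it the asserted trace-norm convergence collapses. A secondary, fixable imprecision: step 1 gives that the two-sided compression $\phi_k e^{-tD^2_{\epsilon,u}}\phi_k$ is $\Lambda$--trace class; it does not, as stated, give that the one-sided cutoff $\phi_k e^{-D^2_{\epsilon,u}}$ is trace class.

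Both defects are repaired by the device the paper itself uses later (in the estimate of $\beta_{02}$): insert $\Pi_{\sigma}$ and factor symmetrically through Hilbert--Schmidt pieces. For $t\geq1$ write $\phi_k\big(e^{-tD^2_{\epsilon,u}}-\chi_{\{0\}}(D_{\epsilon,u})\big)\phi_k=\big(\phi_k e^{-D^2_{\epsilon,u}/2}\big)\,M_t\,\big(e^{-D^2_{\epsilon,u}/2}\phi_k\big)$ with $M_t=\big(e^{-(t-1)D^2_{\epsilon,u}}-\chi_{\{0\}}(D_{\epsilon,u})\big)(1-\Pi_{\sigma})+\big(e^{-(t-1)D^2_{\epsilon,u}}-\chi_{\{0\}}(D_{\epsilon,u})\big)\Pi_{\sigma}$. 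The outer factors are $\Lambda$--Hilbert--Schmidt because $\operatorname{tr}_{\Lambda}\big(\phi_k e^{-D^2_{\epsilon,u}}\phi_k\big)<\infty$ by your step 1 and $\operatorname{tr}_{\Lambda}(A^*A)=\operatorname{tr}_{\Lambda}(AA^*)$; the first summand of $M_t$ has norm at most $e^{-(t-1)\sigma^2}$; the second is a positive operator with $\operatorname{tr}_{\Lambda}$ equal to $\int_{0<|\lambda|\leq\sigma}e^{-(t-1)\lambda^2}\,d\mu_{\Lambda,D_{\epsilon,u}}(\lambda)\rightarrow0$ by dominated convergence, the measure being finite there. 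This yields the trace-norm convergence you wanted, at the price of using finiteness of the spectral measure near $0$ rather than a nonexistent norm gap. Note that the paper's own proof of the lemma avoids operator factorizations entirely: it uses the leafwise convergence of the heat kernels to the kernel of the projection in the Fr\'echet topology of $UC^{\infty}$ sections together with dominated convergence in the transverse integration over the compact set $\operatorname{supp}\phi_k$, and then a second dominated convergence in $k$; your route, once corrected as above, is a legitimate alternative.
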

\begin{proof}For the first statement there's nothing to proof, it is essentially the closed foliated manifold case. The local traces define  tangential measures that are $C^{\infty}$ in the direction of the leaves  while Borel and uniformely bounded (by the uniform ellipticity of the operator) in the transverse direction and we are integrating against the transverse measure on a compact set. More precisely we are evaluating the mass of a compact set through the measure $\Lambda_{h}$ where $h$ is the longitudinal measure that on the leaf $L_x$ is given by
$A\longmapsto \int_{A}\operatorname{str}_{\operatorname{End}(E)}[e^{-tD^2_{\epsilon,u}}]_{\operatorname{diag}}dg_{|L_x},$  with $\operatorname{str}_{\operatorname{End}(E)}$ the pointwise supertrace defined on the space of sections of $\operatorname{End}(E)\rightarrow X$ by $(\operatorname{str}_{\operatorname{End}(E)}\gamma)(x):=\operatorname{tr}_{\operatorname{end}(E_x)}(\tau(x)\gamma(x)).$
\noindent The limit formula \eqref{limita} is nothing but the Lebesgue dominated convergence theorem applied two times, first
$\stru(\chi_{\{0\}}(D_{\epsilon,u}))=\lim_{k\rightarrow \infty}
\stru(\pk \chi_{\{0\}}(D_{\epsilon,u})\pk)$ but for fixed $k$ one finds $$\stru(\pk \chi_{\{0\}}(D_{\epsilon,u}))\pk)=\lim_{t\rightarrow \infty}\stru (\pk \hdeut \pk).$$
The possibility to apply the dominated convergence theorem is given again by the integration process in fact as written above every tangential measure has smooth density w.r.t to the Riemannian metric and convergence is within the Frechet topology of $C^{\infty}$ functions. 

\end{proof}\noindent Now, Duhamel formula $d/dt \stru(\pk \hdeut \pk)=-\stru(\pk D^2_{\epsilon,u}\hdeut \pk)$ integrated between $s$ and $\infty$ leads to the identity
$$\lim_{t\rightarrow \infty}\stru(\pk \hdeut \pk)=
\stru(\pk \hdeups \pk)-\int_s^{\infty}
\stru(\pk D^2_{\epsilon,u}\hdeut\pk)dt.$$ Note that the right--hand side is independent from $s>0$. Then 
\begin{equation}\label{indint}\indu(\deupp)=\lim_{k\rightarrow \infty}
\left[
\stru (\pk \hdeups \pk)-\int_s^{\infty}\stru(\pk D^{2}_{\epsilon,u}\hdeut\pk) dt \right].\end{equation}
Split the integral into 
$$\int_s^{\infty}\stru(\pk D^{2}_{\epsilon,u}\hdeut\pk) dt =
\int_s^{\sqrt{k}}\stru(\pk D^{2}_{\epsilon,u}
\hdeut\pk) dt +\int_{\sqrt{k}}^{\infty}
\stru(\pk D^{2}_{\epsilon,u}\hdeut\pk) dt$$
\noindent  and make the following definitions
$$\xymatrix{{\alpha}_0(k,s)=\stru(\pk \hdeups \pk), &  \beta_0(k,s)=\int_s^{\infty} 
\stru(\pk D^2_{\epsilon,u}\hdeut\pk)dt\\
\beta_{01}(k,s)=\int_s^{\sqrt k}\stru(\pk D^{2}_{\epsilon,u}
\hdeut\pk) dt,& 
\beta_{02}(k,s)=\int_{\sqrt{k}}^{\infty}\stru(\pk D^{2}_{\epsilon,u}
\hdeut\pk) dt
}$$

\noindent Then $\beta_0(k,s)=\beta_{01}(k,s)+\beta_{02}(k,s)$ and 
\begin{equation}
\label{indf}
\indu(\deupp)=\lim_{k\rightarrow \ty}[\alpha_0(k,s)-\beta_0(k,s)]=[\alpha_0(k,s)-\beta_{01}(k,s)-\beta_{02}(k,s)].\end{equation}
\noindent Let us start with $\beta_{01}$.
\begin{lem}\label{309}Let $\eta_{\Lambda}(\deuf)$ be the Ramachandran eta--invariant for the perturbed operator $\deuf$ on the foliation at the infinity. Then the following limit formula is true
$$\lim_{k\rightarrow \ty}\operatorname{LIM}_{s\rightarrow 0}\beta_{01}(k,s)=\lim_{k\rightarrow \infty}\operatorname{LIM}_{s\rightarrow 0}\int_s^{\sqrt k}\stru(\pk D^{2}_{\epsilon,u}e^{-t\deuq}\pk)ds,=1/2 \eta_{\Lambda}({\deuf})$$\noindent where as usual $\operatorname{LIM}_{s\rightarrow 0}g(s)$ is the constant term in the expansion of $g(s)$ in powers of $s$ near zero.
\end{lem}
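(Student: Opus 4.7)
The strategy is to follow the blueprint of Vaillant \cite{Vai} for Galois coverings, replacing the covering trace by the Von Neumann trace $\tru$ and the classical eta invariant by its Ramachandran analog. The key point is that on the cylinder $Z_1$ the perturbed operator $D_{\epsilon,u}$ has the product form $c(\partial_r)\partial_r + \Omega D^{\mathcal{F}_{\partial}}_{\epsilon,u}$, so its heat kernel factorizes; this will let us reduce the two-dimensional integral over $(r,t)$ to the one-dimensional integral defining $\eta_\Lambda(D^{\mathcal{F}_{\partial}}_{\epsilon,u})$.

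First, I would apply the relative Cheeger--Gromov--Taylor estimate \eqref{asdl} to compare $D^{2}_{\epsilon,u}e^{-tD^{2}_{\epsilon,u}}$ on $X$ with the analogous operator $S^{2}_{\epsilon,u}e^{-tS^{2}_{\epsilon,u}}$, where $S_{\epsilon,u}$ is the natural product extension of $D_{\epsilon,u}|_{Z_1}$ to the complete cylinder $\mathbb{R}\times\partial X_{0}$. The two operators agree on $Z_{1}$, so the difference of their Schwartz kernels decays like $\exp(-(r-r_2)^{2}/6t)$ in $r$. The upper limit $t=\sqrt{k}$ is chosen precisely so that at the edge of the cutoff $\phi_{k}$ (where $r\sim k$) the Gaussian factor $\exp(-k^{2}/t)$ evaluates to at most $\exp(-k^{3/2})$, easily beating the polynomial growth of the $r$-volume in $k$. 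Consequently the error from this substitution vanishes in the iterated limit $\operatorname{LIM}_{s\downarrow 0}$, $k\uparrow\infty$, and the problem reduces to the analogous integral for the cylindrical model $S_{\epsilon,u}$.

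Second, on the cylinder one has $S^{2}_{\epsilon,u}=-\partial_{r}^{2}+B^{2}$ with $B:=D^{\mathcal{F}_{\partial}}_{\epsilon,u}$, and the heat kernel factorizes explicitly as $e^{-tS^{2}_{\epsilon,u}}((y,r),(y',r'))=(4\pi t)^{-1/2}\exp(-(r-r')^{2}/4t)\,e^{-tB^{2}}(y,y')$. The pointwise Clifford supertrace on the diagonal is identically zero (since $D^{+}_{\epsilon,u}D^{-}_{\epsilon,u}$ and $D^{-}_{\epsilon,u}D^{+}_{\epsilon,u}$ coincide on the cylinder), so the bulk does not contribute directly. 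However, super-cyclicity of $\stru$ combined with the commutator identity $[S_{\epsilon,u},\phi_{k}]=\phi_{k}'(r)c(\partial_{r})$ rewrites $\stru(\phi_{k}S^{2}_{\epsilon,u}e^{-tS^{2}_{\epsilon,u}}\phi_{k})$ in terms of a pointwise density of the form $\operatorname{str}_{\operatorname{End}(E)}(c(\partial_{r})S_{\epsilon,u}e^{-tS^{2}_{\epsilon,u}})$, whose value on the diagonal is, by direct calculation, $-2(4\pi t)^{-1/2}\operatorname{tr}_{F}(Be^{-tB^{2}})(y,y)$. This is the Ramachandran eta integrand at time $t$, multiplied by a Gaussian profile in $r$; integrating the Gaussian against $\phi_{k}\phi_{k}'(r)$ over $r\in[0,\infty)$ and using $\phi_{k}(0)=1$, $\phi_{k}(\infty)=0$ produces precisely a factor of $1/2$.

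Assembling the pieces, integrating against the transverse measure and passing to the $t$-integral, one obtains
$$\operatorname{LIM}_{s\downarrow 0}\int_{s}^{\sqrt{k}}\frac{1}{2\sqrt{\pi t}}\tru(Be^{-tB^{2}})\,dt+o_{k\to\infty}(1),$$
which tends to $\tfrac{1}{2}\eta_\Lambda(D^{\mathcal{F}_{\partial}}_{\epsilon,u})$ by the defining formula \eqref{etafun}. The principal obstacle is justifying the interchange of $\operatorname{LIM}_{s\downarrow 0}$, the $t$-integral, and the Von Neumann trace $\tru$; this requires the uniform transverse control on the short-time expansion \eqref{scwe} of Ramachandran, combined with the regularity of the perturbed eta function at $s=0$ established in Proposition \ref{0348}. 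All these technical issues are parallel to the Galois setting treated by Vaillant, so the proof ultimately follows that template applied to our foliated Von Neumann framework.
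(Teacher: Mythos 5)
There is a genuine gap, and it lies in the order of your two steps. You propose to replace $\stru(\pk D^{2}_{\epsilon,u}e^{-t\deuq}\pk)$ by the cylindrical model quantity $\stru(\pk S^{2}_{\epsilon,u}e^{-tS^{2}_{\epsilon,u}}\pk)$ \emph{first}, justified by the relative estimate \eqref{asdl}, and only afterwards localize via the supercommutator identity. But the supertrace of $\pk D^{2}_{\epsilon,u}e^{-t\deuq}\pk$ is an integral over the whole support of $\phi_k$, which contains $X_0$ and the collar $Z_0^1$: there $S_{\epsilon,u}$ lives on a different manifold (the bi-infinite cylinder) and the operators do not agree, so \eqref{asdl} says nothing about the difference of the diagonal densities. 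Even on the common region $Z_1$, the pointwise bound $Ce^{-(r-r_2)^2/6t}$ integrated over $r\in[1,k]$ and $t\in[s,\sqrt k]$ is of order $k^{3/4}$, not $o(1)$; the Gaussian factor only "beats the volume" once the integrand is already confined to $r\in[k,k+1]$. Worse, your own observation in the second step shows the substitution cannot be harmless: on the product cylinder $D^{+}_{\epsilon,u}D^{-}_{\epsilon,u}=D^{-}_{\epsilon,u}D^{+}_{\epsilon,u}$, so the diagonal supertrace density of $S^{2}_{\epsilon,u}e^{-tS^{2}_{\epsilon,u}}$ vanishes identically; hence the substituted quantity (with an honest compactly supported cutoff on the model cylinder) is essentially zero, and extracting a nonzero answer from it by the commutator identity on the half-cylinder silently drops a boundary term at $r=0$. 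In other words, the difference you discard in step one is not an error term — in the iterated limit it is exactly the $\tfrac12\eta_{\Lambda}(\deuf)$ you are trying to compute.

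The repair is to reverse the order, which is what the paper does (following Vaillant). First apply the supertrace identity to $D_{\epsilon,u}$ on $X$ itself: $\stru(\pk \deuss \pk)=-\tfrac12\stru\big(c(\partial_r)\partial_r(\pkd)\,\deus\big)$, which localizes the integrand to $Z_k^{k+1}$ where $\partial_r\pkd\neq 0$ (the interior contribution of $X_0$ is absorbed by this identity, not by a kernel comparison). Only then invoke \eqref{asdl} to replace $\deus$ by $\esm$ on that collar: there $r\geq k$ and $t\leq\sqrt k$, so the error is $O(e^{-ck^{3/2}}+e^{-c'/s})$ and genuinely disappears under $\lim_{k}\operatorname{LIM}_{s\to 0}$. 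After that, your explicit computation of the cylindrical kernel $[\esm]_{(z,z)}=(4\pi t)^{-1/2}\,\Omega\,[\deuf e^{-t(\deuf)^2}]$, the identity $\operatorname{str}^E(c(\partial_r)\Omega\,\cdot)=-2\operatorname{tr}^F(\cdot)$, the factor $\int_0^\infty \phi_k\phi_k'\,dr=-\tfrac12$, and the appeal to Proposition \ref{0348} for the behaviour at $s=0$ are all correct and coincide with the paper's final steps.
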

\begin{proof}
The integrand is
 $
\stru(\pk \deuss \pk)=-1/2\stru(c(\partial_r)\partial_r(\pkd)\deus).
$
\noindent In the next we shall use the notation $[a,b]:=ab-(-1)^{|a|\cdot |b|}ba$ for the Lie--superbracket\footnote{everything we say about super--algebras can be found in \cite{BeGeVe}} on the Lie--superalgebra of $\mathbb{C}$--linear endomorphisms of 
$L^2(X,E^+\oplus E^-)$ while, when the standard bracket is needed we write $[a,b]_{\circ}:=ab-ba.$ Notice that
$[\alpha,ab]=[\alpha,a]b+(-1)^{|\alpha|\cdot|a|}a[\alpha,b].$
\noindent Remember the definition of 
$\deu$, in the cylinder. It can be written $$\deu=D+\dot{\theta}\Omega(u-\deuf)=c(\partial_r)\partial_r+Q$$ with the Clifford multiplication $c(\partial_r)=\left(\begin{array}{cc}0 & -1 \\1 & 0\end{array}\right)$ and $Q$ is $\R^+$--invariant in fact it acts on the transverse section.
 The next identities are also useful
$$\deu=\left(\begin{array}{cc}0 & \deum \\\deupp & 0\end{array}\right),\quad \hdeut=\left(\begin{array}{cc}\eum & 0 \\0 & \eup\end{array}\right),$$
$$\deum \eup=\eum \deum,\quad \deupp \eum=\eup \deupp.$$
These are nothing but a rephrasing of the identity $\deus=\hdeut \deu$ granted by the spectral theorem. 
\noindent Now it's time to use the Cheeger--Gromov--Taylor relative estimates. Consider the leafwise operator 
\begin{equation}\label{sconfronta}S_{\epsilon,u}:=c(\partial_r)\partial_r+\Omega(u-\deuf)\end{equation} on the infinite foliated cylinder (in both directions) $Y=\partial X_0\times \R$ with the product foliation $\mathcal{F}_{\partial}\times \R$.
\noindent Choose some point $z_0=(x_0,r)$ on the cylinder. Estimate \eqref{asdl} says that we can compare the two kernels at the diagonal leaf by leaf for large $r$ and this estimate is uniform on the leaves,
\begin{equation}\label{cgt} \|[\deussxo]-[\dessxo]\|_{(z,z)}\leq Ce^{-(r-r_2)^2/(6t)}\end{equation}
for $\underline{z=(x,r)\in L_{z_0}}$. From \eqref{cgt}, since the derivatives of $\pk$ are supported on the cylindrical portion $Z_k^{k+1}=\partial X_0\times [k,k+1]$,
\begin{align}\label{opla}
\sk |\stru&(\clib  \deus)-\stru(\clib \esm)|dt=
\int_s^{\sqrt{k}}\int_{Z_k^{k+1}}\Theta(z,t)d\Lambda_g dt
\end{align} 
where $\Lambda_g$ is the coupling of $\Lambda$ with the tangential  Riemannian measure and $\Theta(z,r)$ is the function
$\Theta(z,r):=\|c(\partial_r)\partial_r \phi^2_k[D_{\epsilon,u,z}e^{-tD^2_{\epsilon,u,z}}-S_{\epsilon,u,z}e^{-tS^2_{\epsilon,u,z}}]\|_{(z,z)}.$
Let $\mathcal{T}_k$ be a transversal of the foliation $\mathcal{F}_k$ induced on the slice $\{r=k\}$ then $\mathcal{T}_k$ is also a transversal for $\mathcal{F}$  (the boundary foliation has the same codimension of $\mathcal{F}$). The transverse measure $\Lambda$ defines also a transverse measure on the boundary foliation. Then the foliation $\mathcal{F}_{|Z_k^{k+1}}$ is fibering on $\mathcal{T}_k$ as in the diagram  $\partial{\mathcal{F}}\times[k,k+1] \longrightarrow \mathcal{T}_k$. Use this fibration to disintegrate the measure $\Lambda_{g}$. This is splitted into $d\Lambda_{\partial}\times dr$ where $\Lambda_{\partial}$ is the measure obtained applying the integration process of $\Lambda$ (restricted to $\mathcal{F}_k$ ) to the $g_{|\partial}$. In local coordinates $(r,x_1,...,x_{2p-1})\times (x_{2p},...,x_n)$ the transversal is decomposed into pieces $\mathcal{T}_k=\{(k,x_1^0,...,x_{2p-1}^0)\}\times \{(x_{2p},...,x_n)\}$ and we are taking integrals 
\begin{align}\label{itegralesplitt}\int_{\mathcal{T}_k\times\{x_1,...,x_{2p-1}\}}\int_{[k,k+1]} \Theta(r,x_1,...,x_{2p-1},x_{2p},...,x_n)dr \underbrace{dx_1\cdot\cdot \cdot dx_{2p-1}  d\Lambda(x_{2p},..,x_n)}_{\textrm{this is }d\Lambda_{\partial}}\\
\nonumber =:\int_{\mathcal{F}_k}\int_{[k,k+1]}\Theta(x,r)d\Lambda_{\partial}dr. \end{align}
\noindent Equation \eqref{itegralesplitt} can be taken as a definition of a notation that will be used next. Notice that $\int_{\mathcal{F}_k}$ contains a slight abuse of notation, in fact to follow rigorously the integration recipe 
one should write $\int_{\partial X_0\times \{k\}}$. We prefer the first to stress the fact that we are splitting 
w.r.t the foliation induced on the transversal.
With this notation in mind \cite{Vai} the right hand side of \eqref{opla} is less than 

\begin{align*}
\sk \int_{\mathcal{F}_z}\int_{[k,k+1]}\|\clib [\deus-\esm]\|_{((x,r),(x,r))}dr &d\Lambda_{\partial} dt
\\
&\leq C(e^{-k^{3/2}/c_1}+e^{-c_2/s})
\end{align*}
for sufficiently small\footnote{$\,\,y^s e^{-ay^2} \leq (\dfrac{s}{2ae})^{s/2}$ for $s,u,y,a>0$} $s$  and large $k$. This estimate says that 
 $$\lim_{k\rightarrow +\ty}\operatorname{LIM}_{s\rightarrow 0}\beta_{01}(k,s)=\lim_{k\rightarrow +\ty}\operatorname{LIM}_{s\rightarrow 0}\int_s^{\sqrt k}\stru(\clib \esm)dt.$$

\noindent Now the second integral (on the cylinder) is explicitly computable
  in fact the Schwartz kernel of the operator $\dessxo$ on the diagonal is easily checked to be

\noindent $$
\big[\dessxo\big]_{(z,z)}=
\dfrac{1}{\sqrt{4\pi t}}\Omega\big[\deufo e^{-t\deufo}\big]_{(x,x)},\,\,\,z=(x,r).
$$ In particular it does not depend on the cylindrical coordinate $r$.
\noindent Now the pointwise supertrace on $\operatorname{End}(E)$ is related to the trace on the positive boundary eigenbundle $F$ via the identity (the proof in \cite{BeGeVe})
$\operatorname{str}^E(c(\partial_r)\Omega\bullet)=-2\operatorname{tr}^F(\bullet),$ then
\begin{align*}
\int_{s}^{\sqrt{k}}\stru (c(\partial_r)\partial_r \pkd& S_{\epsilon,u}e^{-tS^2_{\epsilon,u}})dt=
\int_s^{\sqrt{k}}\int_{\mathcal{F}_0}\dfrac{1}{\sqrt{\pi t}}\operatorname{tr}^F[D^{\mathcal{F}_{\partial}}_{\epsilon,u,x}e^{-t (D^{\mathcal{F}_{\partial}}_{\epsilon,u,x} )^2   }]_{(x,x)}\cdot d\Lambda_{\partial} dt, \end{align*}
with the same argument on the splitting of measures as above.
\noindent Finally it is clear from our discussion on the $\eta$--invariant 
 (exactly proposition \ref{0348}) that

\noindent $
\lim_{k\rightarrow \ty}
\operatorname{LIM}_{s\rightarrow 0}\beta_{01}(k,s)= 
\lim_{k\rightarrow \ty}\operatorname{LIM}_{s\rightarrow 0}
\int_s^{\sqrt{k}}\int_{\mathcal{F}_0}\dfrac{1}{\sqrt{\pi t}}\operatorname{tr}^F[D^{\mathcal{F}_{\partial}}_{\epsilon,u,x}e^{-t (D^{\mathcal{F}_{\partial}}_{\epsilon,u,x} )^2   }]_{(x,x)}\cdot d\Lambda_{\partial} dt
\\=1/2 \eta_{\Lambda}(D^{\mathcal{F}_{\partial}}_{\epsilon,u}).
$
\end{proof}
\begin{lem}Since $\deu$ is $\Lambda$--Breuer--Fredholm for $\eppu$ then
$$\lim_{k\rightarrow \ty}\beta_{02}(k,s)=\lim_{k\rightarrow \ty}\int^{\ty}_{\sqrt{k}}\stru(\pk D^{2}_{\epsilon,u}\hdeut \pk)dt=0.$$
\end{lem}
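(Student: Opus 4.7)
The plan is to use the Duhamel-type identity
\[
\int_{\sqrt k}^{\infty} D^2_{\epsilon,u}\, e^{-tD^2_{\epsilon,u}}\,dt \;=\; e^{-\sqrt k\, D^2_{\epsilon,u}} - \chi_{\{0\}}(D^2_{\epsilon,u}),
\]
which follows at once from the parametrized measurable spectral theorem, together with the fact that $\phi_k$ does not depend on $t$. Bringing the cutoffs through the time integral gives
\[
\beta_{02}(k,s) \;=\; \operatorname{str}_{\Lambda}\bigl(\phi_k\,[e^{-\sqrt k\, D^2_{\epsilon,u}} - \chi_{\{0\}}(D^2_{\epsilon,u})]\,\phi_k\bigr).
\]
The key observation is that the sandwiched operator is positive, because $e^{-\sqrt k\, D^2_{\epsilon,u}}-\chi_{\{0\}}(D^2_{\epsilon,u}) = e^{-\sqrt k\, D^2_{\epsilon,u}}\,(1-\chi_{\{0\}}(D^2_{\epsilon,u}))\geq 0$. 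For a positive random operator $B$, the estimate $|\operatorname{str}_{\Lambda}(B)|\leq \operatorname{tr}_{\Lambda}(B)$ holds (since $|\tau|\leq 1$), so by cyclicity of the $\Lambda$-trace
\[
|\beta_{02}(k,s)| \;\leq\; \operatorname{tr}_{\Lambda}\bigl(\phi_k^{2}\, e^{-\sqrt k\, D^2_{\epsilon,u}}(1-\chi_{\{0\}}(D^2_{\epsilon,u}))\bigr).
\]

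Next, I would apply the spectral theorem to $D^2_{\epsilon,u}$ and introduce the positive Borel measure on $[0,\infty)$ defined by $\nu_k(U):=\operatorname{tr}_{\Lambda}(\phi_k^{2}\,\chi_U(D^2_{\epsilon,u}))$. Since $0\leq\phi_k^{2}\leq 1$, one has $\nu_k\leq \mu_{\Lambda,D^2_{\epsilon,u}}$, and the previous estimate becomes
\[
|\beta_{02}(k,s)| \;\leq\; \int_{(0,\infty)} e^{-\sqrt k\,\lambda}\, d\nu_k(\lambda).
\]
Proposition \ref{312} tells us that $D_{\epsilon,u}$ is $\Lambda$-Breuer--Fredholm, which is equivalent to $0\notin\operatorname{spec}_{\Lambda,e}(D^2_{\epsilon,u})$; hence there exists $\omega>0$ with $\mu_{\Lambda,D^2_{\epsilon,u}}(0,\omega^2)<\infty$. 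I would split the last integral at $\omega^2$:
\[
\int_{(0,\omega^2)} e^{-\sqrt k\,\lambda}\, d\mu_{\Lambda,D^2_{\epsilon,u}}(\lambda) \;+\; e^{-\sqrt k\,\omega^{2}}\,\operatorname{tr}_{\Lambda}(\phi_k^{2}).
\]
The first term tends to $0$ by dominated convergence, since $e^{-\sqrt k\,\lambda}\to 0$ pointwise on $(0,\omega^2)$, is dominated by $1$, and the measure is finite on that interval. For the second, note that $\operatorname{supp}(\phi_k)\subset X_{k+1}=X_0\cup(\partial X_0\times[0,k+1])$, so the Ruelle--Sullivan trace $\operatorname{tr}_{\Lambda}(\phi_k^{2})$ is bounded by $C_{1}+C_{2}k$ thanks to the product cylindrical structure; the factor $e^{-\sqrt k\,\omega^{2}}$ easily beats this polynomial growth. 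Combining, $|\beta_{02}(k,s)|\to 0$ as $k\to\infty$.

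The main obstacle is that one cannot invoke a genuine operator-norm spectral gap: $\mu_{\Lambda,D^2_{\epsilon,u}}$ may well accumulate at $0^{+}$, so $e^{-\sqrt k\, D^2_{\epsilon,u}}(1-\chi_{\{0\}}(D^2_{\epsilon,u}))$ need not converge to zero in norm. The substitute is the finiteness of $\mu_{\Lambda,D^2_{\epsilon,u}}$ on $(0,\omega^2)$ granted by Breuer--Fredholmness, which is exactly enough to run dominated convergence near zero while the exponential decay handles the spectrum bounded away from zero despite the (at worst linear) growth of the $\Lambda$-volume of $\operatorname{supp}(\phi_k)$.
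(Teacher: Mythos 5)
Your reduction of $\beta_{02}(k,s)$ to $\operatorname{str}_{\Lambda}\bigl(\phi_k[e^{-\sqrt k D^2_{\epsilon,u}}-\chi_{\{0\}}(D^2_{\epsilon,u})]\phi_k\bigr)$ and your treatment of the low part of the spectrum (finiteness of the spectral measure near $0$ coming from Breuer--Fredholmness, then dominated convergence using normality) are sound, and the latter is essentially the paper's handling of its term $\beta_{022}$. The gap is in the high-spectrum part: $\operatorname{tr}_{\Lambda}(\phi_k^{2})$ is not finite, let alone of size $C_1+C_2k$. The $\Lambda$-trace of the multiplication operator $\phi_k^{2}$ is the integral against $\Lambda$ of its leafwise local traces, and the local trace of multiplication by a nonnegative function that is not a.e.\ zero on $L^2(L_x;E)$ is infinite (on the set where $\phi_k^{2}\geq c>0$ it dominates $c$ times an infinite-dimensional projection; there is no smoothing kernel to make it locally traceable). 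What is of order $C_1+C_2k$ is the Ruelle--Sullivan mass $\int\phi_k^{2}\,d\Lambda_g$ of the support, which is a different object. For the same reason your $\nu_k$ is not a finite measure, $\nu_k([\omega^{2},\infty))=\infty$, and the bound $e^{-\sqrt k\,\omega^{2}}\operatorname{tr}_{\Lambda}(\phi_k^{2})$ is vacuous, even though the integral it was meant to control is actually finite.

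The step can be repaired, but only by keeping a smoothing factor next to the cutoffs, which is exactly the paper's route: for $\lambda\geq\omega^{2}$ use $e^{-\sqrt k\,\lambda}\leq e^{-(\sqrt k-1)\omega^{2}}e^{-\lambda}$, so the tail is bounded by $e^{-(\sqrt k-1)\omega^{2}}\operatorname{tr}_{\Lambda}(\phi_k e^{-D^2_{\epsilon,u}}\phi_k)$, and the latter trace is finite and at most linear in $k$ because $e^{-D^2_{\epsilon,u}}$ has a uniformly bounded leafwise Schwartz kernel while $\operatorname{supp}\phi_k$ has $\Lambda_g$-volume at most $\Lambda_g(X_0)+\Lambda(\mathcal{T}_0)(k+1)$. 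The paper implements precisely this mechanism: it splits off the finite-trace projection $\Pi_{\sigma}=\chi_{[-\sigma,\sigma]}(D_{\epsilon,u})$, writes $e^{-tD^2_{\epsilon,u}}(1-\Pi_{\sigma})=e^{-(t-1)D^2_{\epsilon,u}}(1-\Pi_{\sigma})\,e^{-D^2_{\epsilon,u}}$, estimates the first factor in operator norm by $e^{-(t-1)\sigma}$ and the trace-class factor $\phi_k D^2_{\epsilon,u}e^{-D^2_{\epsilon,u}}\phi_k$ by $A(\Lambda_g(X_0)+\Lambda(\mathcal{T}_0)k)$, and treats the $\Pi_{\sigma}$ piece exactly as you treat $(0,\omega^{2})$. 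A minor additional point: pulling the $t$-integral inside the supertrace at the start should be justified (normality of $\operatorname{tr}_{\Lambda}$ applied to the increasing family $\phi_k(e^{-\sqrt kD^2_{\epsilon,u}}-e^{-TD^2_{\epsilon,u}})\phi_k$, together with $|\operatorname{str}_{\Lambda}(B)|\leq\operatorname{tr}_{\Lambda}(B)$ for $B\geq0$), but that is easily supplied; the genuine defect is the infinite trace of the cutoff.
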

\begin{proof}From the very definition of the $\Lambda$--essential spectrum there exists some positive $\sigma=\sigma(u)$ such that the projection $\Pi_{\sigma}=\chi_{[-\sigma,\sigma]}(\deu)$ has finite $\Lambda$--trace. Then, from the trick $e^{-D^2_{\epsilon,u}}=e^{-D^2_{\epsilon,u}/2}(1-\Pi_{\sigma}+\Pi_{\sigma})e^{-D^2_{\epsilon,u}/2}$
$$
|\beta_{02}(k,s)|\leq \underbrace{{\int_{\sqrt{k}}^{\ty}e^{-(t-1)\sigma
}|\stru(\pk \deuq e^{-\deuq}\pk)|dt}}_{\beta_{021}(k,s)}+\underbrace{\int_{\sqrt{k}}^{\ty}|\stru(\deuq e^{-t\deuq}\Pi_{\sigma})|dt}_{\beta_{022}(k,s)}
$$
 Now the Schwartz kernel of $(\deuq e^{-\deuq})_x$ is uniformly bounded in $x$ and varies in a Borel fashion transversally. When  forming the  $\Lambda$--supertrace we are integrating a longitudinal measure with $C^{\ty}$--density w.r.t. the longitudinal measure given by the Riemannian density. Let as usual be $\Lambda_g$ the measure given by the integration of the Riemannian longitudinal measure with the transverse measure $\Lambda$. If $A$ is a uniform bound on the leafwise Schwartz kernels of $(\deuq e^{-\deuq})$, and $\mathcal{T}_0$ is a complete transversal contained in the normal section of the cylinder (the same of Lemma \ref{309}), we can extimate 
 \\ \noindent $\beta_{021}(k,s)\leq \int_{\sqrt{k}}^{\ty}A(\Lambda_g(X_0)+\Lambda(\mathcal{\mathcal{T}_0})k)e^{-(t-1)\sigma}dt\longrightarrow_{k\rightarrow \ty}0. $
 
 \noindent For the second addendum, changing the order of integration $dt \rightarrow d\mu_{\Lambda,D_{\epsilon,u}}$,
\begin{align*}\beta_{022}(k,s)\leq C\tsi e^{-\sqrt{k}x^2}\dmd \leq C \mun([-\sigma,\sigma])\longrightarrow_{k\rightarrow \ty} 0 
\end{align*} since the $\Lambda$--essential spectrum of $D_{\epsilon,u}$ has a gap around zero and the normality property of the trace.
\end{proof}
\noindent It is time to update equation \eqref{indf},
\begin{equation}\label{passagg}
\nonumber \indu(\deupp)=\lim_{k\rightarrow \ty}[\alpha_0(k,s)-\beta_0(k,s)]=\lim_{k\rightarrow \ty}\operatorname{LIM}_{s\rightarrow 0}\alpha_0(k,s)-1/2\eta_{\Lambda}(D_{\epsilon,u}^{\mathcal{F}_{\partial}}).
\end{equation}
\begin{lem}There exists a function $g(u)$ with $\lim_{u\rightarrow 0}g(u)=0$ such that for $0<\epsilon<u$,
\begin{align*}\lim_{k\rightarrow \ty}\operatorname{LIM}_{s\rightarrow 0}\alpha_0(k,s)=\lim_{k\rightarrow \ty}\operatorname{LIM}_{s\rightarrow 0}\stru(\pk \hdeups \pk)=\langle \widehat{A}(X)\operatorname{Ch(E/S)},C_{\Lambda}\rangle +g(u).\end{align*}
\noindent Here the leafwise characteristic form $\widehat{A}(X)\operatorname{Ch(E/S)}$ is supported on $X_0$, in particular it belongs to the domain of the Ruelle--Sullivan current $C_{\Lambda}$ associated to the transverse measure $\Lambda.$\end{lem}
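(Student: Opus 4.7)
The plan is to split the cut--off heat supertrace $\alpha_0(k,s)=\stru(\pk\hdeups\pk)$ into a contribution from the compact piece $X_{1/2}$ and a contribution from the cylinder $Z_{1/2}$, and to extract the limit $\operatorname{LIM}_{s\to 0}$ in each region separately. The key point is that on $X_{1/2}$ we have $\dot\theta\equiv 0$, so $\deu$ coincides there with the unperturbed leafwise Dirac operator $D$; on $Z_{1/2}$, instead, $\deu$ is of product type, and the leafwise Lichnerowicz--Chern--Weil form $\widehat A\,\operatorname{Ch}(E/S)$ has vanishing top degree because the Clifford curvature has no component in the cylindrical direction.

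First I would treat the compact region. Using the relative Cheeger--Gromov--Taylor estimate (Proposition \ref{rel12}) applied to the pair $(\deu,D)$ on the open set $U=X_{1/2}$, where the two operators literally coincide, I obtain that for $z$ at positive distance $r_2$ from $\partial X_{1/2}$ inside $X_{1/4}$,
$$\bigl|[\hdeupx]_{(z,z)}-[e^{-tD_x^2}]_{(z,z)}\bigr|\leq \mathcal C\,e^{-c/t},$$
uniformly in $x$ (i.e.\ uniformly over leaves). Therefore $\operatorname{LIM}_{s\to 0}$ commutes with the interior integration, and, pointwise on each leaf, the local index density
$$\lim_{s\downarrow 0}\operatorname{str}^{E}\bigl[e^{-sD_{x}^{2}}\bigr]_{(y,y)}=\bigl\{\widehat A(L_x,\nabla)\operatorname{Ch}(E/S)_{|L_x}\bigr\}_{\operatorname{top}}(y)$$
is provided by the pointwise Atiyah--Singer--Getzler--Patodi computation, uniformly with respect to the transverse parameter (bounded geometry plus uniform Clifford structure). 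Integrating the resulting leafwise densities against the Ruelle--Sullivan current associated with $\Lambda$---exactly the foliated local index theorem of Connes as presented in \cite{Cos,MoSc,hl}---one obtains
$$\lim_{k\to \infty}\operatorname{LIM}_{s\to 0}\stru\bigl(\chi_{X_{1/2}}\phi_k^2\,\hdeups\bigr)=\langle\widehat A(X)\operatorname{Ch}(E/S),C_\Lambda\rangle,$$
using that $\widehat A(X)\operatorname{Ch}(E/S)$ is supported in $X_{1/2}$ thanks to the product--type geometry near the boundary, and that $\phi_k\equiv 1$ on $X_{1/2}$ for $k\geq 1$.

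Next I would handle the cylindrical piece $\stru\bigl(\chi_{Z_{1/2}}\pkd\hdeups\bigr)$. On $Z_{1/2}$ the operator $\deu$ has the product form \eqref{asc}, so by the same comparison argument as in Lemma \ref{309} (relative Cheeger--Gromov--Taylor with the operator $S_{\epsilon,u}$ of \eqref{sconfronta} on the doubly infinite cylinder) I can replace $[\hdeupx]$ on the cylindrical diagonal, up to an $e^{-c/t}$ error, by $[\dessxo]_{(z,z)}=(4\pi t)^{-1/2}[e^{-t(\deufo)^2}]_{(x,x)}$. The pointwise supertrace of this product kernel, evaluated with the identity $\operatorname{str}^E(\bullet)=-2\operatorname{tr}^F(c(\partial_r)\Omega\,\bullet)$, produces a function on the boundary foliation whose $\operatorname{LIM}_{s\to 0}$ is controlled by the heat expansion of $e^{-t(\deufo)^2}$ and by the $1/\sqrt t$ factor. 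Since the $\operatorname{LIM}_{s\to 0}$ is the constant term in the expansion in powers of $s$, and since the even--dimensional boundary foliation together with the Bismut--Freed type cancellation used in the proof of existence of $\eta_\Lambda(\deuf)$ (see the third step of Ramachandran's theorem) kills the top--degree local terms of $[\deufo\,e^{-t(\deufo)^2}]_{(x,x)}$ in the required range, the cylindrical contribution produces only a remainder $g(u,\epsilon)$ which is finite and depends continuously on $u$. The explicit form of the remainder is a fiberwise integral of boundary heat densities against the transverse measure $\Lambda_\partial$ on $\mathcal F_\partial$.

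Finally, the vanishing of $g(u)$ as $u\downarrow 0$ follows because at $u=0$ the cylinder operator $B_{\epsilon,0}$ is of exact product type with spectral gap around $0$ coming from the perturbation by $\Pi_\epsilon$, and the even--dimensional local supertrace on a product cylinder is identically zero in top degree. A continuity argument in $u$ (smoothness of $\stru(\deus \hdeut)$ in $u$ established as in \cite{Vai}, combined with the dominated estimates on the cylinder coming from \eqref{1333}) shows $\lim_{u\to 0}g(u)=0$. The hardest step is isolating the cylindrical $u$-dependent remainder: one must control the passage from the Schwartz kernel comparison on $Z_{1/2}$ to the $\operatorname{LIM}_{s\to 0}$ of the transverse integral, ensuring that the diverging powers of $s^{-1/2}$ inherent to the cylindrical heat kernel cancel against the parity of the boundary characteristic forms, just as in the classical APS cylindrical computation of \cite{AtPaSi1,Vai}.
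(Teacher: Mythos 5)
Your treatment of the interior region is essentially the paper's: a relative Cheeger--Gromov--Taylor comparison of $D_{\epsilon,u}$ with the unperturbed Dirac operator where they coincide (the paper works on the doubled manifold $2X_0$ rather than on $X_{1/2}$, a cosmetic difference), followed by the pointwise Atiyah--Bott--Patodi/Getzler computation and integration of the leafwise densities against the Ruelle--Sullivan current. That part is fine.

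The cylindrical part has a genuine gap. You are analysing the wrong density there: $\alpha_0(k,s)$ involves the diagonal supertrace of $e^{-sD^2_{\epsilon,u}}$ itself, not $\operatorname{tr}^F\big[D^{\mathcal F_{\partial}}_{\epsilon,u}e^{-t(D^{\mathcal F_{\partial}}_{\epsilon,u})^2}\big]$. The identity $\operatorname{str}^E(c(\partial_r)\Omega\,\cdot)=-2\operatorname{tr}^F(\cdot)$ and the Bismut--Freed cancellation belong to the $\beta_{01}$ (eta) computation, where the factor $c(\partial_r)\partial_r\phi_k^2$ is present; moreover the boundary foliation is odd-dimensional (leaves of dimension $2p-1$), so the appeal to an ``even-dimensional boundary foliation'' and to parity of boundary characteristic forms is not available. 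As written, your argument therefore does not control the $s^{-1/2}$ divergence of the product heat kernel on the far cylinder. The paper's mechanism is different and index-theoretic: for $r>4$ the comparison with $S_{\epsilon,u}$ reduces the contribution to $(b-a)(4\pi s)^{-1/2}\operatorname{str}_{\Lambda}\big(e^{-s(D^{\mathcal F_{\partial}}_{\epsilon,u})^2}\big)$, and by the foliated McKean--Singer formula this supertrace equals $\operatorname{ind}_{\Lambda}(D^{\mathcal F_{\partial}}_{\epsilon,u})$ for every $s$, which vanishes because the perturbed boundary operator is invertible for $0<|u|<\epsilon$; so that piece is identically zero, $1/\sqrt{s}$ factor and all.

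Relatedly, the defect $g(u)$ is not a remainder spread over the whole cylinder: it is produced exactly in the transition collar $\partial X_0\times[0,4]$ where $\dot{\theta}$ is non-constant. There the paper first removes the $\Pi_{\epsilon}$ term by a Duhamel plus Cheeger--Gromov--Taylor comparison of $e^{-sS^2_{\epsilon,u}}$ with $e^{-sS^2_{0,u}}$, and then exploits that $S_{0,u}=D+\dot{\theta}u\Omega$ is a smooth $u$-family of generalized Laplacians, whose local heat coefficients depend smoothly on $u$ and reduce at $u=0$ to those of the product Dirac operator; this is what defines $g(u)$ and yields $g(u)\to 0$. Your continuity argument instead cites smoothness in $u$ of $\operatorname{tr}_{\Lambda}(D_{\epsilon,u}e^{-tD^2_{\epsilon,u}})$ (again a statement about the eta integrand) and the cut-off estimate for operators with separated supports, which by themselves do not give the required uniform-in-$s$ control of the constant term of the small-$s$ expansion on the transition region.
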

\begin{proof}This is the investigation of the behavior of the local supertrace of the family of the leafwise heat kernels $\operatorname{str}^E[e^{-s D^2_{\epsilon,u}}]_{|\operatorname{diag}}$ on the leafwise diagonals. We can do it dividing into three separate cases 

 {\bf{1.}} For $z\in X_0$ everything goes as in the classical computation by Atiyah Bott and Patodi \cite{AtBo} 
$\operatorname{LIM}_{s\rightarrow 0}\operatorname{str}^E[e^{-sD^2_{\epsilon,u,z}}]_{(x,x)}dg_z=\widehat{A}(X,\nabla)\operatorname{Ch}(E/S,\nabla)(x),$ where $dg_z$ is the Riemannian density on the leaf $L_z$.

 {\bf{2.}} In the middle, $z\in \partial X_0 \times [0,4]$ there's the cause of the presence of the defect function $g(u)$, more precisely we show that the asymptotic development of the local supertrace is the same for the comparison operator $S_{0,u}$ defined above 
$$\operatorname{str}^E([e^{-sD^2_{\epsilon,u,z}}])_{(z,z)}\simeq \sum_{j\in \mathbb{N}}a_j(S_{0,u})_{(z)}s^{(j-\operatorname{dim}{\mathcal{F}})/2}$$
 with coefficients 
$a_j(S_{0,u})$ smoothly depending on $u$ satisfying 
$a_j(S_{0,u})=0$ for $j\leq \operatorname{dim}\mathcal{F}/2$ 

 {\bf{3.}} Away from the base of the cylinder $z=(y,r)\in Z$ $r>4$ we find
$[e^{-D^2_{\epsilon,u,z}}]_{(y,r)}=0.$
Below the proofs of these facts.

\bigskip

 {\bf{1.}} We can consider the doubled manifold $2X_0$ so that we can apply the relative estimate of type Cheeger--Gromov--Taylor in the non--cylindrical case (the perturbation starts from the cylinder).
Proposition \ref{rel12}
 shows that the two Schwartz kernels of the Dirac operator and the perturbed operator $D_{\epsilon,u}$ have the same development as $t\rightarrow 0$ 
The local computation of Atiyah Bott and Patodi, or the Getzler rescaling (\cite{Me},\cite{Ge}) can be performed as in the classical situation.

 {\bf{2.}} We are going to use an argument of comparison with the leafwise operator

\noindent $S_{\epsilon,u}:=c(\partial_r)\partial_r+\Omega(D^{\mathcal{F}_{\partial}}+\dot{\theta}(u-\Pi_{\epsilon}D^{\mathcal{F}_{\partial}}))$ on the infinite cylinder $\partial X_{0}\times \R$ equipped with the product foliation $\mathcal{F}_{\partial}\times \R$. Notice that, due to the presence of $\dot{\theta}$ this is a slightly different form of the operator \eqref{sconfronta}.  Choose some function $\psi_1$ supported in $\partial X_0\times [-1,5]$ and ${\psi_1}_{|\partial X_0 \times [0,4]}=1$. The first fact we show is
$\lim_{s\rightarrow 0}\stru(\psi_1 (e^{-sS^2_{\epsilon,u}}-e^{-sS^2_{0,u}})\psi_1)=0.$ \\ \noindent Now, $S_{\epsilon,u}=S_{0,u}-\Omega \Pi_{\epsilon}D^{\mathcal{F}_{\partial}}=c(\partial_r)\partial_r+H$ with $H=\Omega D^{\mathcal{F}_{\partial}}+\Omega \dot{\theta}u$ hence 
\begin{equation}\label{straccia}
\essp-\esspo=-\Phi \dotto \piep \deffo-2(\deffo+\dotto u)(\dotto \piep \deffo)+(\ome \dotto \piep \deffo)^2.
\end{equation}Apply the Duhamel formula $$ 
|\stru(\psi_1(\essp-\esspo)\psi_1|=
\Big|\int_0^s \stru(\psi_1^2 \Pi_{\epsilon})e^{-\delta S_{0,u}^2}(S^2_{\epsilon,u}-S^2_{0,u})\Pi_{\epsilon}e^{-(s-\delta)S^2_{\epsilon,u}}d\delta\Big|.
$$ \noindent Again from the Cheeger--Gromov relative estimates \eqref{1333}

$$|\tru(\psi_1 e^{-\delta S_u^2}\Pi_{\epsilon}\psi_1)|\leq C {\delta^{-1/2}},\quad 
\|(S^2_{\epsilon,u}-S^2_{0,u})\Pi_{\epsilon}e^{-(s-\delta)S^2_{\epsilon,u}}\|\leq C(s-\delta)^{-1/2}$$ with the constants independent from $|u|<\epsilon.$
Then the integral of the supertrace $\eqref{straccia}$ can be estimated by
the function of s,  $h(s)=C\int^s_{0}(s-\delta)^{-1/2}\delta^{-1/2}d\delta\longrightarrow_{s\rightarrow 0}0.$ In fact first split the integral into $\int_{0}^{s/2}+\int_{s/2}^s$ to prove finiteness then use the absolutely continuity of the integral for convergence to zero.
\noindent Now from the limit $\lim_{s\rightarrow 0}\stru(\psi_1 (e^{-sS^2_{\epsilon,u}}-e^{-sS^2_{0,u}})\psi_1)=0$ and the comparison argument we get that the asymptotic expansion for $s\rightarrow 0$ of $\operatorname{str}_{\Lambda}(\phi_ke^{-sD^2_{\epsilon,u}\phi_k})$ is the same of the comparison operator 
$S_{0,u}=\underbrace{c(\partial_r)\partial_r+\Omega D^{\mathcal{F}_{\partial}}}_{D}+\underbrace{\quad \quad  \quad \dot{\vartheta}u\Omega \quad \,\quad \quad \,}_{\textrm{bounded perturbation}}$ 
on the infinite cylinder. This is a very simple $u$--family of generalized laplacians (see \cite{BeGeVe} Chapter 2.7) and the
Duhamel formula
$e^{-tS^2_{0,u}}-e^{-tS^2_{0,0}}=-\int_0^ut\dot{\vartheta}\Omega e^{-t S_{0,v}}dv ds $
shows what is written in the statement i.e. $\operatorname{str}^E([e^{-sD^2_{\epsilon,u,z}}])_{(z,z)}\simeq \sum_{j\in \mathbb{N}}a_j(S_{0,u})_{(z)}s^{(j-\operatorname{dim}{\mathcal{F}})/2}$
 where the coefficients 
$a_j(S_{0,u})$ depend smoothly on $u$ and satisfy 
$a_j(S_{0,u})=0$ for $j\leq \operatorname{dim}\mathcal{F}/2$. One can take for $g$ the function 
\noindent $g(u):=\sum_{j=0}^{ \operatorname{dim}{\mathcal{F}}/2}\int_{\partial X_0 \times [0,4]}   a_j(S_{0,u})_{(z)}s^{(j-\operatorname{dim}{\mathcal{F}})/2}d\Lambda_g.$

 {\bf{3.}} This is done again by comparison with $S_{\epsilon,u}$. Consider the $r$--depending family of tangential tangential measures $(y,r)\in \partial X_0 \times [a,b] \longmapsto \operatorname{str}^E{e^{-sD^2_{\epsilon,u,(x,r)}}dxdr}$ where $x\in L_{(y,r)}$, once coupled with $d\Lambda$ it gives the measure $\mu:=\operatorname{str}^E{e^{-sD^2_{\epsilon,u,(x,r)}}dxdr}\cdot d\Lambda$ on $X$.
The Fubini theorem can certainly used during the integration process to find out that the mass of $\mu$ can be computed integrating first the $r$--depending tangential measures $y\longmapsto \operatorname{str}^E{e^{-sD^2_{\epsilon,u,(y,r)}}dy}$ against $\Lambda$ on the foliation at infinity $(\partial X_0,\mathcal{F}_{\partial})$ then the resulting function of $r$ on $[a,b],$
$
\operatorname{LIM}_{s\rightarrow 0}\int_{\partial X_0 \times [a,b]}d\mu=
\operatorname{LIM}_{s\rightarrow 0}\int_a^b \int_{\partial X_0}\operatorname{str}^E([e^{-sS^2_{\epsilon,u}}])_{(y,r),(y,r)})dy\cdot d\Lambda dx$ and this is equal to $\operatorname{LIM}_{s\rightarrow 0}\dfrac{b-a}{\sqrt{4\pi s}}\stru(e^{-s(D_{\epsilon,u}^{\mathcal{F}_{\partial}})^2})=0
$
in fact the boundary operator 
$D_{\epsilon,u}^{\mathcal{F}_{\partial}}$ is invertible and the well--known Mc--Kean--Singer formula for foliations on compact ambient manifolds (formula (7.39) in \cite{MoSc})
 says that 
$\operatorname{ind}_{\Lambda}(D_{\epsilon,u}^{\mathcal{F}_{\partial}})=\stru{e^{-s      
(D_{\epsilon,u}^{\mathcal{F}_{\partial}})^2}}$ independently from $s$.
\end{proof} 
\noindent Finally \eqref{passagg} becomes 
\begin{equation}\label{sedici}
\operatorname{ind}_{\Lambda}(D_{\epsilon,u}^+)=\langle \widehat{A}(X)\operatorname{Ch}(E/S),C_{\Lambda}\rangle-1/2 \eta_{\Lambda}(D_{\epsilon,u}^{\mathcal{F}_{\partial}})+g(u).
\end{equation}
\begin{thm}
The Dirac operator has finite dimensional $L^2-\Lambda$--index and the following formula holds
\begin{align}
\label{2111}\operatorname{ind}_{L^2,\Lambda}(D^+)=\langle\widehat{A}(X)\operatorname{Ch}(E/S),[C_{\Lambda}]\rangle +1/2[\eta_{\Lambda}(D^{\mathcal{F}_{\partial}})-h^+_{\Lambda}+h^{-}_{\Lambda}]\end{align} where
\begin{equation}\label{1001}
h^{\pm}_{\Lambda}:=\operatorname{dim}_{\Lambda}(\operatorname{Ext}(D^{\pm})-\operatorname{dim}_{\Lambda}(\operatorname{Ker}_{L^2}(D^{\pm})\end{equation} 
with the dimension of the space of extended solutions as defined in the definition \ref{170} i.e.
$\operatorname{dim}_{\Lambda}\operatorname{Ext}(D^{\pm}):=\operatorname{dim}_{\Lambda} \overline{\operatorname{Ext}(D^{\pm})}^{e^{u\theta}L^2}$ independently from small $u>0$.
\end{thm}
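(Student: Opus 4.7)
The plan is to deduce \eqref{2111} from the APS--type identity \eqref{sedici}, already established for the Breuer--Fredholm perturbation $D_{\epsilon,u}$, by taking the two successive limits $u\downarrow 0$ (at fixed $\epsilon$) and then $\epsilon\downarrow 0$. I will exploit Lemma \ref{111} and the symmetry of the Ramachandran eta invariant (Proposition \ref{0348}(a)-(b)) for the inner limit, and Proposition \ref{hj} together with the continuity estimate of Proposition \ref{0348}(c) for the outer one.

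Concretely, I will first fix $\epsilon>0$ and apply \eqref{sedici} at the two parameters $+u$ and $-u$ with $0<u<\epsilon$. Averaging the two identities symmetrises the eta term, since Proposition \ref{0348}(b) gives
$$\tfrac{1}{2}\bigl[\eta_{\Lambda}(D^{\mathcal{F}_\partial}_{\epsilon,u}) + \eta_{\Lambda}(D^{\mathcal{F}_\partial}_{\epsilon,-u})\bigr] = \eta_{\Lambda}(D^{\mathcal{F}_\partial}_{\epsilon,0}),$$
while the residual $\tfrac{1}{2}(g(u)+g(-u))$ vanishes as $u\downarrow 0$ by continuity of $g$. Then I will invoke Lemma \ref{111}(2), which converts the one-sided limits $\lim_{u\downarrow 0}\operatorname{ind}_{\Lambda}(D^+_{\epsilon,\pm u})$ into $\operatorname{ind}_{L^2,\Lambda}(D^+_\epsilon)$ modulo the corrections $h^\pm_{\Lambda,\epsilon}$. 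Averaging the two resulting relations produces the intermediate formula
$$\operatorname{ind}_{L^2,\Lambda}(D^+_\epsilon)=\langle \widehat{A}(X)\operatorname{Ch}(E/S),C_\Lambda\rangle+\tfrac{1}{2}\bigl[\eta_{\Lambda}(D^{\mathcal{F}_\partial}_{\epsilon,0})-h^+_{\Lambda,\epsilon}+h^-_{\Lambda,\epsilon}\bigr].$$

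Next, I will let $\epsilon\downarrow 0$. Proposition \ref{hj}(1)-(3) supplies the convergences $\operatorname{ind}_{L^2,\Lambda}(D^+_\epsilon)\to\operatorname{ind}_{L^2,\Lambda}(D^+)$ and $h^\pm_{\Lambda,\epsilon}\to h^\pm_\Lambda$. For the eta piece, Proposition \ref{0348}(c) applied to the boundary leafwise operator yields
$$\bigl|\eta_{\Lambda}(D^{\mathcal{F}_\partial})-\eta_{\Lambda}(D^{\mathcal{F}_\partial}_{\epsilon,0})\bigr|\le\mu_{\Lambda,D^{\mathcal{F}_\partial}}\bigl((-\epsilon,\epsilon)\bigr),$$
which tends to $\mu_{\Lambda,D^{\mathcal{F}_\partial}}(\{0\})$ by normality of the $\Lambda$--trace on the compact foliation at infinity. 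The main obstacle will be to verify that these three limits are mutually compatible: specifically, the residual atom $\mu_{\Lambda,D^{\mathcal{F}_\partial}}(\{0\})=\operatorname{dim}_{\Lambda}\operatorname{Ker}_{L^2}(D^{\mathcal{F}_\partial})$ must be absorbed inside the corrected dimensions $h^\pm_\Lambda$, in the spirit of the classical identity \eqref{hsp} recalled in Section \ref{aaps}. Using the explicit representation of extended solutions from Proposition \ref{rappacon} together with Definition \ref{170}, and the stability argument based on bounded maps with dense range between weighted $L^2$ spaces already used in Definition \ref{170}, I will check that this atom is already carried by the $h^\pm_\Lambda$, so that assembling the three convergences delivers \eqref{2111} in its stated form.
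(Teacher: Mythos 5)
Your route is, for most of its length, exactly the paper's: apply \eqref{sedici} at the two parameters $\pm u$, symmetrise the eta term with Proposition \ref{0348}(b) and kill $g(\pm u)$ in the limit, use Lemma \ref{111} to convert the one--sided limits of $\operatorname{ind}_{\Lambda}(D^{+}_{\epsilon,\pm u})$ into $\operatorname{ind}_{L^2,\Lambda}(D^{+}_{\epsilon})$ up to $h^{\pm}_{\Lambda,\epsilon}$, and then send $\epsilon\downarrow 0$ using Proposition \ref{hj}. Up to the intermediate formula for $\operatorname{ind}_{L^2,\Lambda}(D^{+}_{\epsilon})$ your argument and the paper's coincide.

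The final step is where your proposal breaks down. There is no residual atom at $0$ in the eta limit: the perturbation is built from $\Pi_{\epsilon}=\chi_{I_{\epsilon}}(D^{\mathcal{F}_{\partial}})$ with $I_{\epsilon}=(-\epsilon,0)\cup(0,\epsilon)$, and the eta invariant integrates $\operatorname{sign}(\lambda)$ only over the nonzero spectrum, so the discrepancy $\eta_{\Lambda}(D^{\mathcal{F}_{\partial}})-\eta_{\Lambda}(D^{\mathcal{F}_{\partial}}_{\epsilon})$ is controlled by $\operatorname{tr}_{\Lambda}(\Pi_{\epsilon})=\mu_{\Lambda,D^{\mathcal{F}_{\partial}}}\bigl((-\epsilon,0)\cup(0,\epsilon)\bigr)$, not by $\mu_{\Lambda,D^{\mathcal{F}_{\partial}}}((-\epsilon,\epsilon))$; this tends to $0$ by normality of the trace because $I_{\epsilon}\downarrow\emptyset$ and $\mu_{\Lambda,D^{\mathcal{F}_{\partial}}}$ is finite on bounded intervals (the boundary foliation lives on a compact manifold). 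The kernel of $D^{\mathcal{F}_{\partial}}$ never enters; the bound stated in Proposition \ref{0348}(c) is simply coarser than what its proof gives. Moreover, the fallback mechanism you propose --- letting a putative defect $\mu_{\Lambda,D^{\mathcal{F}_{\partial}}}(\{0\})$ be absorbed by $h^{\pm}_{\Lambda}$ --- cannot work: the convergences $h^{\pm}_{\Lambda,\epsilon}\to h^{\pm}_{\Lambda}$ are already forced by Proposition \ref{hj} together with Definition \ref{170} and \eqref{1001}, so the limit of the intermediate formula has no slack left; if the eta defect were genuinely $\mu_{\Lambda,D^{\mathcal{F}_{\partial}}}(\{0\})\neq 0$, your assembly would yield a formula differing from \eqref{2111} by that amount rather than \eqref{2111} itself. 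The identity \eqref{hsp} you invoke, in its foliated form $h=h^{+}_{\Lambda}+h^{-}_{\Lambda}$, is obtained in the last section only by comparing the final formula with Ramachandran's theorem; it is neither available nor needed at this stage. Replacing your final paragraph by the observation that $\operatorname{tr}_{\Lambda}(\Pi_{\epsilon})\to 0$ closes the proof exactly as in the paper.
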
\begin{proof}
Start from 
\begin{equation}\label{234}\operatorname{ind}_{L^2,\Lambda}(D^+_{\epsilon})=\lim_{u\downarrow 0}1/2\{\operatorname{ind}_{\Lambda}(D^+_{\epsilon,u})+\operatorname{ind}_{\Lambda}(D^+_{\epsilon,-u})+h^{-}_{\Lambda,\epsilon}-h^+_{\Lambda,\epsilon}\},\end{equation} here $h^{\pm}_{\Lambda,\epsilon}=\operatorname{dim}_{\Lambda}(\operatorname{Ext}(D^{\pm}_{\epsilon}))-\operatorname{dim}_{\Lambda}(\operatorname{Ker}_{L^2}(D^{\pm}_{\epsilon})).$ For now proposition \ref{exx} says that 
$\operatorname{Ext}(D^{\pm}_{\epsilon})=\operatorname{Ker}_{L^2}(D^{\pm}_{\epsilon,\pm})=\operatorname{Ker}_{e^{u\theta}L^2}(D^{\pm}_{\epsilon}).$
Use the identity
\eqref{sedici}
into \eqref{234} and pass to the $u\longrightarrow 0$ limit taking into account proposition \ref{0348}
$$\operatorname{ind}_{L^2,\Lambda}(D^+_{\epsilon})=\langle \widehat{A}(X)\operatorname{Ch}(E/S),C_{\Lambda}\rangle
+\dfrac{h^{-}_{\Lambda,\epsilon}-h^{+}_{\Lambda,\epsilon}}{2}+\dfrac{\eta_{\Lambda}(D_{\epsilon}^{\mathcal{F}_{\partial}})}{2}.$$
It remains to pass to the $\epsilon$--limit remembering that:
$\lim_{\epsilon\downarrow 0}\operatorname{ind}_{L^2,\Lambda}(D^+_{\epsilon})=\operatorname{ind}_{L^2,\Lambda}(D^+)$ by proposition \ref{hj}, $\lim_{\epsilon \downarrow 0}h^{-}_{\Lambda,\epsilon}-h^{+}_{\Lambda,\epsilon}=h^--h^+$ again by proposition \ref{hj}
and \\ \noindent $\lim_{\epsilon \downarrow 0}\eta_{\Lambda}(D_{\epsilon}^{\mathcal{F}_{\partial}})=\eta_{\Lambda}(D^{\mathcal{F}_{\partial}})$ by proposition \ref{0348}.
\end{proof}

\section{Comparison with Ramachandran index formula}
The Ramachandran index formula \cite{Rama} stands into index theory for foliations exactly as the Atiyah--Patodi--Singer formula stays in the classical theory. Our formula corresponds to the A.P.S cylindrical point of view. In this section we prove that the two formulas are compatible and we do it exactly in the way it is done for the single leaf case by APS. First we recall the Ramachandran Theorem
\subsection{The Ramachandran index}
\noindent Since we have chosen an opposite orientation for the boundary foliation the Ramachandran index formula here written differs from the original in \cite{Rama} exactly for its sign (as in section \ref{aaps} for the APS formula). So let us consider the Dirac operator builded in section 
\ref{geom} but acting only on the foliation restricted to the compact manifold with boundary $X_0$.
To be precise with the notation let us call $\mathcal{F}_0$ the foliation restricted to $X_0$ with leaves $\{L_x^{0}\}_x$, equivalence relation $\mathcal{R}_0$ and $D^{\mathcal{F}_0}$ the Dirac operator
acting on the field of Hilbert spaces 
$\{L^2(L_x^{0};E)\}_{x\in X_0}$. Near the boundary $D^{\mathcal{F}_0}=\left(\begin{array}{cc}0 & D^{\mathcal{F}_0^-} \\D^{\mathcal{F}_0^+} & 0\end{array}\right)=\left(\begin{array}{cc}0 & -\partial_r+D^{\mathcal{F}_{\partial}} \\ 
 \partial_r+D^{\mathcal{F}_{\partial}}
& 0\end{array}\right)$ with the boundary operator $D^{\mathcal{F}_{\partial}}.$ Let us consider the field of APS boundary conditions   $$B=\left(\begin{array}{cc}\chi_{[0,\ty)}(D^{\mathcal{F}_{\partial}}) & 0 \\0 & \chi_{(-\ty,0)}(D^{\mathcal{F}_{\partial}})\end{array}\right)=\left(\begin{array}{cc}P & 0 \\0 & \operatorname{I}-P\end{array}\right)$$ acting on the boundary foliation. In the order of ideas of the paper by Ramachandran this is a \underline{self adjoint boundary condition} i.e. its interacts with the Dirac operator in the following way:

{\bf{1.}} $B$ is a field of bounded self--adjoint operators with $\sigma B+B\sigma=\sigma$ where $\sigma$ is Clifford multiplication by the unit (interior) normal.

{\bf{2.}} If $b$ is the operator of restriction to the boundary then $(s_1,D^{\mathcal{F}_0}s_2)=(D^{\mathcal{F}_0} s_1,s_2)$ for every couple of smooth sections $s_1$ and $s_2$ such that $Bbs_1=0$ and $Bbs_2=0$.  

\noindent Next Ramachandran proves using the Browder--G{\aa}rding expansion that there's a field of restriction operators
$H^k(X_0;E)\longrightarrow H^{k-1/2}(X_0;E)$ extending $b$ where the Sobolev spaces are defined taking into account the boundary. More precisely for a leaf $L^0_x$, the space $H^k(L_x^{0};E)$ is the completion of $C^{\ty}_{c}(L_x^{0};E)$ (support possibly touching the boundary) under the usual $L^2$--based Sobolev norms. It follows from the restriction theorem that one can define the domain of $D$ with boundary condition $B$ as $H^{\ty}(X_0;E,B):=\{s \in H^{\ty}(X_0;E):Bbs=0\}.$
\begin{thm}\label{1122}(Ramachandran \cite{Rama})
The family of unbounded operators $D$ with domain $H^{\ty}(X_0;E,B)$ is essentially self--adjoint and Breuer--Fredholm in the Von Neumann algebra of the foliation with finite 
$\Lambda$--index  $\operatorname{ind}_{\Lambda}(D^{\mathcal{F}_0})=$ given by the formula
\begin{align}\nonumber\operatorname{ind}_{\Lambda}(D^{\mathcal{F}_0})&=\operatorname{dim}_{\Lambda}(\operatorname{Ker}(D^{\mathcal{F}_0^{+}}))-\operatorname{dim}_{\Lambda}(\operatorname{Ker}(D^{\mathcal{F}_0^-}))\\ \label{232}&=
\langle\widehat{A}(X)\operatorname{Ch}(E/S),C_{\Lambda}\rangle +1/2[\eta_{\Lambda}(D^{\mathcal{F}}_0) -h]\end{align} 
\end{thm}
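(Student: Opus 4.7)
The plan is to adapt the cylindrical heat--kernel machinery of Sections \ref{finitt}--\ref{qww} to the compact manifold with boundary situation, exactly mirroring the strategy used by Atiyah--Patodi--Singer for a single leaf but phrased inside the Von Neumann algebra of the equivalence relation $\mathcal{R}_0$. First, essential self--adjointness of the field $\{D^{\mathcal{F}_0}_x\}_{x\in X_0}$ on the domain $H^\infty(X_0;E,B)$ reduces to the leafwise statement: on each $L_x^0$, a manifold with boundary and bounded geometry, the operator $D^{\mathcal{F}_0}_x$ with APS boundary condition $P_x$ is essentially self--adjoint by the standard Grubb/APS theory (pseudodifferential boundary projection, compatibility $\sigma B+B\sigma=\sigma$). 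Measurability of the family $\chi_{[0,\infty)}(D^{\mathcal{F}_\partial}_x)$ follows from the parametrized spectral theorem applied to the measurable self--adjoint field $D^{\mathcal{F}_\partial}$, so $B\in\operatorname{End}_{\Lambda}(E_{|\partial X_0})$.

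Next I would establish the Breuer--Fredholm property. On a collar $\partial X_0\times[0,\delta]$ the operator factors as $\sigma(\partial_r+D^{\mathcal{F}_\partial})$, and the APS boundary condition kills exactly the exponentially growing modes of $D^{\mathcal{F}_\partial}$ in the Browder--G{\aa}rding expansion; this produces a $\Lambda$--parametrix modulo $\Lambda$--compact error, after which the splitting principle (Theorem \ref{1}) and its corollary give a $\Lambda$--Fredholm deformation comparing the problem on $X_0$ with the problem on the elongated $X$. Finiteness of $\dim_\Lambda\ker D^{\mathcal{F}_0^\pm}$ then reduces to the finiteness property of local traces of kernel projections proved in subsection 5.1, since on a compact manifold with boundary the whole $X_0$ plays the role of the compact set $K$ in \eqref{latracciaefinita}.

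For the index formula I would apply the foliated McKean--Singer identity
\[
\operatorname{ind}_\Lambda(D^{\mathcal{F}_0})=\operatorname{str}_\Lambda\bigl(e^{-tD^{\mathcal{F}_0}D^{\mathcal{F}_0}_{B}}\bigr)\quad\text{for all }t>0,
\]
and then study the two limits $t\downarrow 0$ and $t\uparrow \infty$. For the small--time limit I would mimic the proof of the analogous Lemma in section \ref{qww}: away from the boundary the Getzler/Atiyah--Bott--Patodi rescaling yields the local index density $\widehat A(X)\mathrm{Ch}(E/S)$ integrated against the Ruelle--Sullivan current $C_\Lambda$, exactly as in case \textbf{1} of that proof; on the collar one compares with a model operator on $\partial X_0\times\mathbb{R}^+$ using the Cheeger--Gromov--Taylor relative estimates \eqref{asdl} and a leafwise Duhamel expansion, producing the boundary contribution as $\tfrac12\eta_\Lambda(D^{\mathcal{F}_\partial})$ together with the $-h/2$ correction from the $L^2$--kernel of $D^{\mathcal{F}_\partial}$ (which enters through the APS projection $\chi_{[0,\infty)}$ treating $0$ asymmetrically).

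The main obstacle I anticipate is the large--time analysis. Unlike the cylindrical setting of Section \ref{qww}, where the $\Lambda$--essential spectrum of $D_{\epsilon,u}$ has a positive gap at $0$ thanks to the perturbation, here the Breuer--Fredholmness of the boundary value problem only guarantees that the spectrum of $D^{\mathcal{F}_0}_B$ stays away from $0$ in the $\Lambda$--essential sense; one must show by normality of the trace that $\operatorname{str}_\Lambda(e^{-tD^2})$ converges as $t\to\infty$ to $\operatorname{str}_\Lambda(\chi_{\{0\}}(D^{\mathcal{F}_0}_B))$, which is the $\Lambda$--index. The cleanest route is to combine the spectral--gap argument with the finiteness property of local traces and a dominated--convergence argument on the longitudinal measures as in the $\beta_{02}$ estimate. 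Once both limits are matched, equating them yields \eqref{232}.
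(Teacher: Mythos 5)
You should first be aware that the paper does not prove Theorem \ref{1122} at all: it is Ramachandran's theorem, imported from \cite{Rama} (up to the orientation--induced sign change explained in section \ref{aaps}) and used in section 9 only as an input for the comparison with the cylindrical formula \eqref{2111}. So your proposal is not being measured against an internal argument; it is an attempted reconstruction of Ramachandran's proof, and judged as such it has genuine gaps rather than being a complete alternative route.

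The most serious gap is the boundary term. You assert that a Duhamel comparison on the collar with a model operator on $\partial X_0\times\R^+$, \emph{in the small--time limit}, ``produces the boundary contribution as $\tfrac12\eta_\Lambda(D^{\mathcal{F}_\partial})$ together with the $-h/2$ correction''. This cannot work as stated: $\eta_\Lambda$ is a global spectral invariant, defined through the full (regularized) integral $\int_0^\infty t^{-1/2}\operatorname{tr}_\Lambda\big(D^{\mathcal{F}_\partial}e^{-t(D^{\mathcal{F}_\partial})^2}\big)\,dt$, and no $\operatorname{LIM}_{t\rightarrow 0}$ of a collar supertrace yields it. In APS and in Ramachandran the boundary correction comes from an exact computation of the heat kernel of the APS boundary value problem on the half--cylinder (here leafwise, via the Browder--G{\aa}rding expansion of $D^{\mathcal{F}_\partial}$, since the spectrum is continuous), giving a genuinely $t$--dependent term whose evaluation requires both the $t$--independence of the McKean--Singer supertrace and the meromorphic continuation and regularity at $s=0$ of the foliated eta function (the Peric--Ramachandran theorem recalled in section \ref{eta}); none of these ingredients appears in your outline, and they are the heart of the proof. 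A second gap is the Fredholm property: invoking the splitting principle to ``compare the problem on $X_0$ with the problem on the elongated $X$'' is not licensed, since Theorem \ref{1} concerns fields of selfadjoint elliptic operators on the complete boundaryless leaves, and the comparison would anyway go the wrong way — the elongated operator is in general \emph{not} Breuer--Fredholm (no gap in its $\Lambda$--essential spectrum), and even at the level of indices the two problems differ by the extended--solution terms, cf. \eqref{ldueindex} and the identity $h=h^+_\Lambda+h^-_\Lambda$ derived in section 9. Ramachandran proves Fredholmness directly from the boundary condition, via the leafwise restriction theorem on Sobolev spaces and a parametrix for the half--space problem. Relatedly, your worry about the large--time limit is misplaced: once $e^{-tD_B^2}$ is shown to be $\Lambda$--trace class on the compact foliated $X_0$, the supertrace is exactly $t$--independent; what actually needs work — and is absent from the sketch — is the uniform leafwise construction of that heat kernel for an operator whose domain is cut out by the global pseudodifferential projection $P$.
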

\noindent Now we are going to prove compatibility between formula \eqref{232} and \eqref{2111}. First of all we have to relate the two Von Neumann algebras in play. Denote (according to our notation) with 
$\operatorname{End}_{\mathcal{R}_0}(E)$ the space of intertwining operators of the representation of 
$\mathcal{R}_0$ on 
$L^2(E)$ and, only in this section 
$\operatorname{End}_{\mathcal{R}_0,\Lambda}(E)$ the resulting Von Neumann algebra with trace 
$\operatorname{tr}_{\mathcal{R}_0,\Lambda}$ in order to make distinction from 
$\operatorname{End}_{\mathcal{R},\Lambda}(E)$ the Von Neumann algebra of random operators associated with the representation of 
$\mathcal{R}$. Start with a measurable fields of bounded operators 
$X_0 \ni B_x\longmapsto B_x:L^2(L_x^{0};E)\longrightarrow L^2(L_x^{0};E)$ with $B_x=B_y$ a.e. if 
$(x,y)\in \mathcal{R}_0$. There's a natural way to extend $B$ to a field of operators in 
$\operatorname{End}_{\mathcal{R}}(E)$. 

{\bf{1.}} If $x\in X_0$ simply let 
$\imath B_x$ act to $L^2(L_x;E)$ to be zero on the cylinder
$$\imath B_x:L^2(L_x^0;E)\oplus L^2(\partial L_x^0\times (0,\ty);E)\longrightarrow L^2(L_x^0;E)\oplus L^2(\partial L_x^0\times (0,\ty);E)$$
$\imath B_x(s,t):=(B_x s,0). $

{\bf{2.}} If $x\in \partial X_0\times (0,\ty)$ define $\imath B_x:=\imath B_{p(x)}$ where $p:\partial X_0\times (0,\ty)\longrightarrow \partial X_0$ is the base projection and $\imath B_{p(x)}$ is defined by point $1.$

\begin{prop}\label{987}
The map $\imath:\operatorname{End}_{\mathcal{R}_0}(E)\longrightarrow \operatorname{End}_{\mathcal{R}}(E)$ as defined above passes to the quotient to an injection 
$\imath:\operatorname{End}_{\mathcal{R}_0,\Lambda}(E)\longrightarrow \operatorname{End}_{\mathcal{R},\Lambda}(E)$ between the Von Neumannn algebras of Random operators preserving the two natural traces
$\operatorname{tr}_{\mathcal{R},\Lambda}(\imath B)=\operatorname{tr}_{\mathcal{R}_0,\Lambda}(B).$
\end{prop}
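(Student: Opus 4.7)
The plan is to verify four things in order: that $\imath B$ is a well-defined measurable intertwining family for $\mathcal{R}$; that $\imath$ passes to the quotient modulo $\Lambda$-almost-everywhere equality; that the quotient map is injective; and that it intertwines the two natural traces. The heart of the argument is that, because of the product structure on the cylinder $Z=\partial X_0\times[0,\ty)$ and the normality of $\mathcal{F}$ at the boundary, every leaf $L_x$ of $\mathcal{R}$ decomposes canonically as $L_x=L_{p(x)}^0\cup\bigl(\partial L_{p(x)}^0\times[0,\ty)\bigr)$, where $p:X\to X_0$ is the identity on $X_0$ and the base projection on $Z$. Hence an element $\{B_x\}\in\operatorname{End}_{\mathcal{R}_0}(E)$ defines a fiberwise operator on $L^2(L_x;E)=L^2(L_{p(x)}^0;E)\oplus L^2(\partial L_{p(x)}^0\times[0,\ty);E)$ by zero on the second summand, and the measurability of $\imath B$ follows from that of $B$ together with measurability of $p$.

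For the intertwining condition, if $(x,y)\in\mathcal{R}$ then, by the case distinction $x,y\in X_0$ or $x,y\in Z$ (no leaf meets both unless both points satisfy the corresponding decomposition), one checks that $p(x)$ and $p(y)$ lie on the same leaf of $\mathcal{F}_0$: when both are in $Z$, this uses that the cylinder is foliated by $\mathcal{F}_\partial\times[0,\ty)$ and that $\mathcal{F}_\partial$ equals the trace of $\mathcal{F}_0$ on $\partial X_0$. Therefore $B_{p(x)}=B_{p(y)}$ $\mathcal{R}_0$-almost everywhere, and this transfers directly to the $\mathcal{R}$-a.e. equality $\imath B_x=\imath B_y$, once one checks that the $\Lambda$-null sets pull back correctly under $p$: for any Borel transversal $T\subset\partial X_0$ with $\Lambda(T)=0$, the set $T\times[0,\ty)$ is $\Lambda$-null in $X$ thanks to the disintegration $d\Lambda_g=d\Lambda_\partial\times dr$ recalled in equation (\ref{itegralesplitt}). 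This also gives well-definedness of $\imath$ on the quotient and shows that $\imath$ is a $*$-homomorphism (compositions, adjoints, and identity are preserved pointwise).

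Injectivity is the easiest point: if $\imath B=0$ in $\operatorname{End}_{\mathcal{R},\Lambda}(E)$, then $\imath B_x=0$ holds on a leafwise saturated $\Lambda$-conull set $N\subset X$. Because a conull saturated set intersected with $X_0$ is conull in $X_0$ (the measure $\Lambda$ on $X_0$ is obtained by the same integration recipe), we conclude $B_x=0$ $\Lambda$-a.e.\ on $X_0$, i.e.\ $B=0$ in $\operatorname{End}_{\mathcal{R}_0,\Lambda}(E)$.

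The trace identity is then obtained by choosing a complete Borel transversal $S$ for $\mathcal{R}$ contained entirely in $X_0$, which is possible because the cylinder is a product and every leaf of $\mathcal{R}$ meets $X_0$ in a full leaf of $\mathcal{R}_0$. The local trace of $\imath B_x$ on $L_x$ is the local trace of $B_{p(x)}$ on $L_{p(x)}^0$ viewed inside $L_x$, since $\imath B_x$ kills the cylindrical summand; integrating this longitudinal measure against $\Lambda$ via the transversal $S\subset X_0$ reproduces the integration recipe defining $\operatorname{tr}_{\mathcal{R}_0,\Lambda}(B)$, giving $\operatorname{tr}_{\mathcal{R},\Lambda}(\imath B)=\operatorname{tr}_{\mathcal{R}_0,\Lambda}(B)$. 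The main obstacle is bookkeeping: one must check that the choice of transversal inside $X_0$ yields the same value as an arbitrary complete transversal for $\mathcal{R}$, which follows from the invariance of the pairing between longitudinal and transverse measures under the Ruelle--Sullivan framework, together with the observation that cylindrical transversals contribute zero to $\operatorname{tr}_{\mathcal{R},\Lambda}(\imath B)$ because the local trace of $\imath B_x$ vanishes away from $L_x^0$.
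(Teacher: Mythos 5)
Your proposal is correct and follows essentially the same route as the paper: extension by zero of the leafwise operators, compatibility with $\Lambda$-a.e.\ classes via the product structure of the cylinder and holonomy invariance of the transverse measure, and the trace identity from the observation that the local trace of $\imath B$ is concentrated on $X_0$, so that the integration against $\Lambda$ over $X$ collapses to the integration over $X_0$. The only cosmetic difference is that the paper justifies this last step by invoking Connes' Lemme 8 (the direct-integral description of the trace with respect to the longitudinal Riemannian measure) instead of your choice of a complete transversal inside $X_0$ together with the invariance of the longitudinal/transverse pairing, but the substance of the computation is the same.
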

\begin{proof}The first part is clear. An intertwining operator $B=\{B_x\}_{x\in X_0}$ is zero $\Lambda$--a.e. in $X_0$ then also does $\imath B$ in $X$ for any transversal $T$ contained in the cylinder can slide by holonomy to a transversal contained in $X_0$. About the identity on traces remember the link between the direct integral algebras and the algebras of random operators i.e. Lemme 8 pag 48 in \cite{Cos}. Choose $\nu$ to be the longitudinal Riemannnian metric then $\Lambda_{\nu}$ is the integration of $\nu$ against $\Lambda$. Let $P_0$ be the Von Neumann algebra of $\Lambda_{\nu}$--a.e. classes of measurable fields of operators $X_0\ni x \longmapsto B_x\in B(L^2(L_x^0;E))$ and $P$ the corresponding algebra builded replacing $X_0$ with $X$ and $B(L^2(L_x^0;E))$ with $B(L^2(L_x^0;E))$. Pass to a ultraweak dense ideal of operarators such that the corresponding family $X\ni y\longmapsto \int \imath B_x d\nu^y$ is bounded. Then Lemme 8 pag 48 in \cite{Cos} says that 
$\operatorname{tr}_{\mathcal{R},\Lambda}(\imath B)=\int_{X}\operatorname{Trace}(B_x)d\Lambda_{\nu}(x)=\int_{X_0}\operatorname{Trace}(B_x)d\Lambda_{\nu}(x)=\operatorname{tr}_{\mathcal{R}_0,\Lambda}(B).$
\end{proof}
 \begin{thm}
 Let $\operatorname{Pr}\operatorname{Ker}(D^{\mathcal{F}_0^{\pm}}) \in \operatorname{End}_{\mathcal{R}_0,\Lambda}(E)$ the projection on the Kernel of $D^{\mathcal{F}_0^{\pm}}$ with domain given by the boundary condition $Px=0,\,(\operatorname{I}-P=0)$ as in the formula of Ramachandran.
 Let also $\operatorname{Pr}\operatorname{Ker}_{L^2}(D^{\pm}) \in \operatorname{End}_{\mathcal{R},\Lambda}(E)$ 
 be the projection on the $L^2$--kernel of the leafwise operator on the foliation with the cylinder attached and 
 $\operatorname{Pr}\overline{\operatorname{Ext}(D^{\pm})} \in\operatorname{End}_{\mathcal{R},\Lambda}(e^{u\theta}L^2 E)$ 
 be the projection on the closure of the space of extended solution seen in 
 $e^{u\theta}$ for sufficiently small positive $u$.
 \begin{enumerate}
\item $\imath \operatorname{Pr}\kepp$ is (Murray--Von Neumann $\sim$) equivalent to $\operatorname{Pr}\operatorname{Ker}_{L^2}(D^+)$ in $\operatorname{End}_{\mathcal{R},\Lambda}(E)$ i.e. there exists a partial isometry $u\in \operatorname{End}_{\mathcal{R},\Lambda}(E)$ such that $u^*u=\imath \operatorname{Pr}\kepp$ and $uu^*=\operatorname{Pr}\operatorname{Ker}_{L^2}(D^+).$ In particular
$\operatorname{dim}_{\mathcal{R}_0,\Lambda}\kepp=\operatorname{dim}_{\mathcal{R},\Lambda} \operatorname{Ker}_{L^2}(D^+).$
\item
$\imath \operatorname{Pr}\operatorname{Ker}_{L^2}(D^{\mathcal{F}_0^-})
\sim \operatorname{Pr} \overline{\operatorname{Ext}(D^-)}^{e^{u\theta}L^2},$ 
for sufficiently small $u$ and equivalence in 

\noindent $\operatorname{End}_{\Lambda}(e^{u\theta}L^2(E))$ 
with the inclusion $\imath:\operatorname{End}_{\mathcal{R}_0,\Lambda}(E)\longrightarrow    \operatorname{End}_{\Lambda}(e^{u\theta}L^2(E))  $ defined as in proposition \ref{987}. 
As a consequence $\operatorname{dim}_{\Lambda}\operatorname{Ker}(D^{\mathcal{F}_0^-})=\operatorname{dim}_{\Lambda}\operatorname{Ext}(D^-).$
\end{enumerate}
 \end{thm}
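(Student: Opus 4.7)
The plan is to promote the classical APS leafwise correspondence to a Murray--Von Neumann equivalence between the projections in the random operator algebras, relying on the cylindrical spectral representation of kernels and extended solutions furnished by Proposition \ref{rappacon} (taken in the limit $\epsilon = 0$).

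For statement 1, I would first define, leafwise, a bounded map $F_x \colon \operatorname{Ker}_{L^2}(D^+_x) \to \operatorname{Ker}(D^{\mathcal{F}_0^+}_x)$ by restriction $\xi \mapsto \xi|_{L_x^0}$. By Proposition \ref{rappacon}.1 applied with $\delta' = 0$, any such $\xi$ has cylindrical form $\xi|_{Z_x} = e^{-r D^{\mathcal{F}_\partial}_x} h$ with $h$ in the spectral subspace of $D^{\mathcal{F}_\partial}_x$ selected by the Ramachandran APS projection $B$, so $F_x\xi$ automatically satisfies the Ramachandran boundary condition and lies in $\operatorname{Ker}(D^{\mathcal{F}_0^+}_x)$. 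Conversely, define $E_x$ in the opposite direction by extending $s \in \operatorname{Ker}(D^{\mathcal{F}_0^+}_x)$ through $(E_xs)|_{Z_x} := e^{-r D^{\mathcal{F}_\partial}_x}(s|_{\partial L_x^0})$; the boundary condition on $s$ places the datum in the spectral half-space on which the propagator is contracting, producing an $L^2$ extension. A direct ODE argument on the cylinder shows $F_x E_x = \operatorname{Id}$ and $E_x F_x = \operatorname{Id}$. Both families have uniformly bounded norms, are measurable in $x$, and intertwine the regular representation of $\mathcal{R}$ when sections on $L_x^0$ are viewed in $L^2(L_x)$ via the inclusion $\imath$ of Proposition \ref{987}. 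Composing $\imath \circ F$ with the ambient projections produces a bounded random operator $\tilde{F} \in \operatorname{End}_{\mathcal{R},\Lambda}(E)$ whose initial and final projections are exactly $\operatorname{Pr}\operatorname{Ker}_{L^2}(D^+)$ and $\imath\operatorname{Pr}\operatorname{Ker}(D^{\mathcal{F}_0^+})$. The polar decomposition $\tilde{F} = v|\tilde{F}|$ then lives inside the Von Neumann algebra and implements the desired Murray--Von Neumann equivalence; equality of the $\Lambda$-dimensions then follows from the trace identity in Proposition \ref{987}.

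Statement 2 runs in parallel with extended solutions replacing $L^2$ kernels. An element $\xi \in \operatorname{Ext}(D^-_x) = \bigcap_{u > 0}\operatorname{Ker}_{e^{u\theta}L^2}(D^-_x)$ admits, by the $\delta' \downarrow 0$ limit of Proposition \ref{rappacon}.1, a cylindrical spectral expansion whose support now includes the zero eigenvalue of $D^{\mathcal{F}_\partial}_x$ together with the appropriate spectral half-line matching the Ramachandran boundary condition for $D^{\mathcal{F}_0^-}$. The additional $\lambda = 0$ contribution is a section constant along the cylindrical direction---the classical ``limiting value'' of APS---absorbed by the weight $e^{u\theta}$ for any $u > 0$ but not by the plain $L^2$ norm, accounting precisely for the gap $h^{-}_\Lambda = \operatorname{dim}_\Lambda\operatorname{Ext}(D^-) - \operatorname{dim}_\Lambda\operatorname{Ker}_{L^2}(D^-)$. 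Restriction and cylindrical extension again produce inverse random operators between $\operatorname{Ker}(D^{\mathcal{F}_0^-}_x)$ and $\overline{\operatorname{Ext}(D^-_x)}^{e^{u\theta}L^2_x}$, and the same polar-decomposition argument yields the Murray--Von Neumann equivalence in $\operatorname{End}_\Lambda(e^{u\theta}L^2(E))$; independence from the choice of small $u > 0$ is guaranteed by Definition \ref{170} and the preceding discussion.

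The main obstacle is the careful verification at the Von Neumann level: measurability and $\mathcal{R}$-equivariance of $x \mapsto F_x, E_x$ (which rests on the measurable form of the Browder--G{\aa}rding decomposition used in Section 5), uniformity of the operator norms along leaves so that $F$ and $E$ extend to bounded random operators between the fields of Hilbert spaces, and the check that polar decomposition stays inside the appropriate algebra. For statement 2 there is the additional subtlety of showing that $E_x$ maps boundedly into $e^{u\theta}L^2(L_x)$ uniformly in $x$, which follows from the decay of $e^{-2u\theta}$ on $Z_x$ controlling the contribution of the constant limiting value. Once both equivalences are established, compatibility between formula \eqref{232} and formula \eqref{2111} follows from the foliated analogue of the APS identity \eqref{hsp}, namely $h = h^+_\Lambda + h^-_\Lambda$, combined with the equality $\eta_\Lambda(D^{\mathcal{F}_0}) = \eta_\Lambda(D^{\mathcal{F}_\partial})$ of the eta contributions.
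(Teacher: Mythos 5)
Your proposal is correct and follows essentially the same route as the paper: the leafwise correspondence between the APS boundary-value kernels and the $L^2$ (resp.\ extended) solutions is set up via the Browder--G{\aa}rding cylindrical spectral representation, the resulting measurable field of operators is promoted to a random operator, and the Murray--von Neumann equivalence is extracted by polar decomposition inside $\operatorname{End}_{\mathcal{R},\Lambda}$, with the trace-preserving inclusion $\imath$ of Proposition \ref{987} giving equality of $\Lambda$--dimensions. The only cosmetic difference is that you construct both the restriction and the extension maps and check they are mutually inverse, whereas the paper only builds the extension field $T_x$, extends it by zero and by $T_{p(x)}$ on the cylinder, and reads off the two projections from $u^*u$ and $uu^*$ after polar decomposition.
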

 \begin{proof}The idea is contained in A.P.S.
  \cite{AtPaSi1} when they prove the equivalence between the boundary value problem and the $L^2$ cylindrical problem. Their main instrument is the eigenfunction expansion of the operator at the boundary; now we use the Browder--Garding generalized expansion to see that any solution of the boundary value problems extends to a solution of the operator on the cylinder. 

 {\bf{1.}} Use the Browder--G{\aa}rding expansion as in the proof of the finiteness of the projection on the kernel \ref{2344}. For a single leaf, the isomorphism
 $$L^2(\partial L_x^0\times (-1,0])\longrightarrow \bigoplus_{j\in \mathbb{N}}L^2(\R,\mu_j)\otimes L^2((-1,0]) $$ represents a solution of the boundary value problem as $h_{j}(r,\lambda)=\chi_{(-\ty,0)}(\lambda)e^{-\lambda r}h_{j0}(r)$ hence the solution can be extended to the cylinder of the leaf $\partial L_x^0\times (0,\ty)$. This clearly gives a field of linear isomorphisms $T_x:\operatorname{Ker}(D_x^{\mathcal{F}_0^+})\longrightarrow \operatorname{Ker}_{L^2}(D_x^+)$ for $x\in X_0$. First extend $T_x$ to all $L^2(L_x^0;E)$ to be zero on $ \operatorname{Ker}(D^{\mathcal{F}_0^+})^{\bot}$ then let $x$ take values also in $X$ according to the method explained before i.e. put $T_x:=T_{p(x)}$ for $x$ in the cylinder. Take the polar decomposition $T_x=u_x|T_x|$, then $u_x$ is a partial isometry with initial space $\operatorname{Ker}(D_x^{\mathcal{F}_0^+})$ and range $\operatorname{Ker}(D_x^+),$ i.e 
 $$u_x^*u_x=\operatorname{Pr} \operatorname{Ker}(D_x^{\mathcal{F}_0^+}),\quad u_xu_x^*=\operatorname{Pr} \operatorname{Ker}(D_x^+).$$
 We have to look at this relation into the Von Neumann algebra of the foliation on $X$. Split every $L^2$ space of the leaves as $L^2(L_{p(x)}^0;E)\oplus L^2(\partial L_{p(x)}^0\times (0,\ty);E)$. With respect to the splitting, forgetting the indexes $x$ downstairs, we have $u=\left(\begin{array}{cc}u_{11} & 0 \\u_{21} & 0\end{array}\right)$ acting on the field of $L^2(X;E)$ spaces of the leaves. Then $u^*=\left(\begin{array}{cc}u_{11}^* & u_{21}^* \\0 & 0\end{array}\right)$ with conditions $u_{11}u_{21}^*=0$ and $u_{21}u_{11}^*=0.$
 Finally $uu^*=\left(\begin{array}{cc}u_{11}u_{11}^*+ u_{21}u_{21}^*& 0 \\0 & 0\end{array}\right)
 \left(\begin{array}{cc}\operatorname{Pr}(D^{\mathcal{F}_0^+}) & 0 \\0 & 0\end{array}\right)=\imath \operatorname{Pr}(D^{\mathcal{F}_0^+})$ and similarly $u^*u=\operatorname{Pr}(D^+).$
 
 {\bf{2.}} It is very similar to statement 1. in fact writing the Browder--G{\aa}rding expansion and imposing the adjoint boundary condition one ends directly into the space of the extended solutions.
\end{proof}

\noindent To conclude now we can compare Ramachandran index with ours; let's compare formula \eqref{232} with \eqref{2111} keeping in mind that, the index of Ramachandran is now our extended index (see section \ref{aaps} )
 $\operatorname{ind}_{\Lambda}(D^{\mathcal{F}_0})={\operatorname{ind}_{\Lambda,L^2}(D^+)}=\operatorname{dim}_{\Lambda} \operatorname{Ker}_{L^2}(D^+)-\operatorname{Ker}_{L^2}(D^-)$ to obtain the equation
 $\operatorname{dim}_{\Lambda} \operatorname{Ext}(D^-)-\operatorname{dim}_{\Lambda} \operatorname{Ker}_{L^2}(D^-)=(h^{-}_{\Lambda}-h^{+}_{\Lambda})/2+h/2.$
The same argument applied to the (formal) adjoint of $D^+$ leads to the equation

\noindent $\operatorname{dim}_{\Lambda} \operatorname{Ext}(D^+)-\operatorname{dim}_{\Lambda} \operatorname{Ker}_{L^2}(D^+)=(h^{+}_{\Lambda}-h^{-}_{\Lambda})/2+h/2,$ then 
$$h=h^{+}_{\Lambda}+h^{-}_{\Lambda}$$ exactly as in the Atiyah Patodi Singer paper.

\small

\end{document}